\documentclass[reqno]{amsart}
\usepackage{blindtext}
\usepackage{fullpage}
\usepackage{mathtools}
\usepackage{longtable}
\usepackage{amsmath,amssymb,amsthm}
\usepackage{amscd}
\usepackage{bm}
\usepackage{hyperref}   
\usepackage{dsfont}
\usepackage{enumerate}
\usepackage{epsfig}
\usepackage{float, graphicx}
\usepackage{latexsym, amsxtra}
\usepackage{mathrsfs}
\usepackage{multicol}
\usepackage[normalem]{ulem}
\usepackage{psfrag}
\usepackage[parfill]{parskip}
\usepackage{stmaryrd}
\usepackage{tikz}
\usepackage[T1]{fontenc}
\usepackage{url}
\usepackage{verbatim}
\usepackage{indentfirst}
\usepackage{tikz-cd}
\usepackage{booktabs}
\usepackage{svg}

\usepackage{mathtools}

\usepackage{caption} 

\makeatletter
\def\thm@space@setup{%
	\thm@preskip=2ex \thm@postskip=2ex
}
\makeatother

\hypersetup{hidelinks}

\newtheorem{thm}{Theorem~}[section]
\newtheorem{lem}[thm]{Lemma~}

\newtheorem{prop}[thm]{Proposition~}
\newtheorem{ques}[thm]{Question~}
\newtheorem{cor}[thm]{Corollary~}

\theoremstyle{remark}
\newtheorem{rmk}[thm]{Remark~}

\theoremstyle{definition}  

\newcommand{\calF}{\mathcal{F}}
\newcommand{\calA}{\mathcal{A}}
\newcommand{\calG}{\mathcal{G}}
\newcommand{\calB}{\mathcal{B}}
\newcommand{\calI}{\mathcal{I}}

\newcommand{\CC}{\mathbb{C}}
\newcommand{\ZZ}{\mathbb{Z}}

\newcommand{\LL}{\mathbb{L}}
\newcommand{\PP}{\mathbb{P}}

\newcommand{\QQ}{\mathbb{Q}}

\newcommand{\HH}{\mathbb{H}}
\newcommand{\NN}{\mathbb{N}}

\newcommand{\V}{\mathcal{V}}

\newcommand\Span{\mathrm{Span}}

\newcommand\codim{\mathrm{codim}}

\DeclareMathOperator{\IF}{IF}
\DeclareMathOperator{\RF}{RF}
\DeclareMathOperator{\re}{Re}

\DeclareMathOperator{\Supp}{Supp}
\DeclareMathOperator{\Res}{Res}
\DeclareMathOperator{\mult}{mult}

\title{Combinatorial Nonresonance Theorems for Hyperplane Arrangement Complements}

\author[B. Xie]{Baiting Xie}
\address{Qiuzhen College, Tsinghua University, China}
\email{xbt23@mails.tsinghua.edu.cn}

\date{}

\begin{document}	

	\begin{abstract}
		We study the nonresonance phenomenon for complex rank‑one local systems on complements of hyperplane arrangements. We refine the method of Cohen, Dimca, and Orlik and obtain a combinatorial sufficient condition for nonresonance. As an application, we strengthen a theorem of Bailet, Dimca, and Yoshinaga by removing one of its conditions. We also develop restriction and lifting techniques to prove a nonresonance theorem for line arrangements.
	\end{abstract}
	
	\maketitle
    
	\section{Introduction}\label{section: introduction}

	 A classical problem in the theory of hyperplane arrangements is to determine which topological invariants of their complements are governed by their intersection combinatorics. For example, Orlik and Solomon \cite{MR558866} proved that the integral cohomology ring is combinatorially determined. However, Rybnikov's counter-example \cite{rybnikov2011fundamental} (see also \cite{MR2188450}) implies that the fundamental group is not. An important intermediate case between them is the cohomology with coefficients in a complex rank-one local system. This is also related to the question of whether the $ \CC $-cohomology groups of the associated Milnor fibers of the arrangements are combinatorially determined \cite{MR1310725}. See \cite{MR3661343} for a recent progress in this direction.
	 
	 In studying such cohomology groups, the phenomenon of nonresonance, namely the vanishing of cohomology outside the middle dimension, has received particular attention. A natural question is whether this property is determined by the combinatorial data of the arrangement together with that of the local system.

	 More explicitly, let $ \calA $ be a hyperplane arrangement in the complex projective space $ \PP^{n} $, with its complement denoted by $ M(\calA) $. Let $ \LL $ be a complex rank-one local system on $ M(\calA) $. The pair $ (\calA,\LL) $ is said to be \textbf{nonresonant} if  $ H^{p}(M(\calA),\LL) = 0 $ for all $ p \neq n $. The combinatorial data of $ \calA $ are encoded in its intersection lattice $ L(\calA) $. And a complex rank-one local system on $ M(\calA) $ is determined up to isomorphism by its counter-clockwise monodromy around the hyperplanes, yielding the following one‑to‑one correspondence:
	 \begin{equation*}
	 	\left\{
	 	\begin{tabular}{c}
	 		isomorphism classes of complex \\
	 		rank-one local systems $ \LL $ on $ M(\calA) $
	 	\end{tabular}
	 	\right\}
	 	\quad\stackrel{1:1}{\longleftrightarrow}\quad
	 	\left\{
	 	\begin{tabular}{c}
	 		$ m \colon \calA \rightarrow \CC^{\times} $ such that \\
	 		$ \prod\limits_{H \in \calA} m(H) = 1 $
	 	\end{tabular}
	 	\right\}.
	 \end{equation*}
	 Then the question raised in the previous paragraph can be stated concretely as:
	 \begin{ques}
	 	Given $ \calA $ and $ \LL $,
	 	do $ L(\calA) $ and $ m $ together determine whether the pair $ (\calA,\LL) $ is nonresonant?
	 \end{ques}
	 
	  A series of combinatorial sufficient conditions for nonresonance have been obtained by several authors, including Esnault-Schectman-Viehweg \cite{esnault1992cohomology}, Kohno \cite{MR846350}, and Schechtman-Terao-Varchenko \cite{schechtman1995local}. The most general known condition is due to Cohen, Dimca, and Orlik \cite{MR2038782} and is formulated in terms of resonant flats. Their precise definition, which depends only on $ L(\calA) $ and $ m $, is given in \S\ref{section: preliminary}. In the rank‑one case, their condition takes the following form:
	 \begin{thm}[{\cite[Theorem 1]{MR2038782}}]\label{thm: CDO condition for complex rank one}
	 	 Let $ \calA $ be a hyperplane arrangement in $ \PP^{n} $ and $ \LL $ a complex rank-one local system on the complement $ M(\calA) $.  Denote by $ \RF(\calA,\LL) $ the set of resonant flats of $ \calA $ with respect to $ \LL $. If there is a hyperplane $ H \in \calA $ such that $ H \notin \calF $ for any $ \calF \in \RF(\calA,\LL) $, then the pair $ (\calA,\LL) $ is nonresonant.
	 \end{thm}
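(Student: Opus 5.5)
The strategy is the Cohen--Dimca--Orlik one: choose $H$ as the hyperplane at infinity and pass to a good compactification. Since $H$ lies in no resonant flat, we take $H$ as the hyperplane at infinity. Then $M(\calA)$ becomes the complement $U$ of an affine arrangement $\calA' = \calA \setminus \{H\}$ in $\CC^{n} = \PP^{n}\setminus H$, and $\LL$ corresponds to a local system on $U$ with prescribed monodromies around the affine hyperplanes.

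Next take the wonderful (De Concini--Procesi / blow-up) compactification $\pi\colon X \to \PP^{n}$, obtained by blowing up the dense edges of $\calA$ in order of increasing dimension. On $X$ the total transform $D = \pi^{-1}(\bigcup_{K\in\calA} K)$ is a simple normal crossing divisor. Its components $D_{\calF}$ are indexed by the dense flats $\calF$ of $L(\calA)$, and this includes the hyperplanes themselves. The monodromy of $\LL$ around $D_{\calF}$ is $\prod_{K \in \calF} m(K)$. The resonant flats are exactly the dense flats for which this product equals $1$; I take this as the definition recalled in \S\ref{section: preliminary}.

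We then split $D = D_{\mathrm{res}} + D_{\mathrm{nr}}$ into components with trivial and nontrivial monodromy, and let $j\colon U \hookrightarrow X \setminus D_{\mathrm{res}}$. Because the monodromy is trivial along $D_{\mathrm{res}}$, the local system $\LL$ extends across $D_{\mathrm{res}}$ to a local system $\tilde\LL$ on $X' = X \setminus D_{\mathrm{nr}}$. Local nontriviality along $D_{\mathrm{nr}}$ gives $Rj_{*}\LL = j_{!}\LL$ on that part; this is the standard normal-crossing computation, a Künneth argument with $H^{*}(\CC^{\times}, \text{nontrivial}) = 0$. So, up to handling the resonant divisors, $H^{*}(M,\LL)$ equals the compactly supported cohomology relative to $D_{\mathrm{nr}}$. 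Poincaré--Verdier duality then exchanges $H^{p}$ and $H_c^{2n-p}$ for $\LL^{\vee}$.

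The key point uses the hypothesis. Every component through which $\pi^{-1}(H)$ passes is nonresonant, since $H$ lies in no resonant flat. Hence the divisor $D_{\mathrm{nr}}$ contains the strict transform of $H$ together with all exceptional divisors over dense flats containing $H$. The complement $X \setminus D_{\mathrm{nr}}$ is then affine-like, or at least $D_{\mathrm{nr}}$ supports an ample-type divisor. Concretely, $\pi^{*}H$ minus a small combination of exceptional divisors over flats containing $H$ is ample on $X$, and its support is exactly these nonresonant components. Then $X\setminus D_{\mathrm{nr}}$ minus the (resonant) $D_{\mathrm{res}}$ is handled as follows. By Artin vanishing on the affine variety $X \setminus \Supp(\text{ample})$, the sheaf $Rj_{*}\LL \cong j_!\LL$ along the ample part gives $H^{p}=0$ for $p>n$. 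The dual statement gives $H^{p}=0$ for $p<n$.

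The main obstacle is showing that the resonant components $D_{\mathrm{res}}$, which are not removed, do not spoil this. Since $\tilde\LL$ extends across them, $U$ is obtained from an affine variety by deleting a normal crossing divisor where the local system extends. What we need is the perverse-sheaf statement that $Rj_{*}\LL[n]$ is perverse and is simultaneously $j_!$ along the ample support, so that Artin vanishing on the affine complement $X\setminus \Supp(A)$ applies to the perverse sheaf $R\iota_* \LL[n]$, with $\iota\colon M \to X\setminus \Supp(A)$ affine. This yields vanishing in both directions.

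To carry this out, I would first verify the ampleness claim directly: the support of $\pi^{*}H - \sum \epsilon_{\calF} D_{\calF}$, over dense $\calF \ni H$, is contained in the nonresonant part. I would then verify that the inclusion of $M$ into the affine complement is affine. That holds because it is the complement of a Cartier divisor, so $R\iota_{*}$ is t-exact. Together with $j_! = Rj_*$ along the ample part, this gives both bounds.
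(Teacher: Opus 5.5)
Your argument fails at the ampleness claim. The affineness of $X\setminus\Supp(A)$, which your final Artin-vanishing step needs, fails with it. On the wonderful model $X$, the divisor $\pi^{*}H-\sum_{\calF\ni H}\epsilon_{\calF}D_{\calF}$ has degree $0$ on every curve $C$ that $\pi$ contracts to a point outside $H$. Indeed $\pi^{*}H\cdot C=0$, and $C$ is disjoint from every $D_{\calF}\subseteq\pi^{-1}(H)$. Such curves exist as soon as some irreducible flat of rank $\geq 2$ does not contain $H$, for example the exceptional curve over a triple point $p\notin H$ of a line arrangement. So $X\setminus\pi^{-1}(H)$, the iterated blow-up of $\CC^{n}=\PP^{n}\setminus H$ along the affine dense edges, is not affine.

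No other divisor on $X$ rescues this. Take lines $H,H',K_{1},K_{2},K_{3}$ in $\PP^{2}$ with $K_{1},K_{2},K_{3}$ concurrent at $p\notin H\cup H'$ and all other intersections double, and set $m(H)=t\neq 1$, $m(H')=t^{-1}$, $m(K_{i})=1$. Then $H$ lies in no resonant flat, yet the exceptional curve over $p$ meets only resonant divisors (the strict transforms of the $K_{i}$). It is therefore a complete curve inside $X\setminus D_{\mathrm{nr}}$, and no ample divisor on $X$ is supported in $D_{\mathrm{nr}}$.

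The rest of your framework is sound. The repair is to apply Artin vanishing downstairs on the genuinely affine $\CC^{n}=\PP^{n}\setminus H$, and to use $X$ only to compute stalks. Factor $j\colon M(\calA)\to\PP^{n}$ as $k\circ\iota$ with $\iota\colon M(\calA)\to\CC^{n}$ and $k\colon\CC^{n}\to\PP^{n}$.

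For $x\in H$, proper base change gives $(Rj_{*}\LL)_{x}\cong R\Gamma(\pi^{-1}(x),R(j_{X})_{*}\LL)$. Since $\pi^{-1}(x)\subseteq\bigcup_{\calF\ni H}D_{\calF}$, and all stalks of $R(j_{X})_{*}\LL$ vanish there by your K\"unneth computation (each such $\calF$ has $m(\calF)\neq 1$), this stalk is zero. Hence $Rj_{*}\LL\cong k_{!}R\iota_{*}\LL$, and so $H^{p}(M(\calA),\LL)\cong H^{p}_{c}(\CC^{n},R\iota_{*}\LL)$. As $\iota$ is affine, $R\iota_{*}\LL[n]$ is perverse on $\CC^{n}$, and Artin vanishing with compact supports kills $H^{p}$ for $p<n$.

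In this form the resonant components need no separate treatment: they lie over $\CC^{n}$ and are absorbed into the perverse sheaf. This is essentially the Cohen--Dimca--Orlik argument. The paper argues differently, via Corollary~\ref{cor: n-vanishing for R(H)}. It proves Esnault--Viehweg coherent vanishing for the twisted logarithmic extension $\V(\calA,R(H))$ (Lemma~\ref{lem: vanishing lemma}). It then transfers this to $H^{p}(M(\calA),\LL)$ by Deligne's comparison, which applies because the hypothesis forces $R(H)\cap\RF(\calA,\LL)=\emptyset$.
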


	 When $ \calA $ is a line arrangement in $ \PP^{2} $, its intersection lattice is determined by the set of intersection points of the lines. Parallel to the notion of resonant flats, we call an intersection point $ p $ of $ \calA $ \textbf{resonant} (with respect to $ \LL $) if its multiplicity $ \mult(p) \geq 3 $ and $ \prod_{H \ni p}m(H) = 1 $. Then the condition in Theorem \ref{thm: CDO condition for complex rank one} simply says that there exists a line $ H \in \calA $ with $ m(H) \neq 1 $ containing no resonant points. Bailet, Dimca and Yoshinaga \cite{MR3858415} further studied the situation where there is a line containing exactly one resonant point and proved the following theorem.
	 
	 \begin{thm}[{\cite[Proposition 4.3]{MR3858415}}]\label{thm: BDY condition}
	 		 Let $ \calA $ be a line arrangement in $ \PP^{2} $ such that the lines in $ \calA $ do not all pass through a common point. Let $ \LL $ be a complex rank-one local system on the complement $ M(\calA) $ with monodromy map $ m \colon \calA \rightarrow \CC^{\times} $ such that $ m(H) \neq 1 $ for all lines $ H $ in $ \calA $. Suppose that
	 		 \begin{enumerate}
	 		 	\item there exists a line in $ \calA $  containing a unique resonant point, say $ p $, and
	 		 	\item no line through 
	 		 	$ p $ meets every line in $ \calA $ at a resonant point of $ \calA $.
	 		 \end{enumerate}
	 Then the pair $ (\calA,\LL) $ is nonresonant.
	 \end{thm}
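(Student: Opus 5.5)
The plan is to reduce the vanishing to a statement about $H^{1}$ and then run a localized version of the Cohen--Dimca--Orlik argument behind Theorem~\ref{thm: CDO condition for complex rank one}. The only place where that argument breaks down along the chosen line is the resonant point $p$, and that is where condition (2) has to be used.

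\textbf{Reduction to $H^{1}$.} Since $m(H)\neq 1$ for every $H$, the local system $\LL$ is nontrivial, so $H^{0}(M(\calA),\LL)=0$. Since $M(\calA)$ is a smooth affine surface, Artin vanishing gives $H^{p}(M(\calA),\LL)=0$ for $p>2$. It therefore suffices to prove $H^{1}(M(\calA),\LL)=0$.

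\textbf{Setup along $H_0$.} Let $H_0$ be the line in condition (1), so that $p$ is the only resonant point on $H_0$. Set $U=\PP^{2}\setminus\bigcup_{H\neq H_0}H$, which is affine, and let $j\colon M(\calA)\hookrightarrow U$ be the inclusion. Along $H_0\cap U$ the local system has monodromy $m(H_0)\neq 1$, so $Rj_*\LL=j_!\LL$ there.

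Next, let $\pi\colon X\to\PP^{2}$ be the blow-up at $p$, and work with the divisor $D$ formed by the strict transforms of the lines together with the exceptional curve $E$. At every point of $H_0$ other than $p$, the stalk comparison between $j_!$ and $Rj_*$ (for the full inclusion into the compactification) is an isomorphism, by the non-resonance of those points. This is exactly the local computation of Cohen--Dimca--Orlik, which needs only $m(H_0)\neq1$ and $\prod_{H\ni q}m(H)\neq 1$ at every multiple point $q\in H_0$ with $q\neq p$.

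\textbf{Localizing the failure at $p$.} Carrying out the CDO argument along $H_0$ therefore yields the vanishing of the relevant compactly supported cohomology, and hence of $H^{1}(M(\calA),\LL)$, up to a single error term supported at $p$. Concretely, I expect an exact sequence
\[
H^{0}\bigl(E^{\circ},\LL|_{E^{\circ}}\bigr)\longrightarrow H^{1}(M(\calA),\LL)\longrightarrow 0,
\]
where $E^{\circ}$ is $E$ minus the points corresponding to the lines of $\calA$ through $p$. The monodromy of $\LL$ around $E$ is $\prod_{H\ni p}m(H)=1$, so $\LL$ extends across $E$ generically. The restriction $\LL|_{E^{\circ}}$ has monodromy $m(H)$ around the point of $E$ corresponding to each line $H$ through $p$. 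Since $m(H)\neq1$, this restriction is a nontrivial local system on a punctured $\PP^{1}$ with at least three punctures. It follows that $H^{0}(E^{\circ},\LL|_{E^{\circ}})=0$ and that $H^{1}(E^{\circ},\LL|_{E^{\circ}})$ is the only possible contribution. I will redo the bookkeeping carefully, since the error term may well sit in $H^{1}$ of $E^{\circ}$, or in the cohomology of a neighbourhood of $E$, rather than in $H^0$.

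\textbf{Main obstacle: killing the error term.} The hard part is showing that the connecting map from the error term into $H^{1}(M(\calA),\LL)$ is zero, or that its image dies. I would try the pencil of lines through $p$ first.

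A point of $E$ corresponds to a line $\ell$ through $p$. Suppose a class in $H^{1}(M(\calA),\LL)$ comes from the error term. I would show it is detected by restriction to $\ell\cap M(\calA)$ for a suitable line $\ell\ni p$. The restriction of $\LL$ to $\ell\cap M(\calA)$ has monodromy $m(H)$ at the point $\ell\cap H$ when that point is a double point. At a point where $\ell$ meets $\calA$ in a triple or higher point, the monodromy is the corresponding product.

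Condition (2) provides, for every line $\ell$ through $p$, some $H\in\calA$ meeting $\ell$ at a non-resonant point. On such an $\ell$, the local system $\ell\cap M(\calA)$ has a puncture where the Deligne-extension comparison is again an isomorphism. This lets me apply the rank-one nonresonance for $\PP^{1}$ (the one-dimensional case of Theorem~\ref{thm: CDO condition for complex rank one}) fibrewise, and so conclude that the error class vanishes.

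Making this fibrewise argument rigorous means controlling the special fibres of the projection from $p$, namely the lines of $\calA$ through $p$ and the lines through other multiple points. I expect that step to require the most care, and I would handle it via the Leray spectral sequence for the pencil $X\to\PP^{1}$.
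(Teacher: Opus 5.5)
Your proposal has a genuine gap. It sits at the step you flag as the main obstacle, and the bookkeeping before it points the wrong way.

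\textbf{The error term.} Set $i\colon H_{0}\hookrightarrow\PP^{2}$, $k\colon \PP^{2}\setminus H_{0}\hookrightarrow\PP^{2}$, $j_{0}\colon M(\calA)\hookrightarrow\PP^{2}\setminus H_{0}$ and $j=k\circ j_{0}$. Carrying out the localized Cohen--Dimca--Orlik argument correctly, the complex $i^{*}Rj_{*}\LL$ is supported only at $p$, with stalk $R\Gamma(B_{p}\cap M(\calA),\LL)$ for a small ball $B_{p}$. Artin vanishing gives $H^{q}_{c}(\PP^{2}\setminus H_{0},Rj_{0*}\LL)=0$ for $q\le 1$. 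The triangle $k_{!}Rj_{0*}\LL\to Rj_{*}\LL\to i_{*}i^{*}Rj_{*}\LL$ then yields an \emph{injection}
\[
H^{1}(M(\calA),\LL)\hookrightarrow H^{1}(B_{p}\cap M(\calA),\LL)\cong H^{1}(E^{\circ},\LL|_{E^{\circ}}),
\]
whose target has dimension $\mult(p)-2\ge 1$. The same conclusion follows from Lemma~\ref{lem: long exact sequences} applied to the $2$-vanishing pair of Corollary~\ref{cor: n-vanishing for R(H)}, with $\calF$ the flat of $p$. So the error term is never zero, and what must be proved is that this restriction map vanishes.

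Your guessed sequence $H^{0}(E^{\circ},\LL|_{E^{\circ}})\to H^{1}(M(\calA),\LL)\to 0$ cannot hold. Combined with $H^{0}(E^{\circ},\LL|_{E^{\circ}})=0$, it would give $H^{1}=0$ from data along $H_{0}$ alone. That would cover a pencil of $k\ge 3$ concurrent lines with $\prod m(H)=1$ and all $m(H)\ne 1$, where $H^{1}(M(\calA),\LL)\cong H^{1}(\PP^{1}\setminus\{k\text{ points}\},\LL)\ne 0$. So some global hypothesis (non-concurrency, or (2)) has to enter, and your sketch never uses non-concurrency.

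\textbf{The fibrewise step.} It does not supply this input. Nonresonance for $\ell\cap M(\calA)\subset\ell\cong\PP^{1}$ only says $H^{0}(\ell\cap M(\calA),\LL)=0$. It says nothing about $H^{1}(\ell\cap M(\calA),\LL)$, which is generally nonzero. Restricting an $H^{1}$ class to lines through $p$ and invoking nonresonance on each line therefore cannot make it vanish.

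Moreover, $E^{\circ}$ is identified with the base of the pencil and is a section of the projection $f$ from $p$ on the blow-up. Each fibre meets it in a single point, so no fibre-by-fibre statement detects the image in $H^{1}(E^{\circ},\LL|_{E^{\circ}})$. What is missing is a genuinely global argument: control of $H^{0}(E^{\circ},R^{1}f_{*}\LL)$ and of its map to $H^{1}(E^{\circ},\LL|_{E^{\circ}})$, that is, of the global monodromy of the family. You name the Leray spectral sequence but do not carry it out.

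Condition (2) cannot do this work fibrewise either. It holds automatically for a generic line through $p$ as soon as some line of $\calA$ misses $p$. Likewise, the hypothesis on $H_{0}$ only enters at a deleted point of the base, since $H_{0}\ni p$, and your sketch never says how it is used.

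\textbf{The paper's route.} The paper avoids pencil analysis entirely. It deduces this statement from Theorem~\ref{main: one point}, since non-concurrency supplies a line $H_{2}\not\ni p$ with $m(H_{2})\ne 1$. That theorem is proved in two steps. First, adjoin the lines joining $p$ to every rank-$2$ flat off $H_{1}$; these have trivial monodromy, and $H^{1}$ stays injective by Lemma~\ref{lem: add a hyperplane}. Second, apply the combinatorial criterion of Theorem~\ref{main: n-vanishing}. After the first step, every resonant flat through $H_{2}$ shares a line with the flat of $p$. This forces any $\lambda$ with constant line sums to vanish identically. In particular, condition (2) turns out to be unnecessary.
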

	 It is natural to ask whether condition (2) can be dropped. This has been verified when $ \calA $ is a complexified real arrangements (see \cite{xie2025homology}, \cite{MR3090727} and \cite{MR3255765}).

	In this paper, we refine the method of Cohen, Dimca, and Orlik and obtain a strictly weaker combinatorial sufficient condition for nonresonance than the one given by Theorem \ref{thm: CDO condition for complex rank one}.
	\begin{thm}\label{main: n-vanishing}
				 Let $ \calA $ be a nonempty hyperplane arrangement in $ \PP^{n} $ and $ \LL $ a complex rank-one local system on the complement $ M(\calA) $.  Denote by $ \RF(\calA,\LL) $ the set of resonant flats of $ \calA $ with respect to $ \LL $. If for any non-identically-zero map $ \lambda \colon \RF(\calA,\LL) \rightarrow \NN $, the map 
				 \begin{equation*}
				 	\calA \longrightarrow \NN,\ H \mapsto 	\sum\limits_{\calF \ni H}\lambda(\calF)
				 \end{equation*}
				 is non-constant, then the pair $ (\calA,\LL) $ is nonresonant.
	\end{thm}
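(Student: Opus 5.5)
The strategy is the Cohen--Dimca--Orlik one: realise $H^\ast(M(\calA),\LL)$ as the hypercohomology of a logarithmic de Rham complex on a good compactification, and show that this complex is quasi-isomorphic to a complex twisted by a divisor for which Kawamata--Viehweg vanishing applies. The new ingredient is that the divisor need not be supported only away from one distinguished hyperplane. Instead it is built from weights on resonant flats, so that the combinatorial hypothesis of Theorem \ref{main: n-vanishing} is exactly what makes the relevant $\QQ$-divisor big and nef modulo the exceptional part.

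The first step is to set up the resolution. Take $\pi\colon X\to\PP^{n}$, the iterated blow-up of $\PP^n$ along the (strict transforms of the) dense edges, which in the rank-one case can be taken to be the resonant flats and everything needed to make the total transform a normal crossing divisor. Its boundary is
\[
D=\sum_{H\in\calA}\tilde H+\sum_{\calF}E_{\calF}.
\]
Choose logarithms $\alpha_H\in\CC$ with $e^{2\pi i\alpha_H}=m(H)$ and $\sum_H\alpha_H=0$. The residue of the induced log connection along $E_\calF$ is then $\alpha_\calF=\sum_{H\in\calF}\alpha_H$. The set $\RF(\calA,\LL)$ is exactly the set of flats where this residue can lie in $\ZZ_{\ge 0}$ for every choice, so the resonance is forced there.

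By Deligne's comparison together with the Esnault--Schechtman--Viehweg argument, the cohomology $H^\ast(M,\LL)$ equals the hypercohomology of $(\Omega_X^\bullet(\log D)\otimes\mathcal{O}_X(-B),\nabla)$, where $B$ is any integral divisor supported on $D$ chosen so that no residue becomes a positive integer. This is the key flexibility. Instead of the CDO choice, I would take $\alpha_H$ shifted by integers, and let $B$ carry integer coefficients $\lambda(\calF)$ on the resonant exceptional divisors.

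The second step is the vanishing argument. The goal is to show $H^q(X,\Omega^p_X(\log D)\otimes\mathcal{O}(-B))=0$ for $p+q<n$; Poincaré/Verdier duality then handles $p+q>n$ by applying the same statement to $\LL^\vee$, whose resonant flats are the same. By the Esnault--Viehweg log version of Kawamata--Viehweg/Akizuki--Nakano, this vanishing holds as soon as $-B\equiv -\pi^\ast\mathcal{O}(k)+\text{(effective exceptional with coefficients in }(0,1))$ is anti-ample modulo a fractional boundary. Concretely, one needs a $\QQ$-divisor
\[
\pi^\ast(\text{hyperplane class})-\epsilon\sum_{\calF}c_\calF E_\calF
\]
to be ample, which holds for small $\epsilon$ and suitable $c_\calF>0$.

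The third step, which I expect to be the main obstacle, is arranging the integer shifts to be compatible with the constraint $\sum_H\alpha_H=0$. Shifting $\alpha_H\mapsto\alpha_H+n_H$ with $n_H\in\ZZ$ requires $\sum n_H=0$. The requirement that every resonant $E_\calF$ gets a residue that is not a positive integer means that $\sum_{H\in\calF}n_H$ must be suitably negative on resonant flats, while the hyperplane components must avoid positive integers as well. I would dualise this via Farkas' lemma (or integer linear programming duality over $\QQ$ followed by clearing denominators): the system is infeasible exactly when there is a nonzero $\lambda\colon\RF\to\NN$ and a constant $c$ with $\sum_{\calF\ni H}\lambda(\calF)=c$ for all $H$. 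Indeed, the dual multiplier for the equality $\sum n_H=0$ is a free scalar, and the multipliers for the per-$H$ inequalities vanish after normalisation. The hypothesis of Theorem \ref{main: n-vanishing} rules this out, so a feasible rational solution exists, and it scales to an integral one.

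One must check carefully that the strict/non-strict inequalities line up, and that the hyperplane components themselves, where the residue is $\alpha_H+n_H$, cause no trouble when $m(H)=1$. For this I would treat them as flats of rank one, which presumably belong to $\RF$ precisely when $m(H)=1$. Once this linear-algebra step is in place, the argument recovers Theorem \ref{thm: CDO condition for complex rank one} as the special case where a hyperplane lies in no resonant flat, since then the sum vanishes at that $H$ but not identically.
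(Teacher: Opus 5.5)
You have the duality step right, but there is a genuine gap: nothing connects that inequality system to the vanishing theorem.

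\textbf{What is correct.} The alternative you state is Gordan's theorem. It is exactly the paper's Lemma~\ref{lem: non-negative linear combination}, applied to $W=\{\delta\colon\calA\to\QQ \mid \sum_{H}\delta(H)=0\}$ and the functionals $\delta\mapsto\sum_{H\in\calF}\delta(H)$ for $\calF\in\RF(\calA,\LL)$. There are no extra per-$H$ constraints: a hyperplane with $m(H)=1$ is itself a rank-one resonant flat, and one with $m(H)\neq 1$ imposes nothing.

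\textbf{Why neither of your mechanisms supplies the link.}
In Step 2 you assert that the ampleness of $\pi^{*}(\text{hyperplane class})-\epsilon\sum_{\calF}c_{\calF}E(\calF)$ holds for small $\epsilon$. That is a property of the blow-up alone and does not depend on $\LL$. If it were the operative condition, every $\LL$ would be nonresonant.

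In Step 3 the integer shifts $n_H$ with $\sum_H n_H=0$ preserve $\sum_H\alpha_H=0$, so the bundle $\mathcal{O}_X(-B)$ does not change. The groups $H^{q}(X,\Omega^{p}_X(\log D)\otimes\V)$ depend only on the bundle. So the $n_H$ are invisible to the vanishing and merely move residues by integers.

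The residue condition of Theorem~\ref{thm: algebraic de rham thm} can always be met by choosing $B$ (e.g.\ the canonical extension). So the system you dualise does not come from anything the proof needs, and the certificate $\lambda$ has no role as coefficients of $B$.

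\textbf{The missing idea} is the class computation behind Lemma~\ref{lem: vanishing lemma}. For a logarithmic extension $\V$ of $\LL$ with residues $\rho_{\calF}$, the relations $\pi^{*}H\sim\sum_{\calF\ni H}E(\calF)$ give
\[
\V^{-1}\sim_{\QQ}\sum_{\calF\in\IF(\calA)^{0}}\Bigl(\re\rho_{\calF}+\sum_{H\in\calF}\delta(H)\Bigr)E(\calF)
\]
for every rational $\delta$ with $\sum_{H}\delta(H)=0$. This is where the zero-sum constraint actually comes from.

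Esnault--Viehweg's vanishing \cite[Theorem 6.2]{esnault1992lectures} uses that $M(\calA)$ is affine and needs no ampleness. It kills $H^{q}(Y(\calA),\Omega^{p}(\log D(\calA))\otimes\V)$ for $p+q\neq n$ once all these coefficients lie strictly in $(0,1)$, strict transforms of hyperplanes included.

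Keep resonant flats out of $R$, so their residue is $0$ and Theorem~\ref{thm: algebraic de rham thm} applies. Their coefficient is then $\sum_{H\in\calF}\delta(H)$, which must be positive — this is exactly the primal side of Gordan. Non-resonant flats with $\re\alpha(\calF)=0$ and negative sum go into $R$, where the residue $\alpha(\calF)+1$ is not a positive integer.

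After shrinking $\delta$ (rather than scaling it up to integers), every coefficient lies in $(0,1)$. The integer $N$ is chosen only to clear denominators in $\V(\calA,R)^{-N}$.

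Degrees above $n$ follow from Artin vanishing, so your duality remark is unnecessary. As stated it is also inaccurate: Verdier duality relates $H^{p}$ to compactly supported cohomology, not to $H^{2n-p}$.
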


As an application of this criterion, we remove condition (2) from Theorem \ref{thm: BDY condition}. More precisely, we prove the following theorem.

\begin{thm}\label{main: one point}
	Let $ \calA $ be a line arrangement in $ \PP^{2} $ and $ \LL $ a complex rank-one local system on the complement $ M(\calA) $ with monodromy map $ m \colon \calA \rightarrow \CC^{\times} $. Suppose that
	\begin{enumerate}
		\item  there exists a line $ H_{1} \in \calA $ with $ m(H_{1}) \neq 1 $ that contains a unique resonant point $ p $, and
		\item there exists a line $ H_{2} \in \calA $ with $ m(H_{2}) \neq 1 $ that does not contain $ p $.
	\end{enumerate}
	 Then the pair $ (\calA,\LL) $ is nonresonant.
\end{thm}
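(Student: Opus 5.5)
The plan is to deduce the statement from Theorem~\ref{main: n-vanishing}. For a line arrangement in $\PP^{2}$, the resonant flats come in two kinds, with the precise definition taken from \S\ref{section: preliminary}: single lines $H$ with $m(H)=1$, and the sets $\calF_{q}=\{H\in\calA : q\in H\}$ for resonant points $q$. I do not expect the whole of $\calA$ to count as a flat of the required kind. The first step is to check this description against the definition. Then I take a non-identically-zero $\lambda\colon \RF(\calA,\LL)\to\NN$, put $f(H)=\sum_{\calF\ni H}\lambda(\calF)$, and argue by contradiction, assuming $f\equiv c$ is constant.

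\textbf{Step 1 (evaluate at $H_{1}$).} Since $m(H_{1})\neq 1$ and $p$ is the unique resonant point on $H_{1}$, the only resonant flat containing $H_{1}$ is $\calF_{p}$. Hence $c=\lambda(\calF_{p})$. If $c=0$, then every flat meets some line on which $f=0$, so every flat has $\lambda=0$, which is impossible. So $c=\lambda(\calF_{p})>0$.

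\textbf{Step 2 (lines through $p$).} For every $K\ni p$ we have $f(K)=\lambda(\calF_{p})+(\text{other terms})=c$, so every other resonant flat containing $K$ has $\lambda=0$. Let $\calF\neq\calF_{p}$ be a flat with $\lambda(\calF)>0$. Then $\calF$ contains no line through $p$. If $\calF=\calF_{q}$, the point $q$ lies on no line of $\calA$ through $p$.

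\textbf{Step 3 (evaluate at $H_{2}$).} Since $p\notin H_{2}$ and $m(H_{2})\neq 1$, we get $c=\sum_{q\in H_{2}}\lambda(\calF_{q})$, where $q$ runs over resonant points on $H_{2}$ not lying on any line through $p$. In particular some such $q$ has $\lambda(\calF_{q})>0$.

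\textbf{Step 4 (contradiction).} The contradiction should come from a double-counting over lines. For each line $K\in\calF_{p}$, the intersection points $K\cap H$ with lines $H\notin\calF_{p}$ receive no $\lambda$-weight. So the total weight $c$ carried by each line not through $p$ has to be supplied by points off $\bigcup\calF_{p}$.

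I would first compute $\sum_{H\notin\calF_{p}}f(H)=c\,(|\calA|-|\calF_{p}|)$ and compare it with $\sum_{\calF\neq\calF_{p}}\lambda(\calF)|\calF|$. I would then exploit the line $H_{1}$: every line $H\notin\calF_{p}$ meets $H_{1}$ at a point different from $p$. By hypothesis (1) that point is non-resonant, so it carries no weight. Combined with the fact that any two lines meet, this should force the weighted points to be arranged too rigidly. For example, the lines through a weighted point $q$ and the lines through $p$ would have to cover $\calA$. This should then contradict $m(H_{1})\neq 1$, the relation $\prod_{H}m(H)=1$, or the resonance condition $\prod_{H\ni q}m(H)=1$.

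This combinatorial step is where I expect the main obstacle. Steps 1--3 are formal, but converting the constancy of $f$ on lines avoiding $p$ into a contradiction requires a genuine incidence argument. If the direct count fails, I would try a weighted sum of $f$ over the lines of a single weighted flat $\calF_{q}$ and compare it with the weight forced onto $H_{1}$.
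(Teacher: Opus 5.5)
\textbf{There is a genuine gap at Step 4.} Your Steps 1--3 are correct, but Step 4 cannot be carried out, because the combinatorial statement it aims at is false. Under hypotheses (1) and (2), the hypothesis of Theorem \ref{main: n-vanishing} can fail for $\calA$ itself.

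Here is an example. Take lines $H_{1},H_{1}',H_{1}''$ through a point $p$ and lines $K_{1},K_{2},K_{3}$ through a point $q$, none equal to the line $\overline{pq}$. Give all six lines monodromy $e^{2\pi\sqrt{-1}/3}$. The nine points $H\cap K_{j}$ with $H\ni p$ are distinct double points, and no line has trivial monodromy, so $\RF(\calA,\LL)=\{\calF_{p},\calF_{q}\}$. Then $p$ is the unique resonant point on $H_{1}$, and $H_{2}=K_{1}$ avoids $p$. Yet $\lambda(\calF_{p})=\lambda(\calF_{q})=1$ gives $f\equiv 1$.

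This is exactly the configuration excluded by condition (2) of Theorem \ref{thm: BDY condition}, i.e.\ precisely the case the statement is meant to add. The rigidity you expected (lines through $p$ and through a weighted $q$ cover $\calA$) does occur here. It contradicts neither $m(H_{1})\neq 1$ nor any product relation. So no incidence count on $\calA$ alone can finish the proof.

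\textbf{The missing idea is to enlarge the arrangement before applying Theorem \ref{main: n-vanishing}.}
\begin{enumerate}
\item For every intersection point $x$ of $\calA$ not on $H_{1}$, add the line through $p$ and $x$ with trivial monodromy. Call the result $\calA'$.
\item Each added line $H$ has $m(H)=1$, so the exact sequence of Lemma \ref{lem: add a hyperplane} makes $H^{1}(M(\calA),\LL)\to H^{1}(M(\calA'),\LL)$ injective. In degree $1$ this needs no hypothesis, since $H^{-1}(\calA^{H})=0$.
\item We have $H^{0}=0$ because $m(H_{1})\neq 1$, and $H^{p}=0$ for $p>2$. So if $(\calA,\LL)$ were resonant, then $H^{1}(M(\calA'),\LL)\neq 0$. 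Theorem \ref{main: n-vanishing} then gives a nonzero $\lambda$ on $\RF(\calA',\LL)$ with constant $f$.
\item The added lines all pass through $p$. Hence $p$ is still the unique resonant point on $H_{1}$, and $\calF'_{p}$ is still resonant. But now every intersection point of $\calA'$ lies on a line of $\calF'_{p}$.
\item Run your Steps 1--3 on $\calA'$. Every resonant flat containing $H_{2}$ is a point, since $m(H_{2})\neq 1$, and it shares a line with $\calF'_{p}$. So Step 2 gives it weight $0$. Hence $c=f(H_{2})=0$, contradicting Step 1.
\end{enumerate}
In the example above, the only added line is $\overline{pq}$. Evaluating $f$ at it kills the weight on $\calF_{q}\cup\{\overline{pq}\}$.
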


Furthermore, we develop restriction and lifting techniques and establish the following nonresonance theorem for line arrangements, which is compatible with \cite[Theroem 2]{MR3056004}.

\begin{thm}\label{main: 2-vanishing}
	Let $ \calA $ be a line arrangement in $ \PP^{2} $ and $ \LL $ a complex rank-one local system on the complement $ M(\calA) $ with monodromy map $ m \colon \calA \rightarrow \CC^{\times} $. If there exists a bipartition $ \calA = \calA_{1} \sqcup \calA_{2} $ such that
	\begin{enumerate}
		\item for $ i=1,2 $, there exists $ H_{i} \in \calA_{i} $ with $ m(H_{i}) \neq 1 $, and
		\item for any lines $ H \in \calA_{1} $ and $ H' \in \calA_{2} $, their intersection point is not a resonant point,
	\end{enumerate}
	then the pair $ (\calA,\LL) $ is nonresonant.
\end{thm}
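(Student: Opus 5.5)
The plan is to reduce Theorem \ref{main: 2-vanishing} to vanishing of $H^{0}$ and $H^{1}$. The case $H^{2}$ is the middle dimension, so nothing is needed there. For degrees $p\ge 3$ we use that $M(\calA)$ is affine of complex dimension $2$, so $H^{p}(M(\calA),\LL)=0$ by Artin vanishing. $H^{0}$ vanishes because $\LL$ has a nontrivial monodromy $m(H_{1})\neq 1$. Moreover, $\chi(M(\calA),\LL)=\chi(M(\calA))$ is independent of $\LL$. So the whole content is to show $H^{1}(M(\calA),\LL)=0$.

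\textbf{Restriction.} Set $\calB_{i}=\calA_{i}$ and consider $M(\calA)\subset M(\calA_{1})$. The first step is a restriction argument along the lines of $\calA_{2}$. Choose a line $H\in\calA_{2}$ and compare $M(\calA\setminus\{H\})$ with $M(\calA)$ via the Gysin/residue long exact sequence
\begin{equation*}
\cdots\to H^{p}(M(\calA\setminus\{H\}),\LL')\to H^{p}(M(\calA),\LL)\to H^{p-1}(M(\calA)\cap H,\LL|)\to\cdots,
\end{equation*}
where $\LL'$ is the extension across $H$ if $m(H)=1$. Here $M(\calA)\cap H$ is $H$ minus the points cut by the other lines. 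The restricted rank-one system on this punctured $\PP^{1}$ has monodromy $\prod_{H'\ni q}m(H')$ at a point $q$, which is nontrivial at every $q\in H\cap H'$ with $H'\in\calA_{1}$. This holds provided condition (2) is strengthened to cover double points as well.

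I would instead work with the local system directly. Condition (2) says each point $q=H\cap H'$ with $H\in\calA_{1}$, $H'\in\calA_{2}$ is either a double point or has nontrivial product of monodromies. I would then prove a \textbf{lifting} statement: if $H^{1}$ vanishes for a suitable restricted or deleted pair, it lifts to $(\calA,\LL)$.

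\textbf{Using Theorem \ref{main: n-vanishing}.} A simpler route is to apply Theorem \ref{main: n-vanishing}, as Theorem \ref{main: one point} presumably does. Resonant flats of a line arrangement are subarrangements: the resonant points, plus possibly the whole arrangement or sub-arrangements whose lines are all dense edges. By condition (2), every resonant point lies on lines of a single $\calA_{i}$, so every resonant flat $\calF$ other than $\calA$ itself is contained in $\calA_{1}$ or in $\calA_{2}$. This needs care for higher-rank flats, which I would check against the definition in \S\ref{section: preliminary}.

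Now suppose $\lambda\neq 0$ gives a constant function $c$ on $\calA$. We have $c>0$, since every hyperplane lies in the support of some flat used. The flats inside $\calA_{1}$ contribute only on $\calA_{1}$. If $\calA$ itself is a resonant flat, subtract its contribution $\lambda(\calA)$. The residual constant $c'$ is then realized on $\calA_{1}$ by flats in $\calA_{1}$ and on $\calA_{2}$ by flats in $\calA_{2}$. Condition (1) then has to exclude this, and this is exactly where a counting argument is needed. I expect this to be the main obstacle: a constant covering of each $\calA_{i}$ by its own resonant points is not combinatorially impossible, for example in a Ceva-type configuration within $\calA_{1}$.

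Hence I expect that Theorem \ref{main: n-vanishing} alone does not suffice, and the restriction/lifting technique is genuinely needed. The concrete plan is as follows. Take the Mayer–Vietoris-type decomposition, or equivalently the spectral sequence for the inclusion $M(\calA)\hookrightarrow M(\calA_{1})\times_{\PP^{2}}\cdots$. Show $H^{1}(M(\calA),\LL)\to H^{1}$ restricted to a generic line $L$ through no multiple points is injective; this follows from Lefschetz. Then deform $L$ to lie close to $H_{1}$, using condition (1) to control monodromy.
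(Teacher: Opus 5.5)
Your reduction is correct: $H^{0}=0$ because $m(H_{1})\neq 1$, Artin vanishing handles $p\geq 3$, so only $H^{1}(M(\calA),\LL)=0$ is at stake. Your diagnosis that Theorem~\ref{main: n-vanishing} alone cannot give this is also right. For example, two pencils of three lines in general position, with all monodromies nontrivial, satisfy the hypotheses, yet $\lambda\equiv 1$ on the two centres is a constant covering.

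\textbf{The gap.} The proposal contains no argument for $H^{1}=0$, and the plan you end with cannot supply one. For a generic line $L$, Lefschetz only gives an injection $H^{1}(M(\calA),\LL)\hookrightarrow H^{1}(M(\calA)\cap L,\LL)$. The target is a nontrivial rank-one system on $\PP^{1}$ minus $\#\calA$ points, so it has dimension $\#\calA-2\neq 0$, and nothing vanishes. Moving $L$ within the generic locus does not change this, and degenerating $L$ toward $H_{1}$ destroys the Lefschetz statement. The Mayer--Vietoris/fibre-product step is never specified. Most tellingly, neither condition (2) nor the line $H_{2}\in\calA_{2}$ with $m(H_{2})\neq 1$ enters your plan, yet the theorem fails without either:
\begin{itemize}
\item a pencil of $\geq 3$ lines violates only (2);
\item a pencil of three lines, plus a generic line with trivial monodromy taken as $\calA_{2}$, violates only the $H_{2}$ part of (1), and is resonant by Lemma~\ref{lem: add a hyperplane}.
\end{itemize}
Your first Gysin/deletion attempt stalls for exactly the reason you state.

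\textbf{The missing idea.} You need to \emph{raise} the dimension rather than cut it. The paper lifts $\calA$ to an arrangement $\widetilde{\calA}$ in $\PP^{3}\supset\PP^{2}=\{x_{3}=0\}$. Lines of $\calA_{1}$ become planes through one point, and lines of $\calA_{2}$ become planes through a second, generic point, so that $r(\widetilde{S})=r(S)$ iff $r(S)=r(S\cap\calA_{1})+r(S\cap\calA_{2})$ (Lemma~\ref{lem: lifting given by partition}). The proof then has two steps.

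First, $\widetilde{\calA}$ is $2$-vanishing by Corollary~\ref{cor: k-vanishing given by R(H)} applied to $\widetilde{H_{1}}$. A rank-$2$ resonant flat $\widetilde{S}\ni\widetilde{H_{1}}$ is irreducible with additive rank, hence $S\subseteq\calA_{1}$, and one needs a point of the line $Z(\widetilde{S})$ with nontrivial monodromy. If $S$ is a flat of $\calA$, this point is $Z(\widetilde{S}\cup\{\widetilde{H_{2}}\})$, with monodromy $m(H_{2})\neq 1$. Otherwise the lines of $\calA$ through $Z(S)$ form a mixed point $P$, which is non-resonant by (2), and $Z(\widetilde{P})$ has monodromy $m(P)\neq 1$.

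Second, the plane $\{x_{3}=0\}$ has trivial monodromy, so Lemma~\ref{lem: dimension shift} (via Lemma~\ref{lem: dimension shift for lifting}) gives $H^{1}(M(\calA),\LL)\cong H^{1}(M(\widetilde{\calA}),\LL)=0$. This works because the only flats of $\calA$ whose rank jumps under the lift are mixed points, which (2) makes non-resonant. These two steps are exactly where both hypotheses are used, and nothing in your proposal plays their role.
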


The paper is organized as follows. In \S\ref{section: preliminary}, we review the necessary background on the combinatorics and geometry of hyperplane arrangement complements. Our main nonresonance theorems are proved in \S\ref{section: vanishing}. 

\textbf{Acknowledgement.}
 The author thanks Yongqiang Liu, Shihao Wang and Chenglong Yu for their interest and helpful discussions.

\section{Preliminaries: Hyperplane Arrangements}\label{section: preliminary}

In this section, we recall some basic notion about combinatorics and geometry of hyperplane arrangement complements. Our goal is to introduce Theorem \ref{thm: algebraic de rham thm} and prove Lemma \ref{lem: long exact sequences}.

Throughout this section, let $ \calA $ be a hyperplane arrangement in $ \PP^{n}  $ and $ \LL $ a complex rank-one local system  on $ M(\calA) $ with monodromy map $ m \colon \calA \rightarrow \CC^{\times} $. 

\subsection{Combinatorics}\label{subsection: combinatorics}

For each subset $ S $ of $ \calA $, let $ Z(S) $ be the intersection of hyperplanes in $ S $. Denote by $ V(S) $ the cone of $ Z(S) $ and $  r(S) = \codim Z(S) $ the codimension of $ Z(S) $ in $ \PP^{n} $. In particular, note that $ V(S) = 0 $ and $ r(S) = n+1 $ when $ Z(S) = \emptyset $.

A subset $ \calF $ of $ \calA $ is called a \textbf{flat} if
\begin{equation*}
	\calF = \{H \in \calA \mid H \supseteq Z(\calF)\}.
\end{equation*} 
The \textbf{intersection lattice} $ L(\calA) $ is then defined to be the ranked lattice consisting of all flats of $ \calA $, ordered by inclusion. The \textbf{rank function} of $ L(\calA) $ is given by $ r \colon L(\calA) \rightarrow \NN $.  Furthermore, the monodromy map can be extended to $ m \colon L(\calA) \rightarrow \CC^{\times} $ as $ m(\calF) = \prod_{H \in \calF}m(H) $.

A nonempty flat $ \calF $ is called \textbf{irreducible} if there do not exist proper subsets $ S_{1}, S_{2} \subsetneq \calF $ such that $ r(S_{1}) + r(S_{2}) = r(\calF) $ and $ S_{1} \cup S_{2} = \calF $. Such a flat is called \textbf{resonant} (with respect to $ \LL $) if in addition $ r(\calF) \leq n $ and $ m(\calF) = 1 $. Denote by $ \IF(\calA) $ the set of irreducible flats of $ \calA $. Let $ \RF(\calA,\LL) \subseteq \IF(\calA) $ be the subset of resonant flats with respect to $ \LL $. The following lemma implies that irreducible flats are building blocks in $ L(\calA) $.

\begin{lem}\rm{(\cite{de1995hyperplane})}\label{lem: irreducible decomposition}
	Every nonempty flat $ \calF $ can be written as a disjoint union $ \calF = \calF_{1} \sqcup \calF_{2} \sqcup \cdots \sqcup \calF_{m} $ of irreducible flats such that $ r(\calF) = r(\calF_{1}) + r(\calF_{2}) + \cdots + r(\calF_{m}) $. Furthermore, such decomposition is unique and  satisfies the following properties:
	\begin{enumerate}
		\item Every irreducible flat $ \calG $ contained in $ \calF $ is contained in one of $ \calF_{i} $.
		\item Given any flat $ \calG_{i} \subseteq \calF_{i} $, the disjoint union $ \calG_{1} \sqcup \calG_{2} \sqcup \cdots \sqcup \calG_{m} $ is still a flat of $ \calA $.
	\end{enumerate}
\end{lem}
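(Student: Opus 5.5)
Everything here is purely combinatorial: it concerns the matroid of the central arrangement in $\CC^{n+1}$ (the cone), so I will argue with the rank function $r$ only. Throughout, "irreducible" means: no proper subsets $S_1,S_2\subsetneq\calF$ with $S_1\cup S_2=\calF$ and $r(S_1)+r(S_2)=r(\calF)$.

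\emph{Existence.} I induct on $|\calF|$. If $\calF$ is irreducible, take the trivial decomposition. Otherwise choose proper $S_1,S_2$ with $S_1\cup S_2=\calF$ and $r(S_1)+r(S_2)=r(\calF)$. Submodularity gives
\begin{equation*}
r(S_1)+r(S_2)\ge r(S_1\cup S_2)+r(S_1\cap S_2).
\end{equation*}
Hence $r(S_1\cap S_2)=0$, so $S_1\cap S_2=\emptyset$ and $\calF=S_1\sqcup S_2$. Let $\overline{S_i}=\{H\in\calA\mid H\supseteq Z(S_i)\}$ be the closure of $S_i$. Since $\calF$ is a flat, $\overline{S_i}\subseteq\calF$. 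Moreover $\overline{S_1}\cap S_2=\emptyset$: otherwise we could move a hyperplane from $S_2$ into $S_1$ without raising $r(S_1)$, which would give $r(\calF)<r(S_1)+r(S_2)$. So $S_i=\overline{S_i}$ is a flat. Applying the induction hypothesis to each $S_i$ and using additivity of rank gives the existence of the decomposition.

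\emph{The key structural fact.} Suppose $\calF=\calF_1\sqcup\cdots\sqcup\calF_m$ with $r(\calF)=\sum r(\calF_i)$. In terms of linear subspaces, $V(\calF)^\perp=\bigoplus_i V(\calF_i)^\perp$ is a direct sum. I claim: for any $S\subseteq\calF$,
\begin{equation*}
r(S)=\sum_i r(S\cap\calF_i).
\end{equation*}
Indeed, the span of the defining forms of $S$ is the sum of the spans of the pieces $S\cap\calF_i$, and these spans lie in independent summands.

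\emph{Property (1).} Let $\calG\subseteq\calF$ be irreducible and suppose it meets two different blocks $\calF_i$. Put $S_1=\calG\cap\calF_i$ and $S_2=\calG\setminus\calF_i$. The claim gives $r(S_1)+r(S_2)=r(\calG)$ with both sets proper, contradicting irreducibility.

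\emph{Uniqueness.} Given two decompositions, each block of one is irreducible, so by (1) it lies in some block of the other, and symmetrically. Since the blocks in each decomposition are disjoint, each block is then contained in a block of the other which is contained in a block of the first; this forces equality, and the two decompositions coincide.

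\emph{Property (2).} Let $\calG_i\subseteq\calF_i$ be flats and put $\calG=\bigsqcup\calG_i$. I must show that any $H\in\calA$ with $H\supseteq Z(\calG)$ lies in $\calG$. First, $Z(\calG)\supseteq Z(\calF)$, but I also need $H\in\calF$. To get it, write $\ell_H\in\sum_i V(\calG_i)^\perp$, where $\ell_H$ is the defining form of $H$. Since $V(\calG_i)^\perp\subseteq V(\calF_i)^\perp$, it follows that $\ell_H\in V(\calF)^\perp$, so $H\in\calF$ because $\calF$ is a flat. Then $H\in\calF_j$ for some $j$. By directness of the sum, the decomposition of $\ell_H$ has only its $j$-th component nonzero, so $\ell_H\in V(\calG_j)^\perp$. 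Since $\calG_j$ is a flat, $H\in\calG_j$.

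\emph{Main obstacle.} The step that needs the most care is the existence step, specifically checking that the two pieces of a splitting are themselves flats. That is exactly where the closure argument is used.
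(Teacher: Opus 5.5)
\textbf{Verdict.} Your proof is correct, but one justification in the existence step needs a one-line repair (below). The paper does not prove this lemma; it quotes it from De Concini--Procesi \cite{de1995hyperplane}, where the decomposition is developed for the subspaces of $(\CC^{n+1})^{*}$ spanned by the defining forms.

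\textbf{Comparison with the paper.} Since there is no in-paper argument, what you supply is a self-contained proof in the paper's own language of subsets and the rank function. This is useful, because the paper's definition of irreducibility quantifies over arbitrary proper subsets $S_1,S_2$, not over flats. Your key step is the right one: an additive-rank partition $\calF=\bigsqcup_i\calF_i$ forces $V(\calF)^{\perp}=\bigoplus_i V(\calF_i)^{\perp}$, hence $r(S)=\sum_i r(S\cap\calF_i)$ for every $S\subseteq\calF$. Your proof of (1) uses only this additivity, not that the $\calF_i$ are flats or irreducible, which is why uniqueness follows in two lines. Your proof of (2), via uniqueness of the direct-sum components of $\ell_H$, is clean.

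\textbf{The step to repair.} To get $\overline{S_1}\cap S_2=\emptyset$, you say that moving $H\in\overline{S_1}\cap S_2$ into $S_1$ yields $r(\calF)<r(S_1)+r(S_2)$. Moving it only gives $r(\calF)\le r(S_1)+r(S_2\setminus\{H\})$. The strict inequality would need $r(S_2\setminus\{H\})<r(S_2)$, which does not follow from anything you have stated.

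\textbf{The fix.} Either of the following works with tools you already use.
\begin{enumerate}
\item The equality $r(S_1)+r(S_2)=r(S_1\cup S_2)$ means $V(S_1)^{\perp}\cap V(S_2)^{\perp}=0$. But $H\supseteq Z(S_1)$ and $H\in S_2$ put the nonzero form $\ell_H$ in both spaces, a contradiction.
\item The pair $S_1\cup\{H\}$, $S_2$ still has union $\calF$ and rank sum $r(\calF)$, but its intersection is $\{H\}$, of rank $1$. This contradicts submodularity exactly as in your first step.
\end{enumerate}
With that line changed, the proof is complete.
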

The disjoint union $ \calF = \calF_{1} \sqcup \calF_{2} \sqcup \cdots \sqcup \calF_{m} $ in Lemma \ref{lem: irreducible decomposition} is called the \textbf{irreducible decomposition} of $ \calF $, and the $ \calF_{i} $ are called \textbf{irreducible components} of $ \calF $.

Given a flat $ \calF $, one can associate two arrangements to $\calA$ at $\calF $, namely the localization and the restriction. They are defined as follows.

Note that the cone $ V(\calF) $ is a $ \CC $-linear subspace of $ \CC^{n+1} $, and for each $ H \in \calF $, its cone $ V(H) $ is a linear subspace of codimension $ 1 $ in $ \CC^{n+1} $ containing $ V(\calF) $. The \textbf{localization} of $ \calA $ at $ \calF $ is then defined to be the hyperplane arrangement
\begin{equation*}
	\calA_{\calF} = \{\PP(V(H)/V(\calF)) \mid H \in \calF\}
\end{equation*}
in $ \PP_{\calF} = \PP(\CC^{n+1}/V(\calF)) $. Each flat $ \calG \in L(\calA) $ contained in $ \calF $ corresponds to a flat
\begin{equation*}
	\calG_{\calF} = \{\PP(V(H)/V(\calF)) \mid H \in \calG\}
\end{equation*}
of $ \calA_{\calF} $. For any subset $ R \subseteq L(\calA) $, we set $ R_{\calF} = \{\calG_{\calF} \mid \calG \in R,\ \calG \subseteq \calF\} $. 

Note that the map $ \calG \mapsto \calG_{\calF} $ induces a bijection
\begin{equation*}
	\{\calG \in L(\calA) \mid \calG \subseteq \calF\} \xrightarrow{1:1} L(\calA_{\calF}). 
\end{equation*}
Under this identification, the map $ m \colon L(\calA) \rightarrow \CC^{\times} $ is restricted to a map $ L(\calA_{\calF}) \rightarrow \CC^{\times} $, which we still denote by $ m $. In particular, when $ m(\calF) = 1 $, we have $ \prod_{H' \in \calA_{\calF}} m(H) = 1 $. So $ m $ is the monodromy map of a complex rank-one local system on $ M(\calA_{\calF}) $, which we still denote by $ \LL $.

On the other hand, for each $ H \in \calA $ such that $ H \notin \calF $, $ H \cap Z(\calF) $ is a hyperplane in $ Z(\calF) $. The \textbf{restriction} of $ \calA $ at $ \calF $ is then defined to be the hyperplane arrangement
\begin{equation*}
	\calA^{\calF} = \{H\cap Z(\calF) \mid H \in \calA \setminus \calF \}
\end{equation*}
in $ Z(\calF) \simeq \PP^{n-r(\calF)} $. Each flat $ \calG \in L(\calA) $  corresponds to a flat
\begin{equation*}
 \calG^{\calF} = \{H' \in \calA^{\calF} \mid H' \supseteq Z(\calF)\cap Z(\calG)\}
\end{equation*}
of $ \calA^{\calF} $. For any subset $ R \subseteq L(\calA) $, we set $ R^{\calF} = \{\calG^{\calF} \mid \calG \in R \} $. 

Note that the map $ \calG \mapsto \calG^{\calF} $ induces a bijection
\begin{equation*}
	\{\calG \in L(\calA) \mid \calG \supseteq \calF\} \xrightarrow{1:1} L(\calA^{\calF}). 
\end{equation*}
Similar to the case of localizations, we restrict $ m $ to a map $ L(\calA^{\calF}) \rightarrow \CC^{\times} $. Moreover, when $ m(\calF) = 1 $, this induces a complex rank-one local system on $ M(\calA^{\calF}) $. We keep the same notation $ m $ and $ \LL $.

In order to study the behavior of $ \IF(\calA) $ under localization and restriction, we need to recall the notion of a building set. A building set $ \calB $ of $ \calA $ is a finite collection of nonempty flats such that $ \calB \supseteq \IF(\calA) $ and every nonempty flat $ \calF $ can be decomposed into a disjoint union $ \calF = \calF_{1} \sqcup \calF_{2} \sqcup \cdots \sqcup \calF_{m} $ where
\begin{enumerate}
	\item $ \calF_{1}, \cdots, \calF_{m} $ are the maximal elements of $ \{\calG \in \calB \mid \calG \subseteq \calF\} $, and
	\item $ r(\calF) = r(\calF_{1}) + r(\calF_{2}) + \cdots + r(\calF_{m}) $.
\end{enumerate}
 We further denote $ \calB^{0} = \{\calF \in \calB \mid r(\calF) \leq n\} $.

In particular, $ \IF(\calA) $ is a building set of $ \calA $. The following lemma follows directly from the definition.

\begin{lem}\rm{(\cite{de1995wonderful})}\label{lem: building sets under restrictions}
	Let $ \calF $ be a nonempty flat of $ \calA $. Then:
	\begin{enumerate} 
		\item  $ \IF(\calA)_{\calF} = \IF(\calA_{\calF}) $ and the map $ 	\calG  \mapsto  \calG_{\calF} $ induces a bijection
		\begin{equation*}
			\{\calG \in \IF(\calA) \mid \calG \subsetneq \calF\} \xrightarrow{1:1}   (\IF(\calA)_{\calF})^{0}.
		\end{equation*}
		\item If $ \calF \in IF(\calA)^{0} $, then $ \IF(\calA)^{\calF} $ is a building set of $ \calA^{\calF} $, and the map $ 	\calG  \mapsto  \calG^{\calF} $ induces a bijection
		\begin{equation*}
			\{\calG \in \IF(\calA) \mid \calG \supsetneq \calF \text{ and }r(\calG) \leq n \text{ or } \calG \cup \calF \text{ is a flat} \text{ and } r(\calG) + r(\calF) = r(\calG \cup \calF) \leq n \}  \xrightarrow{1:1}   (\IF(\calA)^{\calF})^{0}.
		\end{equation*}
	\end{enumerate}
\end{lem}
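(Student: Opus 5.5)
The statement to prove is Lemma \ref{lem: building sets under restrictions}, and the plan is to work directly from the definitions of irreducible flats and building sets, using Lemma \ref{lem: irreducible decomposition} throughout.

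\textbf{Part (1).} The bijection $\calG \mapsto \calG_{\calF}$ between flats of $\calA$ contained in $\calF$ and $L(\calA_{\calF})$ preserves unions and inclusions. It also preserves rank, because $V(\calG)\supseteq V(\calF)$ and so $\codim V(\calG)/V(\calF)$ in $\CC^{n+1}/V(\calF)$ equals $r(\calG)$. Irreducibility is defined only through subsets of $\calG$ and their ranks, so $\calG$ is irreducible exactly when $\calG_{\calF}$ is; this gives $\IF(\calA)_{\calF}=\IF(\calA_{\calF})$. Since $\PP_{\calF}$ has dimension $r(\calF)-1$, the elements of $(\IF(\calA)_{\calF})^{0}$ are those of rank at most $r(\calF)-1$. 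For $\calG\subseteq\calF$, this rank condition is equivalent to $\calG\subsetneq\calF$, because flats are determined by their zero loci and a proper subflat has strictly smaller rank.

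\textbf{Part (2), building set.} Here $\calF$ is irreducible with $r(\calF)\le n$. A flat of $\calA^{\calF}$ corresponds to a flat $\calK\supseteq\calF$, via $\calK^{\calF}=\calK\setminus\calF$ restricted to $Z(\calF)$, with rank $r(\calK)-r(\calF)$. Let $\calK=\calK_1\sqcup\cdots\sqcup\calK_m$ be the irreducible decomposition. Since $\calF$ is irreducible, it lies in exactly one component, say $\calK_1$. The candidate decomposition of $\calK^{\calF}$ is $\calK_1^{\calF}\sqcup\calK_2^{\calF}\sqcup\cdots\sqcup\calK_m^{\calF}$, where $\calK_i^{\calF}$ for $i\ge2$ is the image of $\calK_i\cup\calF$. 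By property (2) of Lemma \ref{lem: irreducible decomposition}, $\calK_i\cup\calF$ is a flat, and it has rank $r(\calK_i)+r(\calF)$. The ranks then add to $r(\calK)-r(\calF)$ as required.

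It remains to show these pieces are exactly the maximal elements of $\IF(\calA)^{\calF}$ below $\calK^{\calF}$. Let $\calG\in\IF(\calA)$ with $\calG^{\calF}\subseteq\calK^{\calF}$. Then $Z(\calG)\cap Z(\calF)\supseteq Z(\calK)$, so $\calG\subseteq\calK$, and by property (1) $\calG\subseteq\calK_i$ for some $i$. Therefore $\calG^{\calF}\subseteq\calK_i^{\calF}$, which establishes maximality. Finally, $\IF(\calA)^{\calF}\supseteq\IF(\calA^{\calF})$: an irreducible flat of $\calA^{\calF}$ must coincide with its own decomposition, which forces it to equal a single $\calK_i^{\calF}$.

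\textbf{Part (2), the bijection.} Images $\calG^{\calF}$ that are nonempty and of rank at most $n-r(\calF)$ arise in two ways.
\begin{enumerate}
\item If $\calG\supsetneq\calF$, then $\calG^{\calF}$ has rank $r(\calG)-r(\calF)$, so the condition is $r(\calG)\le n$.
\item If $\calG\not\supseteq\calF$ and $\calG^{\calF}$ appears as a component (the ``$i\ge2$'' situation), then $\calG\cup\calF$ is a flat of rank $r(\calG)+r(\calF)\le n$.
\end{enumerate}
The main obstacle is showing the map is injective and has exactly this image. For general $\calG$, the flat $\calG^{\calF}$ corresponds to the closure $\calK$ of $\calG\cup\calF$. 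I would argue that if $\calG\not\supseteq\calF$, then $\calG^{\calF}$ is a genuine new building block only when $\calG$ and $\calF$ are independent in the decomposition of $\calK$. Otherwise $\calG\cup\calF$ lies in a single irreducible component, and $\calG^{\calF}$ coincides with $\calK_1^{\calF}$, which is already listed under the first type.

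Injectivity within each type follows by recovering $\calG$: in the first type $\calG$ is the flat corresponding to $\calG^{\calF}$, and in the second type $\calG$ is the irreducible component of the recovered flat $\calG\cup\calF$ other than the one containing $\calF$. A sample case to check carefully is a point $\calF$ on a line, to make sure the two types never collide.
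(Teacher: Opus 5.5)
\textbf{Verdict.} The paper gives no argument for this lemma beyond ``follows directly from the definition'' (citing De Concini--Procesi), so a direct verification like yours is the intended route. Part (1) and the building-set half of Part (2) are fine. Two small precisions there:
\begin{enumerate}
\item The additivity $r(\mathcal{K}_i\cup\calF)=r(\mathcal{K}_i)+r(\calF)$ is not literally Lemma~\ref{lem: irreducible decomposition}(2). It follows because $r(\mathcal{K})=\sum_j r(\mathcal{K}_j)$ says that $\CC^{n+1}\to\bigoplus_j\CC^{n+1}/V(\mathcal{K}_j)$ is onto. Since $V(\calF)\supseteq V(\mathcal{K}_1)$, the map $\CC^{n+1}\to\CC^{n+1}/V(\mathcal{K}_i)\oplus\CC^{n+1}/V(\calF)$ is then onto as well.
\item $\IF(\calA)^{\calF}$ literally contains $\emptyset=\calF^{\calF}$. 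This must be discarded, as must the piece $\mathcal{K}_1^{\calF}$ when $\mathcal{K}_1=\calF$.
\end{enumerate}

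\textbf{The gap.} In the bijection you never show that an element of the first type and one of the second type cannot have the same image; you only propose testing a sample case. This closes in one line via the bijection $\mathcal{K}\mapsto\mathcal{K}^{\calF}$ between flats containing $\calF$ and $L(\calA^{\calF})$.
\begin{enumerate}
\item For $\calG\supsetneq\calF$, the image $\calG^{\calF}$ corresponds to $\calG$ itself, which is irreducible.
\item For $\calG'$ of the second type, $(\calG')^{\calF}$ corresponds to the flat $\calG'\cup\calF$. This flat is reducible, since $\calG'$ and $\calF$ are proper subsets with additive ranks.
\end{enumerate}
Here you also need that additivity forces $\calG'\cap\calF=\emptyset$. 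A common hyperplane $H$ would give $V(\calG')+V(\calF)\subseteq V(H)$, making $r(\calG'\cup\calF)<r(\calG')+r(\calF)$. This disjointness is used both for $(\calG')^{\calF}\neq\emptyset$ and for recovering $\calG'$ as ``the other component''; you use it implicitly.

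\textbf{Surjectivity.} Your sketch is correct but should be made explicit using the sandwich already available from your maximality argument. Take $\calG\in\IF(\calA)$ with $\calG^{\calF}\neq\emptyset$ of rank $\le n-r(\calF)$, and let $\mathcal{K}$ be the corresponding flat. Then $\calG\subseteq\mathcal{K}_i$ for some $i$, and $\calG^{\calF}\subseteq\mathcal{K}_i^{\calF}\subseteq\mathcal{K}^{\calF}=\calG^{\calF}$.
\begin{enumerate}
\item If $i=1$, the image is hit by $\mathcal{K}_1$, which is of the first type: $\mathcal{K}_1\supsetneq\calF$ since its image is nonempty, and $r(\mathcal{K}_1)\le r(\mathcal{K})\le n$.
\item If $i\ge 2$, the image is hit by $\mathcal{K}_i$, which is of the second type: $\mathcal{K}_i\cup\calF$ is a flat of additive rank $\le r(\mathcal{K})\le n$.
\end{enumerate}
With these additions your argument is complete.
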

As a result, by abuse of notation, we also regard  $ (\IF(\calA)^{\calF})^{0} $ and $ (\IF(\calA)_{\calF})^{0} $ as subsets of $ \IF(\calA)^{0} $ via the inverses of bijections in Lemma \ref{lem: building sets under restrictions}. Furthermore, when $ \calF \in \RF(\calA,\LL) $, we denote $ \RF(\calA,\LL)^{\calF} = \RF(\calA,\LL) \cap (\IF(\calA)^{\calF})^{0} $ and $ \RF(\calA,\LL)_{\calF} = \RF(\calA,\LL) \cap (\IF(\calA)_{\calF})^{0} $.
\subsection{Wonderful Compactifications}\label{subsection: wonderful compactification}
Given a building set $ \calB $ of $ \calA $, one can construct a smooth projective variety $ Y(\calA;\calB) $ as follows. For each flat $ \calF \in \calB $, the linear projection $ \CC^{n+1} \rightarrow \CC^{n+1}/V(\calF) $ induces a regular morphism $ M(\calA) \rightarrow \PP_{\calF} = \PP(\CC^{n+1}/V(\calF)) $. Collecting all of them together, we obtain a regular morphism $ M(\calA) \rightarrow \prod_{\calF \in \calB} \PP_{\calF} $. The \textbf{wonderful compactification} $ Y(\calA;\calB) $ of $ M(\calA) $ with respect to $ \calB $ is then defined to be the closure of the graph $ \Gamma(\calA;\calB) $ of this morphism in $ \PP^{n} \times \prod_{\calF \in \calB} \PP_{\calF} $.

  \begin{lem}\label{lem: wonderful compactification}\rm{(\cite{de1995wonderful})}
  	$ Y(\calA;\calB) $ is a smooth complex variety satisfying the following condition:
  	\begin{enumerate}
  		\item The graph $ \Gamma(\calA;\calB) $ is a Zariski-open subset of $ Y(\calA;\calB) $, with its complement $ D(\calA;\calB) = Y(\calA;\calB) \setminus \Gamma(\calA;\calB) $ being a normal crossing divisor. The irreducible components $ E(\calF;\calB) $ of $ D(\calA;\calB) $ are smooth and indexed by the flats $ \calF \in \calB^{0} $. 
  		\item The projection $ \PP^{n} \times \prod_{\calF \in \calB} \PP_{\calF} \rightarrow \PP^{n} $ induces a birational morphism $ \pi \colon Y(\calA) \rightarrow \PP^{n} $. The restriction $ \pi|_{\Gamma(\calA;\calB)} \colon \Gamma(\calA;\calB) \rightarrow M(\calA) $ is an isomorphism.
  	\end{enumerate}
 \end{lem}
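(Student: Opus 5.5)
The statement to prove is Lemma \ref{lem: wonderful compactification}, the standard De Concini--Procesi structure theorem for $Y(\calA;\calB)$. I would follow their inductive blow-up description rather than work directly with the graph closure.

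\textbf{Step 1: open part.} Since the first factor of the ambient product is $\PP^{n}$, the projection $\pi$ restricted to the graph $\Gamma(\calA;\calB)$ is an isomorphism onto $M(\calA)$. This is immediate, because a graph of a morphism is isomorphic to its source. The graph is closed in $M(\calA)\times\prod_{\calF\in\calB}\PP_{\calF}$, which is open in the ambient product. Hence $\Gamma(\calA;\calB)$ is open in its closure $Y(\calA;\calB)$. It follows that $\pi$ is birational and proper, since $Y(\calA;\calB)$ is projective. This settles part (2) and the openness in part (1).

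\textbf{Step 2: identify $Y$ with an iterated blow-up.} Order the flats of $\calB^{0}$ by increasing dimension of $Z(\calF)$, that is, by decreasing rank. Blow up $\PP^{n}$ successively along the strict transforms of the $Z(\calF)$: first the points, then the lines, and so on.

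The key claim is that at each stage the centers of the same dimension are disjoint and smooth. Two such strict transforms meet only along transforms of $Z(\calF)\cap Z(\calG)=Z(\calF\vee\calG)$, and those have already been blown up. The building set axiom is exactly what makes this work. If $\calF\vee\calG$ is not itself in $\calB$, its decomposition into maximal building-set elements with additive ranks shows that the transforms of $Z(\calF)$ and $Z(\calG)$ intersect transversally after the earlier blow-ups.

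One then verifies that the iterated blow-up maps to each $\PP_{\calF}$. After blowing up $Z(\calF)$, the linear projection away from $V(\calF)$ extends regularly. The induced map to $\PP^{n}\times\prod\PP_{\calF}$ is a closed embedding whose image is the closure of $\Gamma$. Since each center is smooth, the result is smooth.

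\textbf{Step 3: boundary.} The complement of $\Gamma$ is the union of the exceptional divisors $E(\calF;\calB)$ for $\calF\in\calB^{0}$, together with the strict transforms of the hyperplanes. The hyperplanes $H$ are themselves rank-one irreducible flats in $\calB^{0}$, so they are included in this indexing. Flats of rank $n+1$ have empty $Z(\calF)$ and contribute nothing, which is why the index set is $\calB^{0}$.

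Normal crossings follows from the local description: a set of these divisors has nonempty intersection iff the corresponding flats form a nested set. In that case there are local coordinates in which the divisors are coordinate hyperplanes. To get these coordinates, choose for a nested set $\{\calF_{i}\}$ adapted linear coordinates and write the blow-up charts as monomial substitutions.

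\textbf{Main obstacle.} The hardest step is the transversality and nested-set analysis in Steps 2 and 3, namely showing that the building set axiom forces clean intersections of the strict transforms at every stage. I would do this by localizing via Lemma \ref{lem: building sets under restrictions}(1) and inducting on rank, following \cite{de1995wonderful}.
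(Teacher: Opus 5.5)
The paper gives no proof of this lemma; it only cites De Concini--Procesi. So the comparison is with the standard argument. Your Step 1 is fine, and so is the indexing in Step 3: rank-one flats give the strict transforms of the hyperplanes, and rank-$(n+1)$ flats contribute nothing. The problem is Step 2.

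\textbf{Same-rank centers need not be disjoint.} Your ``key claim'' that they are is false, and your own next sentence (transversal intersection when $\calF\vee\calG\notin\calB$) contradicts it. For example, in $\PP^{4}$ let $\calA$ consist of $x_0,x_1,x_0+x_1,x_2,x_3,x_2+x_3,x_4$, and take $\calB=\IF(\calA)$.
\begin{itemize}
\item The only elements of $\calB^{0}$ of rank $\geq 2$ are $\calF=\{x_0,x_1,x_0+x_1\}$ and $\calG=\{x_2,x_3,x_2+x_3\}$, so nothing is blown up before them.
\item The planes $Z(\calF)$ and $Z(\calG)$ meet at the point $[0:0:0:0:1]$.
\item Their union is singular there, so ``each center is smooth'' does not justify blowing them up simultaneously.
\end{itemize}
The repair is to blow up the elements of $\calB^{0}$ one at a time, in any order refining decreasing rank. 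When $\mathcal{K}$ is blown up, the current strict transform of $Z(\calF)$ is in one of three situations:
\begin{itemize}
\item it contains the center, if $\mathcal{K}\supsetneq\calF$;
\item otherwise it is disjoint from the center, if $\calF\vee\mathcal{K}\in\calB$, since that join has larger rank and was already blown up;
\item otherwise it is transversal to the center, if $\calF\vee\mathcal{K}\notin\calB$, since $\calF$ and $\mathcal{K}$ then lie in different maximal building-set pieces of the join.
\end{itemize}
In every case the strict transform stays smooth. Establishing this trichotomy geometrically at every stage is exactly the analysis you defer to De Concini--Procesi.

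\textbf{The closed embedding is only asserted.} The sentence ``the induced map to $\PP^{n}\times\prod\PP_{\calF}$ is a closed embedding whose image is the closure of $\Gamma$'' carries the whole identification of the blow-up with $Y(\calA;\calB)$. Properness and birationality only give a surjection onto $Y(\calA;\calB)$. Even the claim that the projection to $\PP_{\calF}$ extends regularly needs an argument. One way to supply both:
\begin{itemize}
\item Use the same trichotomy inductively to show that the pullback of the ideal of $Z(\calF)$ to the stage $X'$ just before $\calF$ is the ideal of its strict transform times an invertible ideal.
\item Then blowing up that strict transform equals blowing up the pulled-back ideal. That blow-up is the strict transform of $X'$ inside $X'\times_{\PP^{n}}\mathrm{Bl}_{Z(\calF)}\PP^{n}\subseteq X'\times\PP_{\calF}$.
\item Composing these closed embeddings identifies the last stage with the closure of $\Gamma(\calA;\calB)$.
\end{itemize}
De Concini--Procesi avoid this step by working directly on the closure with explicit nested-set charts. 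Those charts give smoothness, the embedding and the normal-crossing coordinates at once. With the two repairs above, your blow-up-first route also works.
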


As a result, we can identify $ \Gamma(\calA;\calB) $ with $ M(\calA) $ and regard $ Y(\calA) $ as a compactification of $ M(\calA) $.

In this paper, we mainly focus on the case where $ \calB = \IF(\calA) $. In this case, we drop the building set from all notation for simplicity. The following lemma implies that $ \pi \colon Y(\calA) \rightarrow \PP^{n} $ is exactly the blow-up constructed in \cite[Theorem 3.1]{schechtman1995local}.

\begin{lem}\rm{(\cite{de1995hyperplane})}
	The birational morphism $ \pi \colon Y(\calA) \rightarrow \PP^{n} $ factors as a composition
	\begin{equation*}
		\begin{tikzcd}
			Y(\calA) \simeq Y_{0} \arrow{r}{\pi_{1}} & Y_{1}  \arrow{r}{\pi_{2}} & \cdots  \arrow{r}{\pi_{n-1}} & Y_{n-1}  \arrow{r}{\pi_{n}} & Y_{n} = \PP^{n}
		\end{tikzcd}
	\end{equation*}
	with $ \pi_{k} \colon Y_{k-1} \rightarrow Y_{k} $ the blow-up along the proper transform of $ Z_{k} $ under $ \pi_{k+1} \circ \cdots \circ \pi_{n} $, where $ Z_{k} $ is the union of $ Z(\calF) $ for all irreducible flats $ \calF $ of rank $ k $. Furthermore, for any irreducible flat $ \calF $ of rank $ k $, the divisor $ E(\calF) $ can be obtained as follows: take the proper transform of $ Z(\calF) $ under $ \pi_{k+1} \circ \cdots \circ \pi_{n} $; then take the exceptional divisor of $ \pi_{k} $ corresponding to this subvariety; finally, $ E(\calF) $ is the proper transform of this exceptional divisor under $ \pi_{1} \circ \cdots \circ \pi_{k-1} $.
\end{lem}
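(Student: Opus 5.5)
The plan is to deduce the factorization from the general theory of wonderful models of subspace arrangements. We use De Concini--Procesi's description of $Y(\calA;\calB)$ as an iterated blow-up, in the sharpened form due to Li Li. That form says the following. Let $\calB$ be a building set, and list the centers $\{Z(\calF)\}_{\calF\in\calB^{0}}$ as $G_{1},\dots,G_{N}$ in an order such that $G_{i}\subseteq G_{j}$ implies $i\le j$. Then successively blowing up the proper (dominant) transforms of $G_{1},G_{2},\dots,G_{N}$ produces a smooth variety. This variety is isomorphic over $\PP^{n}$ to the closure of the graph $\Gamma(\calA;\calB)$, and the boundary component indexed by $\calF$ is the transform of the exceptional divisor created when $Z(\calF)$ was blown up.

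\textbf{Step 1: choose the order.} Take $\calB=\IF(\calA)$ and order the centers by decreasing rank, i.e. by increasing dimension of $Z(\calF)$. Flats of equal rank are ordered arbitrarily. If $Z(\calF)\subsetneq Z(\calG)$ then $r(\calF)>r(\calG)$, so this order is compatible with inclusion and Li's theorem applies. Grouping together the centers of a fixed rank $k$, the blow-ups of that group form one stage $\pi_{k}$. We start with $\pi_{n}$, the blow-up at the points $Z_{n}$, and end with $\pi_{1}$. The latter is an isomorphism, which explains $Y(\calA)\simeq Y_{0}$.

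\textbf{Step 2: blowing up each $Z_{k}$ at once is legitimate.} One must check that blowing up the proper transform of $Z_{k}$ as a single center agrees with blowing up its pieces one at a time. Take two irreducible flats $\calF\neq\calF'$ of rank $k$ with $Z(\calF)\cap Z(\calF')\neq\emptyset$, and let $\calG$ be the flat with $Z(\calG)=Z(\calF)\cap Z(\calF')$; it has rank $>k$.
\begin{itemize}
\item If $\calG$ is irreducible, $Z(\calG)$ was blown up at an earlier stage. This separates the proper transforms of $Z(\calF)$ and $Z(\calF')$, because the building-set property makes their normal directions along $Z(\calG)$ independent.
\item If $\calG$ is reducible, Lemma \ref{lem: irreducible decomposition}(1) puts $\calF$ and $\calF'$ inside irreducible components of $\calG$. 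Since $\calF\not\subseteq\calF'$ and both have rank $k$, they lie in distinct components. Then $Z(\calF)$ and $Z(\calF')$ meet transversally, and this transversality persists for their proper transforms.
\end{itemize}
In both cases the proper transforms of the rank-$k$ centers are pairwise disjoint or transversal. For transversal smooth centers, the iterated blow-up is independent of order (again Li's result). So $\pi_{k}$ is well defined as ``the blow-up along the proper transform of $Z_{k}$.''

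\textbf{Step 3: identify the divisors.} Fix $\calF$ of rank $k$. Its divisor is created at stage $\pi_{k}$ as the exceptional divisor over the proper transform of $Z(\calF)$. Li's identification then shows that $E(\calF)$ is the proper transform of this exceptional divisor under the remaining blow-ups $\pi_{1}\circ\cdots\circ\pi_{k-1}$.

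\textbf{Expected obstacle.} The main difficulty is Step 2: controlling intersections of equal-rank centers after the earlier blow-ups. In the irreducible-$\calG$ case this needs a local computation near $Z(\calG)$ using Lemma \ref{lem: building sets under restrictions}(1) to pass to the localization $\calA_{\calG}$. I would first try to prove that the proper transforms of $Z(\calF)$ and $Z(\calF')$ meet the exceptional divisor of $Z(\calG)$ in the projectivized subspaces $\PP(V(\calF)/V(\calG))$ and $\PP(V(\calF')/V(\calG))$ of $\PP(\CC^{n+1}/V(\calG))$, and that these are disjoint because $\calF\cup\calF'$ spans $\calG$.
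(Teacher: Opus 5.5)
\textbf{Comparison with the paper.} The paper does not prove this lemma; it imports it from De Concini--Procesi, where the rank-by-rank blow-up description is part of the construction of the projective wonderful model. You instead derive it from Li Li's theorem: for an inclusion-compatible order of a building set, the wonderful model is the iterated blow-up along dominant transforms, and the boundary divisor indexed by $G$ is the exceptional divisor created at $G$, carried along by proper transforms. This is a legitimate alternative route. It works once you record three identifications:
\begin{itemize}
\item the building-set axiom for $\{Z(\calF)\}_{\calF\in\IF(\calA)^0}$ is Lemma \ref{lem: irreducible decomposition}(1) together with rank additivity;
\item Li's model is $Y(\calA)$, because $\mathrm{Bl}_{Z(\calF)}\PP^n$ is the closure of the graph of $\PP^n\dashrightarrow\PP_{\calF}$;
\item $\pi_1$ is an isomorphism because it blows up a Cartier divisor.
\end{itemize}
The advantage of your route is that it isolates the only content not already in Li's statement: each rank group can be blown up in a single step. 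The citation hides this point.

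\textbf{Repairs needed in Step 2.} That grouping step has four problems as written.
\begin{itemize}
\item[(a)] In the reducible case, the reason you give for $\calF,\calF'$ lying in different components is wrong. Distinct rank-$k$ irreducible flats can lie in a common irreducible flat, so ``$\calF\not\subseteq\calF'$'' proves nothing. The correct argument: if both lay in one component $\calG_i$, the flat spanned by $\calF\cup\calF'$ would lie in $\calG_i\subsetneq\calG$. Lemma \ref{lem: irreducible decomposition}(2) then gives $\calG=\calF\sqcup\calF'$ with $r(\calG)=2k$, which is the transversality you want.
\item[(b)] Pairwise transversality is not enough when three or more rank-$k$ transforms pass through one point. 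The whole family must be transversal there.
\item[(c)] Order-independence is not what the lemma asserts. You need $\bigcap I_i=\prod I_i$ locally and $\mathrm{Bl}_{IJ}X\cong\mathrm{Bl}_{J\cdot\mathcal{O}}(\mathrm{Bl}_I X)$. You also need that, for transversal centers, the inverse-image ideal of $J$ is the ideal of the proper transform. Only then is the composite of the rank-$k$ blow-ups equal to the blow-up of the reduced union.
\item[(d)] In the irreducible case, the separation over $Z(\calG)$ must survive the earlier blow-ups of centers $Z(\mathcal{K})\subsetneq Z(\calG)$. Your computation starts this: the traces on the exceptional divisor over $Z(\mathcal{K})$ meet in $\PP(V(\calG)/V(\mathcal{K}))$, which is the trace of the transform of $Z(\calG)$. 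It still has to be run as an induction.
\end{itemize}

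\textbf{A cleaner fix.} Points (a), (b) and (d) can all be bypassed with Li's companion result. After all elements of rank $>k$ have been blown up, the exceptional divisors together with the remaining dominant transforms again form a building set. For $k\ge 2$, take the rank-$k$ transforms through a point. They are minimal among building-set elements containing their common intersection. That intersection has dimension $<n-k$, so it is neither a divisor nor a remaining transform, and hence is not in the building set. Therefore they meet transversally as a family, and no case split on $\calG$ is needed.
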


 The following lemma describes $ E(\calF) $.

 \begin{lem}\label{lem: EF}\rm{(\cite{de1995wonderful})}
 	Given an irreducible flat $ \calF $, there exists a canonical isomorphism
 	\begin{equation*}
 		E(\calF) \simeq Y(\calA^{\calF};\IF(\calA)^{\calF}) \times Y(\calA_{\calF}).
 	\end{equation*}
 	 Furthermore, under this isomorphism, for any $ \calG \in \IF(\calA)^{0} $ such that $ \calG \neq \calF $, we have
 	\begin{equation*}
 		E(\calG) \cap  E(\calF) = \begin{cases}
 			E(\calG^{\calF};\IF(\calA)^{\calF}) \times Y(\calA_{\calF}), & \calG \in (\IF(\calA)^{\calF})^{0}, \\
 			Y(\calA^{\calF};\IF(\calA)^{\calF}) \times E(\calG_{\calF}), & \calG \in (\IF(\calA)_{\calF})^{0}, \\
 			\emptyset, & \text{otherwise.} \\
 		\end{cases}
 	\end{equation*}
 	 In particular, letting
 	\begin{equation*}
 		E(\calF)^{\circ} = E(\calF) \setminus \Supp(D(\calA)-E(\calF)),
 	\end{equation*}
 	we have $ E(\calF)^{\circ} = M(\calA^{\calF}) \times M(\calA_{\calF}) $ under the above isomorphism.
 \end{lem}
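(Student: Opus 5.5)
We would prove the lemma with the local description of the wonderful model by nested sets, as in \cite{de1995wonderful}, reading off the product structure coordinatewise.

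\textbf{Step 1: the map.} For $ \calG \in \IF(\calA) $ with $ \calG \subseteq \calF $ we have
\begin{equation*}
	\PP_{\calG} = \PP\bigl((\CC^{n+1}/V(\calF))/(V(\calG)/V(\calF))\bigr),
\end{equation*}
so the factor $ \PP_{\calF} $ together with the factors $ \PP_{\calG} $, $ \calG \subsetneq \calF $, is exactly the ambient space of $ Y(\calA_{\calF}) $. By Lemma \ref{lem: building sets under restrictions}(1), these flats are $ \IF(\calA_{\calF}) $. On the other side, the $ \PP^{n} $-coordinate of a point of $ E(\calF) $ lies in $ Z(\calF) $. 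The coordinates indexed by the flats in $ (\IF(\calA)^{\calF})^{0} $ correspond to the building set $ \IF(\calA)^{\calF} $ of $ \calA^{\calF} $ (Lemma \ref{lem: building sets under restrictions}(2)), and restrict to the projections $ Z(\calF) \to \PP(V(\calF)/V(\calF)\cap V(\calG)) $. Projecting $ E(\calF) $ onto these two groups of factors gives a morphism
\begin{equation*}
	\Phi \colon E(\calF) \to Y(\calA^{\calF};\IF(\calA)^{\calF}) \times Y(\calA_{\calF}).
\end{equation*}
The remaining coordinates are determined by these ones on the closure, because the corresponding flats are not nested with $ \calF $.

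\textbf{Step 2: $ \Phi $ is an isomorphism.} We would first check it over the open part. A point of $ E(\calF)^{\circ} $ is a limit of points of $ M(\calA) $ approaching a generic point $ x \in Z(\calF) $ (generic meaning $ x \in M(\calA^{\calF}) $) along a normal direction in $ \PP(\CC^{n+1}/V(\calF)) $. That direction avoids the hyperplanes of $ \calA_{\calF} $ exactly when no deeper flat $ \calG \subsetneq \calF $ is hit. This yields a bijection
\begin{equation*}
	E(\calF)^{\circ} \to M(\calA^{\calF}) \times M(\calA_{\calF}),
\end{equation*}
which in fact is the restriction of the blow-up description: the exceptional divisor over the proper transform of $ Z(\calF) $ is a projectivized normal bundle. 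To globalize, we use the de Concini--Procesi affine charts indexed by maximal nested sets $ \mathcal{S} \ni \calF $ with adapted bases. In such a chart $ Y(\calA) $ is an affine space with coordinates $ u_{\calG} $, $ \calG \in \mathcal{S} $, together with coordinates in each block. The divisor $ E(\calF) $ is $ \{u_{\calF}=0\} $, and the nested set $ \mathcal{S} $ splits as $ \{\calG \subsetneq \calF\} \sqcup \{\calG \not\subseteq \calF\} $. These two parts are maximal nested sets for $ \IF(\calA_{\calF}) $ and for $ \IF(\calA)^{\calF} $ respectively, so in the chart $ \Phi $ is the identity on the two sets of coordinates. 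Hence $ \Phi $ is an open immersion on each chart. Combined with properness of $ E(\calF) $ and irreducibility of the target, $ \Phi $ is an isomorphism. Alternatively, one can use that $ \Phi $ is birational and finite onto a smooth variety and conclude by Zariski's main theorem.

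\textbf{Step 3: intersections and the open part.} The same charts give the intersection formula. We know $ E(\calG)\cap E(\calF) \neq \emptyset $ iff $ \{\calF,\calG\} $ is nested, that is, $ \calG \subsetneq \calF $, or $ \calG \supsetneq \calF $, or $ \calG\cup\calF $ is a flat with $ r(\calG)+r(\calF)=r(\calG\cup\calF) $. The first case is $ \calG \in (\IF(\calA)_{\calF})^{0} $, and $ \{u_{\calG}=0\} $ is then the divisor $ E(\calG_{\calF}) $ in the second factor. The other two cases are precisely $ (\IF(\calA)^{\calF})^{0} $ by Lemma \ref{lem: building sets under restrictions}(2), giving $ E(\calG^{\calF};\IF(\calA)^{\calF}) $ in the first factor. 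Removing all these divisors yields $ E(\calF)^{\circ} = M(\calA^{\calF}) \times M(\calA_{\calF}) $.

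The main obstacle is Step 2: verifying that the nested-set charts of $ Y(\calA) $ restrict, on $ u_{\calF}=0 $, to products of charts of the two smaller wonderful models. This requires the combinatorial fact that nested sets containing $ \calF $ split as stated. It is there that the restriction building set $ \IF(\calA)^{\calF} $, rather than $ \IF(\calA^{\calF}) $, is forced.
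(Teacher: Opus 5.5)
The paper does not prove this lemma itself; it imports it from de Concini--Procesi. Your outline (define $\Phi$ by projecting $E(\calF)$ onto the two groups of factors, then check it on nested-set charts $\mathcal{S}\ni\calF$ whose members split into those contained in $\calF$ and those not) is essentially their argument, and it is sound, apart from two small corrections that do not affect the approach.

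First, in Step 3, nestedness in the projective setting must be taken with the top element adjoined. A transversal pair with $r(\calG\cup\calF)=n+1$ has $Z(\calG)\cap Z(\calF)=\emptyset$, hence $E(\calG)\cap E(\calF)=\emptyset$. So your ``iff'' needs the condition $r(\calG\cup\calF)\leq n$, which is exactly the condition in Lemma~\ref{lem: building sets under restrictions}(2).

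Second, in Step 2 the flat $\calF$ itself belongs to the part of $\mathcal{S}$ below $\calF$, as the top element of $\IF(\calA_{\calF})$; it should not be dropped from both parts of the split.
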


Recall that $ \LL $ is a complex rank-one local system on $ M(\calA) $ with monodromy map $ m $. By Lemma \ref{lem: wonderful compactification}, the monodromy of $ \LL $ around $ E(\calF) $ is exactly $ m(\calF) = \prod_{H \in \calF}m(H) $. In particular, the monodromy around $ E(\calF) $ is trivial if and only if $ E(\calF)  \in \RF(\calA,\LL) $. So the local system $ \LL $ can be extended to one on $ E(\calF)^{\circ} $ for any $ \calF \in \RF(\calA,\LL) $. Moreover, for any irreducible flat $ \calG \neq \calF $ such that $ 	E(\calG) \cap  E(\calF) \neq \emptyset $, the monodromy around $ 	E(\calG) \cap  E(\calF) $ is exactly $ m(\calG) $, the monodromy around $ E(\calG) $. So the restriction of this extension to $ E(\calF)^{\circ} $ is compatible with the local systems on $ M(\calA^{\calF}) $ and $ M(\calA_{\calF}) $ defined in Subsection \ref{subsection: combinatorics}.

Furthermore, the local system $ \LL $ can be extended to one on the open set
\begin{equation*}
	Y(\calA) \setminus \Supp\left(D(\calA) - \sum\limits_{\calF \in \RF(\calA,\LL)}E(\calF)\right) \supseteq M(\calA).
\end{equation*}
All local systems in this section will be the restrictions of this one. By abuse of notation, we always denote them by $ \LL $.

\subsection{Extensions of Local Systems}\label{subsection: Extension}

Recall that $ \LL $ is a complex rank-one local system on $ M(\calA) = Y(\calA)\setminus D(\calA) $.  By Deligne's Riemann-Hilbert correspondence, there exists a regular meromorphic connection $ (\mathcal{M},\nabla) $ extending $ \LL $ on $ Y(\calA) $, which is unique up to isomorphism. This can be explicitly constructed as follows.

 For each flat $ \calF $, let $ \alpha(\calF) $ be the complex number such that  $ \exp(-2\pi \sqrt{-1} \alpha(\calF)) = m(\calF) $ and the real part of $ \alpha(\calF) $ lies in $ [0,1) $. Denote by $ d(\calF) = \alpha(\calF) - \sum_{H \in \calF}\alpha(H) $. By definition, $ d(\calF) $ is always an integer.

Denote by $ O $ the sheaf of holomorphic functions on $ Y(\calA) $ and $ O[D(\calA)] $ the sheaf of meromorphic functions on $ Y(\calA) $ that are holomorphic on $ M(\calA) $. Choose a defining polynomial $ l_{H} $ for each hyperplane $ H \in \calA $. Recall that we have defined a birational morphism $ \pi \colon Y(\calA) \rightarrow \PP^{n} $. For any generic hyperplane $ H_{0} $ in $ \PP^{n} $ and its defining polynomial $ l_{H_{0}} $, $  Q_{H,H_{0}} = \frac{l_{H}}{l_{H_{0}}} \circ \pi $ is a holomorphic function on the open set $ U_{H_{0}} = Y(\calA) \setminus \pi^{-1}H_{0} $ and we can define a meromorphic connection along $ D(\calA)|_{U_{H_{0}}} $:
\begin{equation}\label{eqn: connection on Y}
	\nabla \colon O_{U_{H_{0}}}[D(\calA)|_{U_{H_{0}}}] \longrightarrow \Omega^{1}_{U_{H_{0}}}\otimes O_{U_{H_{0}}}[D(\calA)|_{U_{H_{0}}}],\ g  \mapsto dg +  g \cdot \sum\limits_{H \in \calA}\alpha(H)\frac{dQ_{H,H_{0}}}{Q_{H,H_{0}}}.
\end{equation} 

Denote by $ \mathcal{M} = \pi^{*}O_{\PP^{n}}(d(\calA)) \otimes O[D(\calA)] $. Note that $ d(\calA) = - \sum_{H \in \calA} \alpha(H) $ since $ \alpha(\calA) = 0 $. So by gluing all these $ \nabla $ together, we obtain a meromorphic connection on $ Y(\calA) $ along $ E $:
\begin{equation*}
	\nabla \colon \mathcal{M} \longrightarrow \Omega_{Y(\calA)}^{1} \otimes \mathcal{M}.
\end{equation*}

By Formula (\ref{eqn: connection on Y}), the meromorphic connection $ (\mathcal{M},\nabla) $ is regular. Furthermore, the monodromy of its horizontal section around each hyperplane $ H \in \calA $ is exactly $ \exp(-2\pi \sqrt{-1} \alpha(H)) = m(H) $. Therefore, the meromorphic connection $ (\mathcal{M},\nabla) $ is the regular meromorphic extension of $ \LL $ on $ Y(\calA) $.

The following lemma explicitly constructs the canonical extension of $ \LL $ on $ Y(\calA) $ with logarithmic poles along $ D(\calA) $.

\begin{lem}\label{lem: canonical extension}
 Define
	\begin{equation*}
		\V(\calA) = \pi^{*}O_{\PP^{n}}(d(\calA)) \otimes O(-\sum\limits_{\calF \in \IF(\calA)^{0}}d(\calF)E(\calF)) \subset \mathcal{M}.
	\end{equation*}
	Then $ (\mathcal{M},\nabla) $ has logarithmic poles along $ D(\calA) $ with respect to $ \V(\calA) $, and the residue along $ E(\calF) $ with respect to $ \V(\calA) $ is
	\begin{equation*}
		\Res_{E(\calF)}(\V(\calA)) = \alpha(\calF).
	\end{equation*}
In particular, the real part of $ \Res_{E(\calF)}(\V(\calA)) $ lies in $ [0,1) $ for any $ \calF \in \IF(\calA)^{0} $.
\end{lem}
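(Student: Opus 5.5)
We need to prove Lemma \ref{lem: canonical extension}: a local computation near a general point of each divisor $E(\calF)$, and more generally near points of normal crossings.

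The plan is to work locally on $Y(\calA)$, near a point $y$ lying on exactly the divisors $E(\calF_1),\dots,E(\calF_k)$, $\calF_i\in\IF(\calA)^0$. We may also take $y \in U_{H_0}$ for a generic $H_0$. By the normal crossing property in Lemma \ref{lem: wonderful compactification}, we choose local coordinates $z_1,\dots,z_n$ at $y$ with $E(\calF_i)=\{z_i=0\}$.

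\textbf{Local form of the pullbacks.} The key input is that each pulled-back linear form vanishes along exactly the divisors whose flats contain it:
\[
Q_{H,H_0} = u_H\prod_{i:\,H\in\calF_i} z_i,
\]
with $u_H$ a unit near $y$. This follows from the iterated blow-up description of $\pi$. Blowing up the proper transform of $Z(\calF)$ introduces a factor equal to the exceptional coordinate in every $l_H$ with $H\in\calF$, and in no other. Equivalently, $\mathrm{ord}_{E(\calF)}(l_H\circ\pi)=1$ if $H\in\calF$ and $0$ otherwise. This holds because $E(\calF)$ dominates $\PP_{\calF}$, and $l_H$ for $H\in\calF$ factors through the projection to $\CC^{n+1}/V(\calF)$, homogeneous of degree one.

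\textbf{Connection form and residues.} Substituting into the connection gives
\[
\sum_H \alpha(H)\frac{dQ_{H,H_0}}{Q_{H,H_0}} = \sum_i \Big(\sum_{H\in\calF_i}\alpha(H)\Big)\frac{dz_i}{z_i} + \text{holomorphic}.
\]
So on the trivialising section $1$ of $O$ (twisted by $\pi^*O(d(\calA))$, which only changes the chart by a unit), the connection has logarithmic poles with residue $\sum_{H\in\calF_i}\alpha(H)$ along $E(\calF_i)$.

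Next, $\V(\calA)$ is locally generated by $e=\prod_i z_i^{-d(\calF_i)}$. We compute
\[
\nabla e = e\Big(\sum_i\big(\textstyle\sum_{H\in\calF_i}\alpha(H)-d(\calF_i)\big)\frac{dz_i}{z_i}+\text{hol}\Big).
\]
The coefficient of $dz_i/z_i$ should be $\alpha(\calF_i)$, giving residue $\alpha(\calF_i)$ with real part in $[0,1)$. For this to hold, the sign convention must match: the exponent on $e$ has to be chosen so that $\nabla e$ has coefficient $\sum\alpha(H)+d(\calF_i)=\alpha(\calF_i)$. I would recheck this sign against the definition $\V(\calA)=\ldots\otimes O(-\sum d(\calF)E(\calF))$, where sections have poles bounded by $-d$, so the local generator is $z^{d}$. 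With generator $z^{d(\calF_i)}$ the coefficient is indeed $\sum_{H\in\calF_i}\alpha(H)+d(\calF_i)=\alpha(\calF_i)$. Hence $\nabla(\V(\calA))\subseteq\Omega^1(\log D)\otimes\V(\calA)$.

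\textbf{Gluing.} Since the $U_{H_0}$ cover $Y(\calA)$ and the charts glue in the construction of $\mathcal M$, the statement is global.

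\textbf{Main obstacle.} The hard step is the order-of-vanishing claim $\mathrm{ord}_{E(\calF)}(l_H\circ\pi)=[H\in\calF]$ for irreducible $\calF$. I would prove it from Lemma \ref{lem: EF}. Alternatively, inductively along the blow-up sequence, one checks that the proper transform of $\{l_H=0\}$ contains the centre $Z(\calF)$ exactly when $H\in\calF$, with multiplicity one. This holds because $Z(\calF)$ is smooth and $l_H$ is linear, hence of multiplicity one along any linear subspace it contains. Flats not containing $H$ meet the proper transform of $H$ properly by the building-set property.
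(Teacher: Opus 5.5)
Your proposal is correct and follows the paper's proof essentially step for step. Both arguments take local coordinates with $E(\calF_i)=\{z_i=0\}$, use the factorization $Q_{H,H_0}=u_H\prod_{\calF_i\ni H}z_i$, and apply $\nabla$ to the local generator $\prod_i z_i^{d(\calF_i)}$ to get the coefficient $d(\calF_i)+\sum_{H\in\calF_i}\alpha(H)=\alpha(\calF_i)$. Your corrected generator $z^{d(\calF_i)}$ (not $z^{-d(\calF_i)}$) is the right one, and your order-of-vanishing argument supplies a justification for the factorization of $Q_{H,H_0}$ that the paper simply asserts; to make it airtight, also note that $\pi^{-1}(H)\subseteq D(\calA)$, so $Q_{H,H_0}$ has no zeros near $y$ outside the divisors $E(\calF_i)$.
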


\begin{proof}
	For any point $ y $ in $ Y(\calA) $, let $ U_{y} $ be a small neighbourhood of $ y $ and choose a local coordinate $ \{z_{1},\cdots,z_{n}\} $ such that $ D(\calA) $ is locally given by $ z_{1}\cdots z_{m} = 0 $ in $ U_{y} $. Let $ E(\calF_{i}) $ be the irreducible component of $ E $ such that $ E(\calF_{i}) \cap U_{y} = \{z_{i}=0\} $. The identification $ \mathcal{M}|_{U_{y}} \simeq O_{U_{y}}[D(\calA)|_{U_{y}}] $ yields
	\begin{equation*}
		\V(\calA)|_{U_{y}} \simeq O_{U_{y}}(-\sum_{i=1}^{m}d(\calF_{i})E(\calF_{i})) = O_{U_{y}} \cdot \prod\limits_{i=1}^{m}z_{i}^{d(\calF_{i})}.
	\end{equation*}
	
	Fix a generic hyperplane $ H_{0} $ such that $ U_{y} \cap \pi^{-1}H_{0} = \emptyset $ and define $ l_{H_{0}} $, $ Q_{H,H_{0}} $ as above. Then by Formula (\ref{eqn: connection on Y}), we have
	\begin{equation*}
		\nabla|_{U_{y}} \colon  O_{U_{y}}[D(\calA)|_{U_{y}}] \longrightarrow \Omega^{1}_{U_{y}}\otimes O_{U_{y}}[D(\calA)|_{U_{y}}],\ g  \mapsto dg +  g \cdot \sum\limits_{H \in \calA}\alpha(H)\frac{dQ_{H,H_{0}}}{Q_{H,H_{0}}}.
	\end{equation*}

 The restriction of $ Q_{H,H_{0}} $ on $ U_{y} $ has the form
 \begin{equation*}
 	Q_{H,H_{0}}|_{U_{y}} = g_{H}\prod\limits_{\calF_{i} \ni H}z_{i},
 \end{equation*}
 where $ g_{H} $ is a holomorphic function on $ U_{y} $ such that $ g_{H}(y) \neq 0 $. 
 
 So for any holomorphic function $ g $ on some open subset of $ U_{y} $, we have
 \begin{eqnarray*}
 	\nabla(g\cdot \prod\limits_{i=1}^{m}z_{i}^{d(\calF_{i})}) & = & \left(dg + g\cdot \sum\limits_{i=1}^{m}d(\calF_{i})\frac{dz_{i}}{z_{i}}+  g \cdot \sum\limits_{H \in \calA}\alpha(H)\frac{dQ_{H,H_{0}}}{Q_{H,H_{0}}}\right)\cdot \prod\limits_{i=1}^{m}z_{i}^{d(\calF_{i})} \\
 	& = & \left(dg + g\cdot \left(\sum\limits_{i=1}^{m}\left(d(\calF_{i})+\sum\limits_{H \in \calF_{i}}\alpha(H)\right)\frac{dz_{i}}{z_{i}}+  \sum\limits_{H \in \calA}\alpha(H)\frac{dg_{H}}{g_{H}}\right)\right)\cdot \prod\limits_{i=1}^{m}z_{i}^{d(\calF_{i})} \\
 		& = & \left(dg + g\cdot \left(\sum\limits_{i=1}^{m}\alpha(\calF_{i})\frac{dz_{i}}{z_{i}}+  \sum\limits_{H \in \calA}\alpha(H)\frac{dg_{H}}{g_{H}}\right)\right)\cdot \prod\limits_{i=1}^{m}z_{i}^{d(\calF_{i})}.
 \end{eqnarray*}
 Therefore, the image of $ \V(\calA) $ under $ \nabla $ is contained in $ \Omega_{Y(\calA)}^{1}(\log D(\calA)) \otimes \V(\calA) $. So the meromorphic connection $ (\mathcal{M},\nabla) $ has logarithmic poles along $ D(\calA) $ with respect to $ \V(\calA) $. Furthermore, for any $ \calF \in \IF(\calA)^{0} $, take $ y $ as a generic point on $ E(\calF) $. Then the equation above implies that $ \Res_{E(\calF)}(\V(\calA)) = \alpha(\calF) $.
\end{proof}

\begin{cor}\label{cor: classifying extension with log poles}
	Let $ (\V_{0},\nabla_{0}) $ be an extension of $ \LL $ on $ Y(\calA) $ with logarithmic poles along $ D(\calA) $. Then there exists a unique map $ \mu \colon \IF(\calA)^{0} \rightarrow \ZZ $ such that $ \V_{0} \simeq \V(\calA) \otimes O(-\sum_{\calF \in \IF(\calA)^{0}}\mu(\calF)E(\calF)) $ as connections.
\end{cor}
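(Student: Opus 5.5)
The plan is to use the local structure of logarithmic connections near each component $ E(\calF) $ together with Lemma \ref{lem: canonical extension}. Let $ (\V_{0},\nabla_{0}) $ be an extension of $ \LL $ on $ Y(\calA) $ with logarithmic poles along $ D(\calA) $. Both $ \V_{0} $ and $ \V(\calA) $ are locally free of rank one. Their restrictions to $ M(\calA) $ are the flat bundle $ \LL\otimes O $, and the connections agree there. By Deligne's uniqueness of the regular meromorphic extension, both embed in the same meromorphic connection $ (\mathcal{M},\nabla) $ as $ O $-lattices stable under $ \nabla $.

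Take two rank-one lattices in $ \mathcal{M} $ that coincide on $ M(\calA) $. One is obtained from the other by tensoring with $ O(-\sum_{\calF}\mu(\calF)E(\calF)) $ for a unique integer-valued $ \mu $ on the components of $ D(\calA) $. Concretely, $ \V_{0}\otimes\V(\calA)^{-1} $ is a line bundle with a meromorphic section that is nowhere vanishing and holomorphic on $ M(\calA) $. Its divisor is therefore supported on $ D(\calA) $. By Lemma \ref{lem: wonderful compactification}, the components of $ D(\calA) $ are exactly the $ E(\calF) $ with $ \calF\in\IF(\calA)^{0} $, which gives $ \mu\colon \IF(\calA)^{0}\to\ZZ $. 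To make this precise, I would work locally in the coordinates used in the proof of Lemma \ref{lem: canonical extension}. There $ \V(\calA)=O\cdot\prod z_{i}^{d(\calF_{i})} $, and $ \V_{0} $ is generated by some section $ h\prod z_i^{d(\calF_i)} $ with $ h $ meromorphic and invertible away from $ \{z_1\cdots z_m=0\} $. Hence $ h=u\prod z_i^{-\mu_i} $ with $ u $ a unit, and the exponents $ \mu_{i} $ glue to the global $ \mu $.

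The isomorphism is then an equality of subsheaves of $ \mathcal{M} $, so it automatically respects the connections. It remains to check that the twisted lattice is indeed logarithmic. The computation in Lemma \ref{lem: canonical extension} shows that multiplying by $ \prod z_i^{-\mu_i} $ only shifts the residues by $ -\mu_i $, so any $ \mu $ yields a logarithmic lattice. In particular, the given $ \V_{0} $ is consistent with this description.

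Uniqueness holds because $ O(-\sum\mu(\calF)E(\calF))\simeq O(-\sum\mu'(\calF)E(\calF)) $ as connections forces equal residues. By the same computation the residue along $ E(\calF) $ is $ \alpha(\calF)+\mu(\calF) $, so $ \mu=\mu' $. The main point needing care is justifying the common embedding in $ \mathcal{M} $, i.e.\ that an isomorphism of connections on $ M(\calA) $ extends meromorphically. This is exactly the regularity and uniqueness in Deligne's correspondence, since logarithmic connections are regular.
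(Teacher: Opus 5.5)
Your proof is correct, but for existence it takes a different route from the paper. Both arguments begin the same way: uniqueness of the regular meromorphic extension places $\V_{0}$ inside $(\mathcal{M},\nabla)$ with $\nabla_{0}=\nabla$.

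The paper then works entirely with residues:
\begin{enumerate}
\item it sets $\mu(\calF)=\Res_{E(\calF)}(\V_{0})-\alpha(\calF)$, which is an integer because both residues exponentiate to $m(\calF)$;
\item it twists $\V_{0}$ by $O(\sum\mu(\calF)E(\calF))$, so that every residue becomes $\alpha(\calF)$, with real part in $[0,1)$;
\item it invokes the uniqueness of Deligne's canonical extension (\cite[Theorem 5.2.17]{hotta2007d}) to identify the twisted sheaf with $\V(\calA)$.
\end{enumerate}

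You instead compare the two lattices directly. The ratio of local generators is a meromorphic function that is a unit off $D(\calA)$, so its divisor is an integral combination of the components $E(\calF)$. Hence $\V_{0}$ equals $\V(\calA)\otimes O(-\sum\mu(\calF)E(\calF))$ as subsheaves of $\mathcal{M}$, and compatibility with the connections is automatic.

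What each route buys:
\begin{itemize}
\item Your route is more elementary and self-contained. It avoids the black-box uniqueness theorem and shows that, in rank one, every $\nabla$-stable invertible lattice of $\mathcal{M}$ is automatically logarithmic and of this form. The logarithmic hypothesis on $\V_{0}$ is needed only to get regularity, hence the embedding into $\mathcal{M}$.
\item The paper's route is shorter given the cited theorem. It also makes the residue description of $\mu$ the primary object, which is what Remark \ref{rmk: residue classify extension} extracts and Lemma \ref{lem: long exact sequences} then uses. Your argument recovers the same description, but only in your uniqueness step.
\end{itemize}

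One cosmetic sign slip. A local generator $u\prod z_{i}^{-\mu_{i}}\cdot\prod z_{i}^{d(\calF_{i})}$ produces $\V(\calA)\otimes O(+\sum\mu(\calF)E(\calF))$, with residues $\alpha(\calF)-\mu(\calF)$. To match the statement, and your own uniqueness computation where the residue is $\alpha(\calF)+\mu(\calF)$, you should write $h=u\prod z_{i}^{\mu_{i}}$.
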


\begin{proof}
	By the uniqueness of $ \mathcal{M} $, we may assume $ \V_{0} \subseteq \mathcal{M} $ and $ \nabla_{0} = \nabla $. Take
	\begin{equation*}
		\mu(\calF) = \Res_{E(\calF)}(\V_{0}) - \alpha(\calF).
	\end{equation*}
	Then $ \mu(\calF)  \in \ZZ $ since $ \exp(-2\pi\sqrt{-1}\alpha(\calF)) = m(\calF) = \exp(-2\pi\sqrt{-1}\Res_{E(\calF)}(\V_{0})) $. Furthermore, take
		\begin{equation*}
		\V_{0}' = \V_{0} \otimes O(\sum\limits_{\calF \in \IF(\calA)^{0}}\mu(\calF)E(\calF)) \subset M.
	\end{equation*}
	Similar to the proof in Lemma \ref{lem: canonical extension}, we have $ (\mathcal{M},\nabla) $ has logarithmic poles along $ E $ with respect to $ \V_{0}' $, and the residue along $ E(\calF) $ with respect to $ \V_{0}' $ is $ \Res_{E(\calF)}(\V_{0}) - \mu(\calF) = \alpha(\calF) $. So by \cite[Theorem 5.2.17]{hotta2007d}, we have $ \V_{0}' \simeq \V(\calA) $ as connections, which implies existence. Uniqueness is obvious.
\end{proof}

\begin{rmk}\label{rmk: residue classify extension}
	For the same reason, given a compact smooth complex manifold $ Y $ and a simple normal crossing divisor $ D $, the extensions of a given complex rank-one local system on $ Y \setminus D $ with logarithmic poles along $ D $ can be classified by their residues uniquely.
\end{rmk}

\subsection{Logarithmic de Rham Complexes}\label{subsection: log de Rham complexes}
We first recall Deligne's algebraic de Rham theorem.
\begin{thm}\label{thm: algebraic de rham thm}\rm{(\cite[Corollary 6.10]{deligne1970equations})}
				Let $ Y $ be a smooth complex variety, $ D $ a normal crossing divisor on $ Y $, $ \LL $ a complex local system  on $  Y \setminus D $, $ (\V,\nabla) $ an extension of $ \LL $ on $ Y $ with logarithmic poles along $ D $. If for each irreducible component of $ D $, no eigenvalue of the residue is a positive integer, then for any $ p \geq 0 $,
			\begin{equation*}
					\HH^{p}(Y,\Omega^{*}_{Y}(\log D)\otimes \V) \simeq H^{p}(Y \setminus D ,\LL).
				\end{equation*}
\end{thm}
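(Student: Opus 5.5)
This is Deligne's comparison theorem, and I would prove it along the lines of \cite{deligne1970equations}. The idea is to identify the logarithmic de Rham complex with $Rj_{*}\LL$, where $j \colon Y\setminus D \hookrightarrow Y$ is the inclusion. This is done first in the analytic topology and then transferred to algebraic hypercohomology.

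\textbf{Step 1: reduction to a local statement.} On $Y^{an}$ consider the natural map of complexes
\begin{equation*}
\Omega^{*}_{Y}(\log D)\otimes \V \longrightarrow j_{*}(\Omega^{*}_{Y\setminus D}\otimes \V) \longrightarrow Rj_{*}(\Omega^{*}_{Y\setminus D}\otimes\V).
\end{equation*}
On $Y\setminus D$ the holomorphic Poincaré lemma gives $\Omega^{*}_{Y\setminus D}\otimes\V \simeq \LL$, so the target is $Rj_{*}\LL$. Taking hypercohomology of $Rj_{*}\LL$ gives $H^{p}(Y\setminus D,\LL)$. It therefore suffices to show that this composite is a quasi-isomorphism. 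That is a stalkwise statement at a point $y\in D$.

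\textbf{Step 2: local computation (main obstacle).} Choose coordinates $z_{1},\dots,z_{n}$ centred at $y$ with $D=\{z_{1}\cdots z_{m}=0\}$, and trivialize $\V$ so that $\nabla = d + \sum_{i} R_{i}\,dz_{i}/z_{i} + (\text{holomorphic})$. By a gauge transformation one may take the $R_{i}$ constant and commuting; in the rank-one case of this paper they are just the scalars $\alpha(\calF_{i})$. Then:
\begin{enumerate}
\item The stalk of the log complex is the Koszul complex of the commuting operators $z_{i}\partial_{i}+R_{i}$ for $i\le m$ (and $\partial_{i}$ for $i>m$) acting on $\V_{y}$.
\item Expand a germ in monomials $z^{k}$ with $k\in\NN^{m}$. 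On the $z^{k}$-component the operator $z_{i}\partial_{i}+R_{i}$ acts as $k_{i}+R_{i}$.
\item The eigenvalue hypothesis (no eigenvalue of the residue is a positive integer, with the sign convention fixed by \eqref{eqn: connection on Y}) makes $k_{i}+R_{i}$ invertible whenever $k_{i}>0$. Hence the Koszul complex on every component with $k\neq 0$ is acyclic.
\item The delicate point is convergence: one must show that the formal homotopy inverting $k_{i}+R_{i}$ preserves convergent power series. This follows from the estimate $\|(k_{i}+R_{i})^{-1}\|=O(1/k_{i})$.
\item What remains is the $k=0$ part. This is the finite-dimensional Koszul complex of $(R_{1},\dots,R_{m})$ on the fibre (tensored with the de Rham complex in the variables $z_{i}$, $i>m$, which is acyclic by the Poincaré lemma).
\end{enumerate}

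\textbf{Step 3: matching with the topological side.} The stalk of $Rj_{*}\LL$ at $y$ is the cohomology of a punctured polydisc, which is homotopic to $(S^{1})^{m}$, with coefficients in $\LL$. This is the Koszul complex of $(T_{i}-1)$, where $T_{i}=\exp(-2\pi\sqrt{-1}R_{i})$ are the monodromies. On generalized eigenspaces, $R_{i}$ has eigenvalue $0$ exactly when $T_{i}$ has eigenvalue $1$. Moreover, on each generalized eigenspace the map $R_{i}\mapsto T_{i}-1$ differs by an invertible factor commuting with all the other operators. So the two Koszul complexes are quasi-isomorphic, compatibly with the map of Step 1.

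\textbf{Step 4: algebraic versus analytic.} Finally, compare algebraic hypercohomology with analytic hypercohomology. When $Y$ is proper, as in our application where $Y=Y(\calA)$ is projective, this follows from GAGA applied to each coherent term $\Omega^{p}_{Y}(\log D)\otimes\V$, together with the hypercohomology spectral sequence. For non-proper $Y$ one first passes to a good compactification, as Deligne does.

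I expect the real work to be in Step 2, namely the convergence estimate and the bookkeeping that makes the local quasi-isomorphism compatible with the map from Step 1.
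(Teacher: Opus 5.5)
\textbf{There is a genuine gap: Step 2(3) uses the eigenvalue hypothesis in the wrong direction.} The paper gives no proof of this statement; it quotes Deligne. So the question is whether your sketch is sound. Steps 1 and 4 are fine for the projective $Y$ the paper needs.

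In the paper's convention, a local frame gives $\nabla=d+\sum_i R_i\,dz_i/z_i$ with residue $R_i$. On the $z^{k}$-component the operator is $k_i+R_i$. Invertibility for all $k_i>0$ requires that $R_i$ have no eigenvalue in $\{-1,-2,\dots\}$, but the hypothesis only excludes $\{1,2,\dots\}$. Step 3 fails for the same reason: $T_i=\exp(-2\pi\sqrt{-1}R_i)$ has eigenvalue $1$ for \emph{every} integer eigenvalue of $R_i$, not only for $0$.

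A concrete counterexample to your intermediate claims: take a disc with $D=\{0\}$, $\LL=\CC$, $\V=\mathcal{O}(D)$ and $\nabla=d$. The frame $z^{-1}$ has residue $-1$, which the hypothesis allows. Your $k=0$ piece is $\CC\xrightarrow{-1}\CC$, which is acyclic. All of $H^{*}(S^{1},\CC)$ sits in the $k=1$ piece $\CC\xrightarrow{0}\CC$, namely the section $1=z\cdot z^{-1}$ and the form $dz/z$.

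Reversing the sign convention does not help. It makes Step 2(3) true, but then the stated hypothesis admits $z\mathcal{O}$ with $\nabla=d$, where the conclusion fails. As written, your argument proves the theorem only when no residue eigenvalue is a nonzero integer.

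\textbf{The missing idea.} The hypothesis governs the comparison of the logarithmic complex with the meromorphic complex $\Omega^{*}_{Y}(*D)\otimes\V$, whose stalk contains $z^{k}$ for all $k\in\ZZ^{m}$. The cokernel of the inclusion consists of the components with some $k_i<0$. These are acyclic exactly because $k_i+R_i$ is invertible for $k_i<0$, which is precisely what the hypothesis gives.

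Deligne phrases this intrinsically. $\V(jD_i)$ has residue $R_i-j$ along $D_i$, and the graded pieces of the pole-order filtration are cones of $R_i-j$, $j\geq 1$, on a log complex on $D_i$. Since $\Omega^{*}_{Y}(*D)\otimes\V$ depends only on $\LL$, you can then compute it using Deligne's canonical lattice, whose residue eigenvalues have real parts in $[0,1)$. There your Steps 2--3 are valid.

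This route also repairs your normal-form step. In general rank, a holomorphic gauge to constant residues need not exist when residue eigenvalues differ by positive integers. For example, $d+\begin{pmatrix}0&0\\ z&-1\end{pmatrix}\frac{dz}{z}$ satisfies the hypothesis but has nontrivial unipotent monodromy, so it cannot be gauged to a constant residue conjugate to $\mathrm{diag}(0,-1)$. For the canonical lattice, by contrast, the residues are constant and commuting by construction.

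For what it is worth, every use of this theorem in the paper has residues $\alpha(\calF)$, or $\alpha(\calF)+1$ with $\alpha(\calF)\notin\ZZ$. So only $0$ or non-integers occur, and your restricted version would suffice for the paper, though not for the statement as given.
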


Returning to the setting of hyperplane arrangements, we focus on extensions of $ \LL $ of the form
\begin{equation*}
	\V(\calA,R) = \V(\calA) \otimes O(-\sum\limits_{\calF \in R}E(\calF)),
\end{equation*}
where $ R $ is a subset of $ \IF(\calA)^{0} $. Similar to the proof in Lemma \ref{lem: canonical extension}, the residue along $ E(\calF) $ with respect to $ \V(\calA,R) $ is exactly
\begin{equation}\label{eqn: residue of V}
	\Res_{E(\calF)}\V(\calA,R) = \begin{cases}
		\alpha(\calF)+1, & \calF \in R, \\
		\alpha(\calF), & \calF \notin R. 
	\end{cases}
\end{equation}

In particular, for any $ T \subseteq \RF(\calA,\LL) \setminus R $, the pair $ (\V(\calA,R),\nabla) $ is a connection with logarithmic poles along $ D(\calA) - \sum_{\calF \in T}E(\calF) $. We denote the hypercohomology of its logarithmic de Rham complex by 

\begin{equation*}
	H^{p}(\calA,R,T) = \HH^{p}\left(Y(\calA),\Omega^{*}_{Y(\calA)}\left(\log\left( D(\calA) - \sum\limits_{\calF \in T}E(\calF)\right)\right)\otimes \V(\calA,R)\right).
\end{equation*}
We omit $ T $ if $ T = \emptyset $. In that case, we also omit $ R $ if $ R = \emptyset $.

We further study the relationships among these hypercohomology groups. 
\begin{lem}\label{lem: long exact sequences}
		For any subset $ R \subset \IF(\calA)^{0} $ and any resonant flat $ \calF $ such that $ \calF \notin R $ and $ 	R \cap \RF(\calA,\LL)^{\calF} = \emptyset $, there exist two long exact sequences of $ \CC $-vector spaces:
	\begin{equation}\label{eqn: long exact sequence 1}
		\begin{split}
					& \cdots \longrightarrow H^{p}(\calA,R,\{\calF\}) \longrightarrow H^{p}(\calA,R) \longrightarrow \bigoplus\limits_{q+r=p-1}H^{q}(\calA^{\calF}) \otimes H^{r}(\calA_{\calF},R_{\calF}) \\ 
					& \longrightarrow H^{p+1}(\calA,R,\{\calF\}) \longrightarrow H^{p+1}(\calA,R) \longrightarrow \cdots
		\end{split}
	\end{equation}
	and
	\begin{equation}\label{eqn: long exact sequence 2}
			\begin{split}
			& \cdots \longrightarrow H^{p}(\calA,R \cup \{\calF\}) \longrightarrow H^{p}(\calA,R,\{\calF\}) \longrightarrow \bigoplus\limits_{q+r=p}H^{q}(\calA^{\calF}) \otimes H^{r}(\calA_{\calF},R_{\calF}) \\ 
			& \longrightarrow H^{p+1}(\calA,R \cup \{\calF\}) \longrightarrow H^{p+1}(\calA,R,\{\calF\}) \longrightarrow \cdots
			\end{split}
	\end{equation}
\end{lem}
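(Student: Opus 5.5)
Both sequences will come from short exact sequences of logarithmic de Rham complexes on $Y=Y(\calA)$ built around the single divisor $E=E(\calF)$, by taking hypercohomology and identifying the third term through Lemma \ref{lem: EF}. Write $D=D(\calA)$, $D'=D-\sum_{\calG\in T}E(\calG)$ with $T=\emptyset$, and $\V=\V(\calA,R)$. The two standard sequences I will use are the residue sequence
\begin{equation*}
0\to \Omega^{*}_{Y}(\log(D-E))\otimes\V \to \Omega^{*}_{Y}(\log D)\otimes\V \xrightarrow{\Res_E} \Omega^{*-1}_{E}(\log (D-E)|_E)\otimes \V|_E \to 0,
\end{equation*}
which is exact as complexes because the residue of $\nabla$ on $\V$ along $E$ is $\alpha(\calF)=0$ by \eqref{eqn: residue of V}, since $\calF\notin R$ and $m(\calF)=1$. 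The second is the twisting sequence
\begin{equation*}
0\to \Omega^{*}_{Y}(\log D)\otimes\V(-E)\to \Omega^{*}_{Y}(\log (D-E))\otimes\V\to \Omega^{*}_{E}(\log(D-E)|_E)\otimes\V|_E\to 0 .
\end{equation*}
Here I will check locally, with $E=\{z_1=0\}$, that the kernel of restriction from $\Omega^k(\log(D-E))$ to $E$, tensored with $\V$, is exactly $\Omega^k(\log D)\otimes \V(-E)$. This holds because $z_1\,\Omega^k(\log D)=z_1\Omega^k(\log(D-E))+dz_1\wedge\Omega^{k-1}(\log(D-E))$. I also need $\nabla$ to preserve the subcomplex and induce the connection on $E$, which is possible since the residue vanishes, so $\V$ has an honest connection across $E$ there. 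Since $\V(-E)=\V(\calA,R\cup\{\calF\})$, the long exact sequences in hypercohomology give \eqref{eqn: long exact sequence 1} and \eqref{eqn: long exact sequence 2}, provided the $E$-term has hypercohomology $\bigoplus_{q+r=\bullet} H^{q}(\calA^{\calF})\otimes H^{r}(\calA_{\calF},R_{\calF})$, with the degree shift by one in the first case.

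To compute the $E$-term I use Lemma \ref{lem: EF}: $E\simeq Y^{\calF}\times Y_{\calF}$, where $Y^{\calF}=Y(\calA^{\calF};\IF(\calA)^{\calF})$ and $Y_{\calF}=Y(\calA_{\calF})$. Under this identification $(D-E)|_E$ is the union of the pullbacks of the two boundary divisors, so $\Omega^{*}_{E}(\log(D-E)|_E)$ is the exterior tensor product of the two factor log complexes. The line bundle $\V|_E$ should likewise be an external tensor product $\V_1\boxtimes\V_2$ of log extensions of the induced local systems. By Remark \ref{rmk: residue classify extension}, it suffices to compute residues, which by \eqref{eqn: residue of V} and the intersection description in Lemma \ref{lem: EF} are as follows:
\begin{enumerate}
\item along $E(\calG^{\calF})\times Y_{\calF}$ the residue is $\alpha(\calG)$, because the hypothesis $R\cap\RF(\calA,\LL)^{\calF}=\emptyset$ excludes $+1$ shifts on resonant $\calG$;
\item along $Y^{\calF}\times E(\calG_{\calF})$ the residue is $\alpha(\calG)$ or $\alpha(\calG)+1$ according to whether $\calG\in R$.
\end{enumerate}
The hypothesis in (1) is used only there; for nonresonant $\calG$ in $(\IF(\calA)^{\calF})^{0}$, a shift would not affect the cohomology, but I would rather avoid this issue. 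Then the Künneth formula for hypercohomology of external tensor products of complexes of coherent sheaves on a product of projective varieties gives $H^{q}(\calA^{\calF};\IF(\calA)^{\calF})\otimes H^{r}(\calA_{\calF},R_{\calF})$.

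The main obstacle is the first factor. It is defined using the building set $\IF(\calA)^{\calF}$, not $\IF(\calA^{\calF})$, and the extension is the canonical one with residues having real part in $[0,1)$. I plan to resolve it by Theorem \ref{thm: algebraic de rham thm}. No residue in (1) is a positive integer, so its hypercohomology equals $H^{q}(M(\calA^{\calF}),\LL)$, independent of the building set. The same theorem applied to $Y(\calA^{\calF})$ shows that $H^{q}(\calA^{\calF})$ computes the same group. This identifies the first factor with $H^q(\calA^{\calF})$ as the statement writes it, completing both sequences.
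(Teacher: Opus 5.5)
Your proposal is correct and follows essentially the same route as the paper: the same two Esnault--Viehweg short exact sequences along $E(\calF)$, the product decomposition of Lemma~\ref{lem: EF} with $\V|_{E(\calF)}\simeq \V_1\boxtimes\V_2$ obtained by matching residues, Deligne's theorem to identify the first factor with $H^q(\calA^{\calF})$ despite the building set $\IF(\calA)^{\calF}$, and the K\"unneth formula. One small correction: in your item (1) the residue along $E(\calG^{\calF})\times Y_{\calF}$ is $\alpha(\calG)+1$ whenever $\calG\in R$ is nonresonant. This cannot be avoided, since $R$ may contain such flats, but it is harmless: then $m(\calG)\neq 1$, so $\alpha(\calG)\notin\ZZ$, and no residue is a positive integer, which is exactly how the paper handles it.
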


\begin{proof}
	
	For simplicity, let $ Y = Y(\calA) $, $ D = D(\calA) $, $ E_{0} = E(\calF) $, and $ \V = \V(\calA,R) $.
	
	As shown in \cite[Properties 2.3]{esnault1992lectures}, for any $ p \in \ZZ $, there are two short exact sequences of sheaves:
	\begin{equation*}
		0 \longrightarrow \Omega^{p}_{Y}(\log(D - E_{0}))\otimes \V \longrightarrow \Omega^{p}_{Y}(\log D)\otimes \V \longrightarrow \Omega^{p-1}_{E_{0}}(\log(D - E_{0})|_{E_{0}})\otimes \V|_{E_{0}} \longrightarrow 0
	\end{equation*}
	and
	\begin{equation*}
		0 \longrightarrow \Omega^{p}_{Y}(\log D)\otimes \V \otimes O_{Y}(-E_{0}) \longrightarrow  \Omega^{p}_{Y}(\log(D - E_{0}))\otimes \V \longrightarrow \Omega^{p}_{E_{0}}(\log(D - E_{0})|_{E_{0}})\otimes \V|_{E_{0}} \longrightarrow 0.
	\end{equation*}

	Since $ \calF \in \RF(\calA,\LL) \setminus R $, the residue of $  \V $ along $  E_{0} $ is $ 0 $. So the restriction of $ (\V,\nabla) $ on $ E_{0} $ defines an extension of $ \LL|_{E(\calF)^{\circ}} $ with logarithmic poles, whose logarithmic de Rham complex is exactly $ \Omega^{*}_{E_{0}}(\log(D - E_{0})|_{E_{0}})\otimes \V|_{E_{0}} $. Then it follows directly from the definition that the maps in above short exact sequences induce morphisms between complexes. So by taking the long exact sequences of hypercohomology, it suffices to prove that for any $ p \geq 0 $,
	\begin{equation}\label{eqn: iso on hypercohomology given by restriction}
		\HH^{p}(E_{0},\Omega^{*}_{E_{0}}(\log(D - E_{0})|_{E_{0}})\otimes \V|_{E_{0}}) \simeq \bigoplus\limits_{q+r=p}H^{q}(\calA^{\calF}) \otimes H^{r}(\calA_{\calF},R_{\calF}).
	\end{equation}

	 By Lemma \ref{lem: EF}, we have $ E_{0} = E(\calF) \simeq Y(\calA^{\calF};\IF(\calA)^{\calF}) \times Y(\calA_{\calF}) $. For simplicity, let $ Y_{1} = Y(\calA^{\calF};\IF(\calA)^{\calF}) $, $ Y_{2} = Y(\calA_{\calF})$, $ D_{1} = D(\calA^{\calF};\IF(\calA)^{\calF}) $, and $ D_{2} = D(\calA_{\calF}) $. Let $ \V_{i}(i=1,2) $ be the restriction of $ \V $ on $ Y_{i} \subseteq E_{0} $. Then by remark \ref{rmk: residue classify extension}, we have $ \V|_{E_{0}} \simeq \V_{1} \boxtimes \V_{2} $, which induces an isomorphism between complexes
	 \begin{equation}\label{eqn: iso on complexes given by restriction}
	 	\Omega^{*}_{E_{0}}(\log(D - E_{0})|_{E_{0}})\otimes \V|_{E_{0}} \simeq 	\left(\Omega^{*}_{Y_{1}}(\log D_{1})\otimes \V_{1}\right) \boxtimes \left(\Omega^{*}_{Y_{2}}(\log D_{2})\otimes \V_{2}\right).
	 \end{equation}
	 
	  Furthermore, by Formula (\ref{eqn: residue of V}) we have for any $ \calG \in (\IF(\calA)^{\calF})^{0} $,
	\begin{equation*}
		\Res_{E(\calG^{\calF})}\V_{1} = \Res_{E(\calG) \cap  E_{0}}\V|_{E_{0}}  = 	\Res_{E(\calG)}\V = \begin{cases}
			\alpha(\calG), & \calG \notin R, \\
				\alpha(\calG)+1, & \calG \in R.
		\end{cases}
	\end{equation*}
	Since $ R \cap \RF(\calA,\LL)^{\calF} = \emptyset $ and $ \re \alpha(\calG) \in [0,1) $, no residue of $ \V_{1} $ is a positive integer. So by Theorem \ref{thm: algebraic de rham thm} we have $\HH^{q}(Y_{1},\Omega^{*}_{Y_{1}}(\log E_{1})\otimes \V_{1}) \simeq H^{q}(M(\calA^{\calF}),\LL) \simeq H^{q}(\calA^{\calF}) $. On the other hand, by Formula (\ref{eqn: residue of V}),  we have for any $ \calG \in  (\IF(\calA)_{\calF})^{0} $,
	\begin{equation*}
		\Res_{E(\calG_{\calF})}\V_{2} = \Res_{E(\calG) \cap  E_{0}}\V|_{E_{0}}  = \Res_{E(\calG_{\calF})}\V(\calA_{\calF},R_{\calF}). 
	\end{equation*}
 So by Remark \ref{rmk: residue classify extension} we have $ \V_{2} \simeq \V(\calA_{\calF};R_{\calF}) $ as connections. So Formula (\ref{eqn: iso on hypercohomology given by restriction}) follows directly from the isomorphism (\ref{eqn: iso on complexes given by restriction}) by the generalized K\"unneth formula.
\end{proof}

\section{Nonresonance Theorems}\label{section: vanishing}

In this section, we prove our main theorems. The proof relies on two lemmas. Lemma \ref{lem: vanishing lemma} yields a direct proof of Theorem \ref{main: n-vanishing}, which in turn implies Theorem \ref{main: one point}. Lemma \ref{lem: dimension shift} has two applications: a restriction argument yields Proposition \ref{prop: k-vanishing given by higher ranked resonant flats}, and a lifting technique allows us to prove Theorem \ref{main: 2-vanishing}.

 Throughout this section, let $ \calA $ be a hyperplane arrangement in $ \PP^{n} $ and $ \LL $ a complex rank-one local system on $ M(\calA) $ with monodromy map $ m \colon \calA \rightarrow \CC^{\times} $. We keep all notation from Section \ref{section: preliminary}. 
 
 For any subset $ R \subseteq \IF(\calA)^{0} $, we call the pair $ (\calA,R) $ \textbf{$ k $-vanishing} if $ H^{p}(\calA,R) = 0 $ for any $ p < k $. In particular, when $ R = \emptyset $, we omit $ R $. By Theorem \ref{thm: algebraic de rham thm},  we have $ H^{p}(M(\calA),\LL) = 0 $ for any $ p < k $ if and only if there exists some subset $ R \subseteq \IF(\calA)^{0} $ such that $ R \cap \RF(\calA,\LL) = \emptyset $ and $ (\calA,R) $ is $ k $-vanishing. Furthermore, $ H^{p}(\calA,\LL) = 0 $ holds for all $ p > n $ by Artin's vanishing theorem. Thus, the pair $ (\calA,\LL) $ is nonresonant if and only if there exists some subset $ R \subseteq \IF(\calA)^{0} $ such that $ R \cap \RF(\calA,\LL) = \emptyset $ and $ (\calA,R) $ is $ n $-vanishing.

\subsection{Main Tools}\label{subsection: main tools}

The following vanishing lemma is a corollary of \cite[Theorem 6.2]{esnault1992lectures}, which is implicit in the proof of \cite[Theorem 1.5]{xie2025n}. Here we include a proof for readers' convenience.

\begin{lem}\label{lem: vanishing lemma}
	Suppose that $ \calA \neq \emptyset $. Let $ R $ be a subset of $  \IF(\calA)^{0} $. If there exists a map $ \delta \colon \calA \rightarrow \QQ $ such that
	\begin{enumerate}
		\item the sum $ \sum\limits_{H \in \calA}\delta(H) = 0 $,
		\item for any $ \calF \in \IF(\calA)^{0} $ such that $ \re \alpha(\calF) = 0 $, the sum $ \sum\limits_{H \in \calF}\delta(H) \neq 0 $ , and
		\item $ R = \{\calF \in \IF(\calA)^{0} \mid  \sum_{H \in \calF}\delta(H)< 0 \text{ and } \re \alpha(\calF) = 0 \}$,
	\end{enumerate}
	then $ (\calA,R) $ is $ n $-vanishing.
\end{lem}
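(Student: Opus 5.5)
The plan is to reduce the statement to the Esnault--Viehweg vanishing theorem \cite[Theorem 6.2]{esnault1992lectures}. That theorem concerns the logarithmic Dolbeault groups $H^{b}(Y(\calA),\Omega^{a}_{Y(\calA)}(\log D(\calA))\otimes \mathcal{L})$ for an invertible sheaf $\mathcal{L}$ with $\mathcal{L}^{N}\simeq O(\sum a_{j}D_{j})$ and all $0<a_{j}<N$. The strategy is to show that every such group vanishes for $a+b<n$ when $\mathcal{L}$ is the underlying invertible sheaf of $\V(\calA,R)$. The Hodge-to-de Rham (``stupid filtration'') spectral sequence
\begin{equation*}
E_{1}^{a,b}=H^{b}(Y(\calA),\Omega^{a}_{Y(\calA)}(\log D(\calA))\otimes \V(\calA,R))\Rightarrow H^{a+b}(\calA,R)
\end{equation*}
then forces $H^{p}(\calA,R)=0$ for $p<n$. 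This step uses no degeneration: vanishing of all $E_{1}$ terms in total degree $<n$ is enough.

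\textbf{Step 1: a real perturbation of the residues.} Everything in the $E_{1}$ terms depends only on the invertible sheaf $\V(\calA,R)$, not on $\nabla$. That sheaf is $\pi^{*}O_{\PP^{n}}(d(\calA))\otimes O(-\sum_{\calF}(d(\calF)+[\calF\in R])E(\calF))$, and it is determined by the integer data $d(\calF)$ and $R$. So I will realize the same sheaf as the canonical-type extension of a \emph{unitary} rank-one system with real residues. For small rational $\varepsilon>0$, put
\begin{equation*}
\beta(H)=\re\alpha(H)+\varepsilon\delta(H),\qquad \beta(\calF)=\sum_{H\in\calF}\beta(H)+d(\calF)+[\calF\in R].
\end{equation*}
Condition (1) gives $\sum_{H\in\calA}\beta(H)=-d(\calA)$, so the global degree is unchanged. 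Next, fix $\calF\in\IF(\calA)^{0}$ and compare $\beta(\calF)$ with $\re\alpha(\calF)+\varepsilon\sum_{H\in\calF}\delta(H)$, adjusted by $[\calF\in R]$.
\begin{itemize}
\item If $\re\alpha(\calF)\in(0,1)$, then $\beta(\calF)\in(0,1)$ for $\varepsilon$ small.
\item If $\re\alpha(\calF)=0$ and $\calF\notin R$, then condition (3) gives $\sum_{\calF}\delta>0$, so $\beta(\calF)$ is small and positive.
\item If $\re\alpha(\calF)=0$ and $\calF\in R$, then $\sum_{\calF}\delta<0$ by condition (3), and the shift by $1$ puts $\beta(\calF)$ just below $1$.
\end{itemize}
Condition (2) is exactly what excludes the borderline case $\sum_{\calF}\delta=0$. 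Likewise each $\beta(H)\in(0,1)$ for small $\varepsilon$ (if $\re\alpha(H)=0$ and $\delta(H)=0$ one can first perturb $\delta$ slightly while preserving (1)--(3), since those conditions are open). Hence, by the computation in Lemma \ref{lem: canonical extension}, a suitable power $\V(\calA,R)^{N}$ equals $O_{Y(\calA)}\left(-\sum_{j}a_{j}D_{j}\right)$ for some ordering $D_{1},D_{2},\dots$ of the components $\pi^{*}H$, $E(\calF)$ and a generic $\pi^{*}H_{0}$, with all coefficients satisfying $0<a_{j}<N$.

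\textbf{Step 2: apply Esnault--Viehweg.} With all coefficients strictly inside $(0,N)$, the dual form of \cite[Theorem 6.2]{esnault1992lectures} (log Akizuki--Nakano for $\mathcal{L}^{-1}$ with $\mathcal{L}^{N}=O(\sum a_{j}D_{j})$) gives $H^{b}(\Omega^{a}(\log D)\otimes\mathcal{L}^{-1})=0$ for $a+b<n$. I still need to check that the relevant $\mathcal{L}$, namely a multiple of $\pi^{*}O(1)$ twisted by the fractional exceptional part, satisfies the positivity hypothesis. I expect this check to be the main obstacle.

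My first attempt at the positivity check is to use that $\pi^{*}O(1)$ is big and nef on the iterated blow-up $Y(\calA)$. The exceptional divisors all appear with strictly fractional coefficients, so \cite[Theorem 6.2]{esnault1992lectures} in its ``$\mathcal{L}^{N}(-B)$ big and nef'' form should apply. Should this fail, the fallback is to choose $\varepsilon$ (and the ordering of blow-ups) so that $\pi^{*}O(k)-\sum c_{\calF}E(\calF)$ is actually ample for small positive $c_{\calF}$, which is standard for wonderful compactifications. In either version the nonemptiness of $\calA$ is needed so that every coefficient on $\pi^{*}H$ is genuinely fractional. Once the vanishing is established, the spectral sequence from the first paragraph finishes the proof that $(\calA,R)$ is $n$-vanishing.
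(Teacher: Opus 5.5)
\textbf{Step 1 is fine.} After rescaling $\delta$ by $\varepsilon$, your $\beta(\calF)$ is the paper's $\epsilon(\calF)=\re\alpha(\calF)+\sum_{H\in\calF}\delta(H)+[\calF\in R]$. Condition (1) then yields $\V(\calA,R)^{-N}\simeq O\bigl(\sum_{\calF\in\IF(\calA)^{0}}N\epsilon(\calF)E(\calF)\bigr)$ with $0<N\epsilon(\calF)<N$. Your remark that the $E_{1}$ terms depend only on the invertible sheaf is also what the paper uses implicitly. Two bookkeeping points need fixing:
\begin{itemize}
\item The boundary components are exactly the $E(\calF)$, $\calF\in\IF(\calA)^{0}$, where $E(\{H\})$ is the strict transform of $H$ and $\pi^{*}H=\sum_{\calF\ni H}E(\calF)$. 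So neither $\pi^{*}H$ nor a $\pi^{*}H_{0}$ is a component.
\item $\beta(H)$ is negative when $\{H\}\in R$. What lies in $(0,1)$ is the shifted coefficient of $E(\{H\})$. Also, condition (2) applied to singletons already excludes $\delta(H)=0$ when $\re\alpha(H)=0$, so no extra perturbation is needed.
\end{itemize}

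\textbf{The genuine gap is Step 2.} The vanishing of $H^{b}(Y(\calA),\Omega^{a}(\log D(\calA))\otimes\mathcal{L}^{-1})$ for $a+b<n$, with $\mathcal{L}=\V(\calA,R)^{-1}$, cannot come from positivity of $\mathcal{L}$, for three reasons.

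First, $\mathcal{L}$ is a fixed line bundle, determined by $d(\cdot)$ and $R$. Changing $\varepsilon$ only changes how $\mathcal{L}^{N}$ is written as a boundary divisor, so your fallback of choosing things so that it becomes ample is not available.

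Second, $\mathcal{L}$ need not even be nef. Take all $m(H)$ positive reals, so $\re\alpha\equiv 0$, $d\equiv 0$ and $\mathcal{L}\simeq O(\sum_{\calF\in R}E(\calF))$. Let $\calA\subset\PP^{2}$ consist of a line $H'$, lines $A_i,B_i$ meeting $H'$ in triple points $p_1,p_2,p_3$, and a generic line $C$. Put $\delta(A_i)=\delta(B_i)=1$, $\delta(H')=-3/2$, $\delta(C)=-9/2$. Then (1)--(3) hold with $R=\{\{H'\},\{C\}\}$. On the blow-up at $p_1,p_2,p_3$ one gets $\mathcal{L}\cdot E(\{H'\})=E(\{H'\})^{2}+E(\{C\})\cdot E(\{H'\})=-2+1<0$.

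Third, big and nef would not suffice anyway. It gives only Kawamata--Viehweg vanishing (the case $a=0$), while your spectral sequence needs every $(a,b)$ with $a+b<n$. Nakano-type vanishing fails for big and nef bundles (Ramanujam's example).

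\textbf{The missing idea} is that no positivity is needed. Because $\calA\neq\emptyset$, the complement $Y(\calA)\setminus D(\calA)=M(\calA)$ is affine; that is the real role of nonemptiness, not the fractionality of coefficients. The paper uses \cite[Theorem 6.2]{esnault1992lectures} in its affine-complement form: if $\mathcal{L}^{N}\simeq O(\sum a_jD_j)$ with $0<a_j<N$ and $Y\setminus D$ is affine, then $H^{b}(\Omega^{a}_{Y}(\log D)\otimes\mathcal{L}^{-1})=0$ for all $a+b\neq n$. This follows from three ingredients:
\begin{itemize}
\item $E_1$-degeneration for the unitary local system with residues $a_j/N\in(0,1)$, which identifies $\sum_{a+b=k}h^{b}$ with $\dim H^{k}(Y\setminus D,\mathbb{V})$;
\item Artin vanishing, which handles degrees $>n$;
\item Serre duality, which replaces $\mathcal{L}$ by $\mathcal{L}^{-1}(D)$ with coefficients $N-a_j$ (again of the same type) and so handles degrees $<n$.
\end{itemize}
With that input, your Step 1 and your spectral-sequence argument finish the proof exactly as in the paper.
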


\begin{proof}
	 For any $ \calF \in \IF(\calA)^{0} $, denote by
	\begin{equation*}
		\epsilon(\calF) = \begin{cases}
			 1 + \re \alpha(\calF) + \sum\limits_{H \in \calF}\delta(H)  & \calF \in R \\
			\re \alpha(\calF) + \sum\limits_{H \in \calF}\delta(H)  & \text{otherwise}
		\end{cases}
	\end{equation*}
	
	By rescaling $ \delta $, we may assume that $ \epsilon(\calF) \in (0,1) $ for any $ \calF \in \IF(\calA)^{0} $. Choose $  N \in \NN $ such that $  N\epsilon(\calF)(\calF \in \IF(\calA)^{0}) $ are all integers. Note that $ d(\calA) = -\sum_{H \in \calA} \alpha(H) = -\sum_{H \in \calA} (\re \alpha(H)+\delta(H)) $. Then we have
	\begin{eqnarray*}
		(\V(\calA,R))^{-N} & = & \pi^{*}O_{\PP^{n}}(-Nd(\calA))\otimes O(N\sum\limits_{\calF \in \IF(\calA)^{0}}d(\calF)E(\calF)) \otimes O(N\sum\limits_{\calF \in R}E(\calF))  \\
		&  = & O\left(N \left(\sum\limits_{H \in \calA}(\re \alpha(H)+\delta(H))\pi^{*}H+\sum\limits_{\calF \in \IF(\calA)^{0}}d(\calF)E(\calF)+\sum\limits_{\calF \in R}E(\calF)\right)\right) \\
		& = & O\left(N \left(\sum\limits_{\calF \in \IF(\calA)^{0}}\left(d(\calF)+\sum\limits_{H \in \calA}(\re \alpha(H)+\delta(H))\right)E(\calF)+\sum\limits_{\calF \in R}E(\calF)\right)\right)\\
		&  = & O\left(N \left(\sum\limits_{\calF \in \IF(\calA)^{0}}\left(\re \alpha(\calF)+\sum\limits_{H \in \calA}\delta(H)\right)E(\calF)+\sum\limits_{\calF \in R}E(\calF)\right)\right)\\
		& = & O\left(\sum\limits_{\calF \in \IF(\calA)^{0}}N\epsilon(\calF)E(\calF)\right).
	\end{eqnarray*}
	
	Since $ 0<N\epsilon(\calF)<N $ for any $ \calF \in \IF(\calA)^{0} $, by \cite[Theorem 6.2]{esnault1992lectures} we have for any $ p+q \neq n $,
	\begin{equation*}
 H^{p}(Y(\calA),\Omega_{Y(\calA)}^{q}(\log E(\calA))\otimes \V(\calA,R)) = 0.
	\end{equation*} 
	
	So by the filtered complex spectral squence of $ \Omega(\calA;R) $ we have for any $ p < n $,
	\begin{equation*}
		 H^{p}(\calA,R) =  \HH^{p}(Y(\calA),\Omega^{*}_{Y(\calA)}(\log E(\calA))\otimes \V(\calA,R)) = 0.
	\end{equation*}
\end{proof}

\begin{cor}\label{cor: n-vanishing for R(H)}
	 Suppose that $ \calA \neq \emptyset $ and $ \calA \notin \RF(\calA,\LL) $. Fix a hyperplane $ H \in \calA $. Denote by
	\begin{equation*}
		R(H) = \{\calF \in \IF(\calA)^{0} \mid H \in \calF \text{ and } \re \alpha(\calF) = 0\} .
	\end{equation*}
	Then $ (\calA,R(H)) $ is $ n $-vanishing.
\end{cor}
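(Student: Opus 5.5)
Apply Lemma \ref{lem: vanishing lemma} with a map $\delta$ concentrated on $H$ and spread uniformly over the other hyperplanes. Let $N = |\calA|$ and define $\delta \colon \calA \to \QQ$ by
\begin{equation*}
	\delta(H) = -(N-1), \qquad \delta(H') = 1 \text{ for } H' \neq H .
\end{equation*}
If $N=1$ there are no other hyperplanes, and this case is treated separately at the end. Condition (1) of Lemma \ref{lem: vanishing lemma} holds because $\sum_{H'\in\calA}\delta(H') = -(N-1) + (N-1) = 0$.

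Next compute $\sum_{H'\in\calF}\delta(H')$ for $\calF\in \IF(\calA)^{0}$.
\begin{itemize}
	\item If $H\notin\calF$, the sum equals $|\calF| > 0$, since $\calF$ is nonempty.
	\item If $H\in\calF$, the sum equals $|\calF| - N$. This is $\leq 0$, with equality only when $\calF=\calA$.
\end{itemize}
The key point is that $\calA \notin \IF(\calA)^{0}$ whenever $\calA$ is a flat of rank $n+1$. The hypothesis $\calA\notin\RF(\calA,\LL)$ handles the remaining case: if $\calA$ is irreducible of rank $\le n$, then $m(\calA)=1$ automatically, so $\calA$ would be resonant, which is excluded. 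Hence $\calA\notin \IF(\calA)^{0}$ in all cases. (If $\calA$ is reducible it is not irreducible, so again $\calA\notin\IF(\calA)^{0}$.)

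Therefore every $\calF\in\IF(\calA)^{0}$ containing $H$ is a proper subset of $\calA$, and $\sum_{\calF}\delta<0$ for it. Every $\calF$ not containing $H$ has $\sum_{\calF}\delta>0$. This gives condition (2): the sum is never $0$, in particular not on flats with $\re\alpha(\calF)=0$. It also shows that the set in condition (3) is
\begin{equation*}
	\{\calF\in \IF(\calA)^{0} \mid H\in\calF,\ \re\alpha(\calF)=0\} = R(H).
\end{equation*}
Lemma \ref{lem: vanishing lemma} then yields that $(\calA,R(H))$ is $n$-vanishing.

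The main obstacle is ruling out $\calF=\calA$ from $\IF(\calA)^{0}$, and this is exactly where the hypothesis $\calA\notin\RF(\calA,\LL)$ is used, via $m(\calA)=1$. The degenerate case $N=1$ remains. Then $\calA=\{H\}$ is a flat of rank $1\le n$, which is irreducible with $m(\calA)=1$, so $\calA$ would be resonant. This contradicts the hypothesis when $n\ge1$, so this case does not occur. (For $n=0$ the statement is vacuous, since $0$-vanishing is automatic.)
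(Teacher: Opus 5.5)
Your proposal is correct and is essentially the paper's proof: it uses the same $\delta$ ($\delta(H)=1-\#\calA$ and $\delta=1$ elsewhere) and verifies the three conditions of Lemma \ref{lem: vanishing lemma}, with the hypothesis $\calA\notin\RF(\calA,\LL)$ serving only to exclude $\calF=\calA$ (the one flat with zero sum) from $\IF(\calA)^{0}$. You simply write out details the paper leaves implicit, namely why $\calA\notin\IF(\calA)^{0}$ via $m(\calA)=1$ and the degenerate case $\#\calA=1$.
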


\begin{proof}
	 Define $ \delta \colon \calA \rightarrow \QQ $ by $ \delta(H') = 1 $ for $ H' \neq H $ and $ \delta(H) = 1-\#\calA $. Then for any subset $ S \subseteq \calA $,
	 \begin{equation*}
	 	\sum\limits_{H' \in S}\delta(H') = \begin{cases}
	 		\# S, & H \notin S, \\
	 			\# S - \#\calA,  & H \in S.
	 	\end{cases}
	 \end{equation*}
	 In particular, since $ \calA \notin \RF(\calA,\LL) $, the map $ \delta $ satisfies the condition (1) and (2) in Lemma \ref{lem: vanishing lemma}, and
	 \begin{equation*}
	 	R(H) = \{\calF \in \IF(\calA)^{0} \mid H \in \calF \text{ and } \re \alpha(\calF) = 0 \} = \{\calF \in \IF(\calA)^{0} \mid \sum\limits_{H' \in \calF}\delta(H')< 0 \text{ and } \re \alpha(\calF) = 0 \}.
	 \end{equation*}
	 So the conclusion holds by Lemma \ref{lem: vanishing lemma}.
\end{proof}

Applying the long exact sequence in Lemma \ref{lem: long exact sequences} to the pair $ (\calA,R(H)) $ defined in Corollary \ref{cor: n-vanishing for R(H)}, we have the following $ k $-vanishing criterion.

\begin{cor}\label{cor: k-vanishing given by R(H)}
		Suppose that $ \calA \neq \emptyset $. Let $ k \leq n $ be an integer. Fix a hyperplane $ H \in \calA $. If for any resonant flat $ \calF $ containing $ H $, $ \calA^{\calF} $ is $ (k+1-r(\calF)) $-vanishing, then $ \calA $ is $ k $-vanishing.
\end{cor}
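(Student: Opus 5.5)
The plan is to start from the $n$-vanishing pair $(\calA,R(H))$ of Corollary \ref{cor: n-vanishing for R(H)} and delete the flats of $R(H)$ one at a time, checking at each step with the two long exact sequences of Lemma \ref{lem: long exact sequences} that $k$-vanishing survives. At the end we reach $R=\emptyset$, which is the claim. Since $\re\alpha(\calF)=0$ exactly when $m(\calF)=1$, the set $R(H)$ is precisely the set of resonant flats containing $H$.

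\emph{Degenerate case.} If $\calA\in\RF(\calA,\LL)$, Corollary \ref{cor: n-vanishing for R(H)} does not apply, and this case must be handled first. Then all hyperplanes pass through $Z(\calA)$ and $\calA$ is itself a resonant flat containing $H$, so the hypothesis applies to $\calF=\calA$. I would treat this case by a direct computation: $M(\calA)$ fibres trivially over $M(\calA_{\calA})$ with affine-space fibres. Alternatively, one could note that the statement is only meant for $\calA\notin\RF(\calA,\LL)$. Assume from now on that $\calA\notin\RF(\calA,\LL)$.

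\emph{Order of removal.} List $R(H)=\{\calF_1,\dots,\calF_s\}$ with $r(\calF_1)\ge r(\calF_2)\ge\cdots\ge r(\calF_s)$. Put $R_j=\{\calF_{j+1},\dots,\calF_s\}$, so that $R_0=R(H)$ and $R_s=\emptyset$. We show by induction on $j$ that $(\calA,R_j)$ is $k$-vanishing; the base case $j=0$ holds because $(\calA,R(H))$ is $n$-vanishing and $k\le n$. Fix $j\ge 1$ and write $\calF=\calF_j$, $R'=R_j$, so that $R'\cup\{\calF\}=R_{j-1}$.

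\emph{Checking the hypotheses of Lemma \ref{lem: long exact sequences}.} Clearly $\calF\notin R'$. Next, an element of $\RF(\calA,\LL)^{\calF}$ is, by Lemma \ref{lem: building sets under restrictions}(2), either some $\calG\supsetneq\calF$, or some $\calG$ such that $\calG\cup\calF$ is a flat with $r(\calG)+r(\calF)=r(\calG\cup\calF)$. A flat $\calG\in R'$ contains $H\in\calF$, so only the first type is possible. Such a $\calG$ has rank larger than $r(\calF)$, hence it was removed earlier and is not in $R'$. This gives $R'\cap\RF(\calA,\LL)^{\calF}=\emptyset$.

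\emph{Identifying the localized pair.} Because we remove flats in decreasing rank, every resonant flat strictly contained in $\calF$ and containing $H$ is still in $R'$. Therefore $R'_{\calF}=R(H)$ computed for the localization $\calA_{\calF}$. The arrangement $\calA_{\calF}\subset\PP^{r(\calF)-1}$ has rank $r(\calF)$, so it is not in $\IF(\calA_{\calF})^0$ and in particular is not resonant. Corollary \ref{cor: n-vanishing for R(H)} then shows that $(\calA_{\calF},R'_{\calF})$ is $(r(\calF)-1)$-vanishing.

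\emph{The key vanishing.} We need every term of $\bigoplus_{q+r=p}H^q(\calA^{\calF})\otimes H^r(\calA_{\calF},R'_{\calF})$ to vanish for $p\le k-1$. If $q\le k-r(\calF)$, then $H^q(\calA^{\calF})=0$ by the hypothesis of the statement. Otherwise $q\ge k+1-r(\calF)$, and then $r=p-q\le r(\calF)-2$, so $H^r(\calA_{\calF},R'_{\calF})=0$ by the previous step.

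\emph{Running the two sequences.} Take $p<k$. In sequence (\ref{eqn: long exact sequence 2}), $H^p(\calA,R',\{\calF\})$ sits between $H^p(\calA,R_{j-1})=0$ and a direct-sum term in degree $p$, which vanishes. Hence $H^p(\calA,R',\{\calF\})=0$. In sequence (\ref{eqn: long exact sequence 1}), $H^p(\calA,R')$ sits between $H^p(\calA,R',\{\calF\})=0$ and a direct-sum term in degree $p-1$, which also vanishes. Hence $H^p(\calA,R')=0$, completing the induction.

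The main obstacle I expect is getting the removal order right. It must make $R'_{\calF}$ equal to exactly the $R(H)$ of the localization, while also making $R'$ disjoint from $\RF(\calA,\LL)^{\calF}$; both requirements are met by removing flats in decreasing rank. The only other delicate point is the degenerate case $\calA\in\RF(\calA,\LL)$ set aside at the start.
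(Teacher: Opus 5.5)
\textbf{There is a genuine gap: the identification of $R(H)$ with the resonant flats containing $H$ is false.} Your scheme is the paper's: start from the $n$-vanishing pair $(\calA,R(H))$, remove flats in decreasing rank with both sequences of Lemma \ref{lem: long exact sequences}, and kill the K\"unneth terms using the hypothesis on $\calA^{\calF}$ together with Corollary \ref{cor: n-vanishing for R(H)} for $\calA_{\calF}$.

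The trouble is the normalization of $\alpha(\calF)$. It is fixed only by $\re\alpha(\calF)\in[0,1)$ and $m(\calF)=\exp(-2\pi\sqrt{-1}\alpha(\calF))$. So $\re\alpha(\calF)=0$ means $m(\calF)\in\mathbb{R}_{>0}$, not $m(\calF)=1$; for example, $m(\calF)=2$ gives $\alpha(\calF)=\sqrt{-1}\log 2/(2\pi)$. Hence $R(H)$ in general contains non-resonant flats.

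For such an $\calF_j$ your removal step breaks down. The residue of $\V(\calA,R')$ along $E(\calF_j)$ is $\alpha(\calF_j)\neq 0$, so $H^p(\calA,R',\{\calF_j\})$ is not defined; the paper only allows $T\subseteq\RF(\calA,\LL)\setminus R$. Consequently Lemma \ref{lem: long exact sequences} does not apply. The hypothesis of the statement also gives no information about $\calA^{\calF_j}$. So you can never reach $R=\emptyset$ this way.

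\textbf{The repair} is what the paper does. Remove only the resonant flats containing $H$, still in decreasing rank, and keep $N=\{\calF\in R(H)\mid \alpha(\calF)\neq 0\}$ inside $R$ throughout. Your checks survive unchanged. Elements of $N$ are not in $\RF(\calA,\LL)^{\calF}$. Since $N$ is never removed, $R'_{\calF}$ is still the $R(H)$ of $\calA_{\calF}$.

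The induction then ends at $(\calA,N)$, and you need one more observation. Along $E(\calF)$ with $\calF\in N$, the residue of $\V(\calA,N)$ is $\alpha(\calF)+1$. This has nonzero imaginary part, so it is not a positive integer. Hence Theorem \ref{thm: algebraic de rham thm} still gives $H^p(\calA,N)\simeq H^p(M(\calA),\LL)$.

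\textbf{The degenerate case $\calA\in\RF(\calA,\LL)$} is part of the statement and cannot be set aside as ``not meant''. The point you are missing is that the hypothesis at $\calF=\calA$ concerns the empty arrangement on $Z(\calA)\cong\PP^{n-r(\calA)}$. Its $H^0$ is nonzero, so $(k+1-r(\calA))$-vanishing forces $k\le r(\calA)-1$. Combine this with $M(\calA)\cong M(\calA_{\calA})\times\CC^{n+1-r(\calA)}$ and $(\calA_{\calA})^{\calF_{\calA}}=\calA^{\calF}$. Then you can replace $\calA$ by the non-resonant arrangement $\calA_{\calA}\subset\PP^{r(\calA)-1}$ with the same $k$, and run the main argument there.
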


\begin{proof}
	If $ \calA \in \RF(\calA,\LL) $, then by assumption we have $ \calA^{\calA} $ is $ (k+1-r(\calA)) $-vanishing. Note that $ \calA^{\calA} = \emptyset $. So we have $ r(\calA)-1 \geq k $. Note that $ M(\calA) = M(\calA_{\calA}) \times \CC^{n+1-r(\calA)} $ and $ \calA^{\calF} = (\calA_{\calA})^{\calF_{\calA}} $ for any flat $ \calF $ of $ \calA $. So we can replace $ \calA $ by $ \calA_{\calA} $ and assume that $ \calA \notin \RF(\calA,\LL) $.
	
	Let $ \calF_{1},\cdots,\calF_{m} $ be all resonant flats of $ \calA $ containing $ H $, where $ r(\calF_{1}) \leq \cdots \leq r(\calF_{m}) $. For any $ 0 \leq j \leq m $, define
	\begin{equation*}
		 R_{j} = \{\calF_{i} \mid 1 \leq i \leq j \} \cup \{\calF \in \IF(\calA)^{0} \mid H \in \calF,\ \re \alpha(\calF) = 0 \text{ and } \alpha(\calF) \neq 0 \}.
	\end{equation*}
 We claim that $ (\calA,R_{j}) $ is $ k $-vanishing for any $ j $.
	
	We prove the claim by induction on $ j $. By Corollary \ref{cor: n-vanishing for R(H)}, we have $ (\calA,R_{m}) $ is $ k $-vanishing. Suppose that $ (\calA,R_{j}) $ is $ k $-vanishing for a given $ 1 \leq j \leq m $. By definition, for any $ 1 \leq i < j  $, we have $ \calF_{i} \cap \calF_{j} \supseteq \{H\} \neq \emptyset $ and $ \calF_{i} \nsupseteq \calF_{j} $. So $ R_{j-1} \cap \RF(\calA,\LL)^{\calF_{j}} = \emptyset $. So by Lemma \ref{lem: long exact sequences} we have two long exact sequences:
		\begin{equation*}
		\begin{split}
			& \cdots \longrightarrow H^{p}(\calA,R_{j-1},\{\calF_{j}\}) \longrightarrow H^{p}(\calA,R_{j-1}) \longrightarrow \bigoplus\limits_{q+r=p-1}H^{q}(\calA^{\calF_{j}}) \otimes H^{r}(\calA_{\calF_{j}},(R_{j-1})_{\calF_{j}}) \\ 
			& \longrightarrow H^{p+1}(\calA,R_{j-1},\{\calF_{j}\}) \longrightarrow H^{p+1}(\calA,R_{j-1}) \longrightarrow \cdots
		\end{split}
	\end{equation*}
	and
	\begin{equation*}
		\begin{split}
			& \cdots \longrightarrow H^{p}(\calA,R_{j}) \longrightarrow H^{p}(\calA,R_{j-1},\{\calF_{j}\}) \longrightarrow \bigoplus\limits_{q+r=p}H^{q}(\calA^{\calF_{j}}) \otimes H^{r}(\calA_{\calF_{j}},(R_{j-1})_{\calF_{j}})\\ 
			& \longrightarrow H^{p+1}(\calA,R_{j}) \longrightarrow H^{p+1}(\calA,R_{j-1},\{\calF_{j}\}) \longrightarrow \cdots
		\end{split}
	\end{equation*}

	 Note that $ (\calA,R_{j}) $ is $ k $-vanishing and $ \calA^{\calF_{j}} $ is $ (k+1-r(\calF_{j})) $-vanishing. Furthermore, since $ \calF_{i} \nsubseteq \calF_{j} $ for any $ i > j $, we have
	\begin{equation*}
		(R_{j-1})_{\calF_{j}} = (R(H)\setminus\{R_{j}\})_{\calF_{j}} = \{\calG \in \IF(\calA_{\calF_{j}})^{0} \mid \PP(V(H)/V(\calF_{j})) \in \calG \text{ and } \re \alpha(\calG) = 0 \}.
	\end{equation*}
	Note that $ \calA_{\calF} \notin \RF(\calA_{\calF},\LL) $ for any resonant flat $ \calF \in \RF(\calA,\LL) $. So by Corollary \ref{cor: n-vanishing for R(H)} we have $ (\calA_{\calF_{j}},(R_{j-1})_{\calF_{j}}) $ is $ (r(\calF_{j})-1) $-vanishing. So the long exact sequences above imply that $ (\calA,R_{j-1}) $ is $ k $-vanishing. Then the claim follows by induction. 
	
	In particular, the pair $ (\calA,R_{0}) $ is $ k $-vanishing. Note that $ R_{0} \cap \RF(\calA,\LL) = \emptyset $. So by Theorem \ref{thm: algebraic de rham thm} we have $ H^{p}(M(\calA),\LL) = 0 $ for any $ p < k $, which implies that $ \calA $ is $ k $-vanishing. 
\end{proof}

In the end of this subsection, we consider the case when $ \calA $ contains a hyperplane $ H $ with $ m(H) = 1 $. In this case $ \LL $ can be canonically extended to a local system on $ M(\calA \setminus \{H\}) $, which we still denote by $ \LL $. The following lemma shows how the vanishing property propagates when we delete this hyperplane $ H $.

\begin{lem}\label{lem: add a hyperplane}
	Let $ k \leq n $ be a positive integer. Suppose that $ \calA $ contains a hyperplane $ H $ with $ m(H) = 1 $. If $ \calA^{H} $ is $ k $-vanishing, then the restriction map
	\begin{equation*}
		H^{p}(M(\calA\setminus \{H\}),\LL) \longrightarrow H^{p}(M(\calA),\LL)
	\end{equation*}
	is an isomorphism for any $ 0 \leq p \leq k $ and an injection for $ p = k+1 $. 
\end{lem}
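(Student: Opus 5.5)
Write $\calA' = \calA \setminus \{H\}$, $U = M(\calA')$ and $M(\calA) = U \setminus (H \cap U)$. The closed subvariety $H \cap U$ is smooth of complex codimension one in $U$. It is canonically identified with $M(\calA^{H})$, the complement in $H \simeq \PP^{n-1}$ of the restricted arrangement. Since $m(H) = 1$, the local system $\LL$ extends across $H \cap U$ to a local system on $U$. I will first check, exactly as in Subsection~\ref{subsection: combinatorics}, that the restriction of this extension to $H\cap U$ is the local system on $M(\calA^{H})$ determined by $m$. The monodromy of $\LL|_{H\cap U}$ around $H'\cap H$ is the product of $m$ over the rank-two flat generated by $H$ and $H'$. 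Because $m(H)=1$, this agrees with the monodromy map of $\LL$ on $\calA^{H}$ as defined in the paper. So $H^{q}(H \cap U, \LL) = H^{q}(\calA^{H})$, which vanishes for $q < k$ by hypothesis.

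The main step is the Gysin (Thom isomorphism) long exact sequence for the pair $(U, H\cap U)$ with coefficients in the rank-one local system $\LL$:
\begin{equation*}
\cdots \to H^{p-2}(H\cap U,\LL) \to H^{p}(U,\LL) \to H^{p}(U\setminus (H\cap U),\LL) \to H^{p-1}(H\cap U,\LL) \to H^{p+1}(U,\LL) \to \cdots
\end{equation*}
To justify it, I will use excision and the tubular neighbourhood theorem to identify $H^{p}(U, U\setminus(H\cap U);\LL) \simeq H^{p-2}(H\cap U,\LL)$. The normal bundle is a complex line bundle, hence oriented, and $\LL$ is locally constant across $H\cap U$, so the Thom isomorphism holds with twisted coefficients. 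Alternatively, I could obtain the same sequence from the logarithmic de Rham sequence used in the proof of Lemma~\ref{lem: long exact sequences}, with $E_0$ replaced by the proper transform of $H$ inside $Y(\calA)$. That route has the residue zero along this component, which is the case handled there. I would nevertheless cite the topological Gysin sequence as cleaner.

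Next comes the degree bookkeeping. The map $H^{p}(U,\LL) \to H^{p}(M(\calA),\LL)$ is preceded by $H^{p-2}(H\cap U,\LL)$ and followed by $H^{p-1}(H\cap U,\LL)$. For $p \le k$, both $p-2 < k$ and $p-1 < k$, so both groups vanish and the restriction map is an isomorphism. For $p = k+1$, the preceding term is $H^{k-1}(H\cap U,\LL)$, which vanishes since $k-1 < k$, so the restriction map is injective. Indices that are negative are trivially zero.

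The only delicate point is the identification of $\LL|_{H\cap U}$ with the combinatorially defined local system on $M(\calA^{H})$. This identification matters because the hypothesis is phrased for $\calA^{H}$ with that local system. I expect to settle it by a local computation near a generic point of $H\cap H'$, using a small transversal loop inside $H$.
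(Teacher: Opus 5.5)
Your proof is correct, and its skeleton coincides with the paper's. Both rest on the Gysin-type sequence
\[
\cdots \to H^{p-2}(\calA^{H}) \to H^{p}(M(\calA\setminus\{H\}),\LL) \to H^{p}(M(\calA),\LL) \to H^{p-1}(\calA^{H}) \to H^{p+1}(M(\calA\setminus\{H\}),\LL) \to \cdots
\]
followed by exactly your degree count. The only difference is where this sequence comes from.

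You derive it topologically, via excision, a tubular neighbourhood and the Thom isomorphism with coefficients in $\LL$. The paper instead uses Theorem \ref{thm: algebraic de rham thm} to identify the two cohomology groups with $H^{p}(\calA,\emptyset,\{H\})$ and $H^{p}(\calA)$ on the wonderful compactification. It then reads off the case $R=\emptyset$, $\calF=\{H\}$ of the residue sequence (\ref{eqn: long exact sequence 1}) in Lemma \ref{lem: long exact sequences}. Note that $\{H\}$ is itself a rank-one resonant flat and $E(\{H\})$ is already the proper transform of $H$. So your ``alternative'' is literally this special case, and the factor coming from $\calA_{\{H\}}$ contributes only $H^{0}=\CC$.

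Your route is more elementary and does not need $Y(\calA)$. The cost is that you must check by hand that $\LL|_{H\cap U}$ is the combinatorially restricted local system on $M(\calA^{H})$. Your linking computation does this correctly: a small loop in $H$ around a generic point of $H\cap H'$ links each other member of the rank-two flat $\calG\ni H,H'$ exactly once. This gives monodromy $\prod_{H''\in\calG\setminus\{H\}}m(H'')=m(\calG)$, because $m(H)=1$.

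The paper instead absorbs that compatibility into its general description of $\LL$ on $E(\calF)^{\circ}=M(\calA^{\calF})\times M(\calA_{\calF})$. The log de Rham formulation is also what it needs anyway for the companion Lemma \ref{lem: dimension shift}. There, sequence (\ref{eqn: long exact sequence 2}) involves groups $H^{p}(\calA,R\cup\{H\})$ whose residue along $E(\{H\})$ is $1$. These groups are not directly the cohomology of a complement, so the plain topological Gysin sequence does not reach them.
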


\begin{proof}
	 By Theorem \ref{thm: algebraic de rham thm}, we have $ H^{p}(M(\calA\setminus \{H\}),\LL) \simeq H^{p}(\calA,\emptyset,\{H\}) $ and $ H^{p}(M(\calA),\LL) \simeq H^{p}(\calA) $. Furthermore, under these identifications, the map $ H^{p}(M(\calA\setminus \{H\}),\LL) \rightarrow H^{p}(M(\calA),\LL) $ is exactly the restriction map $ H^{p}(\calA,\emptyset,\{H\}) \rightarrow H^{p}(\calA) $. Hence the conclusion follows from the following special case of the long exact sequence (\ref{eqn: long exact sequence 1}) in Lemma \ref{lem: long exact sequences}:
	\begin{equation*}
		\cdots \longrightarrow H^{p}(\calA,\emptyset,\{H\}) \longrightarrow H^{p}(\calA) \longrightarrow H^{p-1}(\calA^{H})  \longrightarrow H^{p+1}(\calA,\emptyset,\{H\}) \longrightarrow H^{p+1}(\calA) \longrightarrow \cdots
	\end{equation*}
\end{proof}

On the other hand, restricting to $ H $ links higher‑dimensional arrangements to lower ones, as shown in the following lemma.

\begin{lem}\label{lem: dimension shift}
	Let $ k \leq n $ be a positive integer. Suppose that $ \calA $ contains a hyperplane $ H $ with $ m(H) = 1 $. If $ \calA^{\calF} $ is $ (k+1-r(\calF)) $-vanishing for any resonant flat $ \calF $ properly containing $ H $, then the restriction map
	\begin{equation*}
		H^{p}(M(\calA\setminus \{H\}),\LL) \longrightarrow H^{p}(M(\calA^{H}),\LL)
	\end{equation*}
	is an isomorphism for any $ 0 \leq p \leq k-2 $ and an injection for $ p = k-1 $. 
\end{lem}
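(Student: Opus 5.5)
The plan is to read off the restriction map from the second long exact sequence (\ref{eqn: long exact sequence 2}) of Lemma \ref{lem: long exact sequences}, applied with $R=\emptyset$ and $\calF=\{H\}$. Since $m(H)=1$, the flat $\{H\}$ is irreducible of rank $1\leq n$, so $\{H\}\in\RF(\calA,\LL)$, and the hypotheses $\calF\notin R$ and $R\cap\RF(\calA,\LL)^{\calF}=\emptyset$ hold trivially. The localization $\calA_{\{H\}}$ is a single point in $\PP^{0}$, so $H^{r}(\calA_{\{H\}})$ is $\CC$ for $r=0$ and vanishes otherwise. The sequence (\ref{eqn: long exact sequence 2}) therefore becomes
\begin{equation*}
\cdots\to H^{p}(\calA,\{\{H\}\})\to H^{p}(\calA,\emptyset,\{\{H\}\})\to H^{p}(\calA^{H})\to H^{p+1}(\calA,\{\{H\}\})\to\cdots .
\end{equation*}
As in the proof of Lemma \ref{lem: add a hyperplane}, Theorem \ref{thm: algebraic de rham thm} identifies $H^{p}(\calA,\emptyset,\{\{H\}\})$ with $H^{p}(M(\calA\setminus\{H\}),\LL)$. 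The proof of Lemma \ref{lem: long exact sequences} identifies $H^{p}(\calA^{H})$ with $H^{p}(M(\calA^{H}),\LL)$, and the middle arrow is the restriction to $E(\{H\})^{\circ}=M(\calA^{H})$. It therefore suffices to prove that $(\calA,\{\{H\}\})$ is $k$-vanishing. That vanishing makes the middle arrow an isomorphism for $p\leq k-2$ and injective for $p=k-1$.

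To show $(\calA,\{\{H\}\})$ is $k$-vanishing, I would copy the induction in the proof of Corollary \ref{cor: k-vanishing given by R(H)}, stopping one step early. First reduce to $\calA\notin\RF(\calA,\LL)$. If $\calA$ is resonant, the hypothesis applied to $\calF=\calA\supsetneq\{H\}$ with $\calA^{\calA}=\emptyset$ forces $r(\calA)-1\geq k$, and one passes to $\calA_{\calA}$ as in that proof. Then list the resonant flats containing $H$ as $\calF_{1}=\{H\},\calF_{2},\dots,\calF_{m}$ with nondecreasing rank, and define $R_{j}$ as in that proof. $(\calA,R_{m})$ is $n$-vanishing by Corollary \ref{cor: n-vanishing for R(H)}, hence $k$-vanishing. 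For $j\geq 2$, run the two long exact sequences of Lemma \ref{lem: long exact sequences} for $\calF_{j}$. The inputs are the hypothesis that $\calA^{\calF_{j}}$ is $(k+1-r(\calF_{j}))$-vanishing, which is exactly where ``properly containing $H$'' is used, and the fact that $(\calA_{\calF_{j}},(R_{j-1})_{\calF_{j}})$ is $(r(\calF_{j})-1)$-vanishing by Corollary \ref{cor: n-vanishing for R(H)}. Together these pass $k$-vanishing from $R_{j}$ to $R_{j-1}$. Stopping at $j=1$ gives that $(\calA,R_{1})$ is $k$-vanishing, where
\begin{equation*}
R_{1}=\{\{H\}\}\cup\{\calF\in\IF(\calA)^{0}\mid H\in\calF,\ \re\alpha(\calF)=0,\ \alpha(\calF)\neq 0\}.
\end{equation*}

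The last step, which I expect to be the main obstacle, is to discard the non-resonant flats in $R_{1}\setminus\{\{H\}\}$ without changing hypercohomology. This cannot be done via Theorem \ref{thm: algebraic de rham thm} as at the end of Corollary \ref{cor: k-vanishing given by R(H)}, because $\{H\}$ itself carries residue $1$. Instead I would use the local statement behind Deligne's theorem, \cite[Properties 2.9]{esnault1992lectures}. If the residue of $\V$ along a component $E$ is not an integer, then the inclusion $\Omega^{*}(\log D)\otimes\V(-E)\hookrightarrow\Omega^{*}(\log D)\otimes\V$ is a quasi-isomorphism. Equivalently, the complex on $E$ given by the residue sequence is acyclic, since it is the log de Rham complex of a connection whose residue is a non-integer. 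For $\calF$ non-resonant with $\re\alpha(\calF)=0$, the residue is $\alpha(\calF)$ or $\alpha(\calF)+1$, a non-integer either way. Removing these flats one at a time therefore gives $H^{p}(\calA,R_{1})\simeq H^{p}(\calA,\{\{H\}\})$, which finishes the proof. If this local acyclicity is awkward to justify, the fallback is to rerun the induction with the resonant flats only, applying Lemma \ref{lem: vanishing lemma} directly with a $\delta$ that makes $\sum_{H'\in\calF}\delta(H')<0$ exactly on the resonant flats containing $H$.
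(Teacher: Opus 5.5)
Your proof is correct, and its core is the paper's. You run the induction of Corollary \ref{cor: k-vanishing given by R(H)}, stopped before the step for $\{H\}$, to show that $(\calA,R_{1})$ is $k$-vanishing, where $R_{1}=\{\{H\}\}\cup R$ and $R=\{\calF\in\IF(\calA)^{0}\mid H\in\calF,\ \re\alpha(\calF)=0,\ \alpha(\calF)\neq 0\}$. You then apply the sequence (\ref{eqn: long exact sequence 2}) with $\calF=\{H\}$. The only difference is how $H^{p}(\calA,R_{1})$ is connected to $M(\calA\setminus\{H\})$, and there the paper avoids your extra step.

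The paper applies Lemma \ref{lem: long exact sequences} with this $R$ rather than with $\emptyset$. This is legitimate because $R$ contains no resonant flat, and the third term is still $H^{p}(\calA^{H})$ since $R_{\{H\}}=\emptyset$. The first term is then $H^{p}(\calA,R\cup\{\{H\}\})=H^{p}(\calA,R_{1})$ directly. The middle term $H^{p}(\calA,R,\{\{H\}\})$ is identified with $H^{p}(M(\calA\setminus\{H\}),\LL)$ by Theorem \ref{thm: algebraic de rham thm} itself. Once $E(\{H\})$ is dropped from the polar divisor, the only residues differing from $\alpha(\cdot)$ are $\alpha(\calF)+1$ for $\calF\in R$, and these are non-integers, hence not positive integers. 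So the obstruction you point to, residue $1$ along $E(\{H\})$, disappears by putting $\{H\}$ into $T$ instead of into $R$. No comparison between $H^{p}(\calA,R_{1})$ and $H^{p}(\calA,\{\{H\}\})$ is then needed.

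Your route replaces this with the standard fact that $\Omega^{\bullet}(\log D)\otimes\V(-E)\hookrightarrow\Omega^{\bullet}(\log D)\otimes\V$ is a quasi-isomorphism when the rank-one residue of $\V$ along $E$ is nonzero. Applied with residue $\alpha(\calF)\neq 0$, this is valid. It gives the slightly stronger intermediate statement that $(\calA,\{\{H\}\})$ is $k$-vanishing, but it costs an extra lemma that is really the mechanism inside Deligne's theorem.

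If you keep it, fix the justification. The quotient complex on $E$ is acyclic because it is locally the cone of multiplication by the nonzero residue on $\Omega^{\bullet}_{E}(\log(D-E)|_{E})\otimes\V|_{E}$. It is not acyclic because it is a logarithmic de Rham complex with non-integer residue; such complexes are not acyclic in general.

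Your fallback via Lemma \ref{lem: vanishing lemma} is unnecessary. A $\delta$ whose sign separates the resonant from the nonresonant flats through $H$ need not exist in general.
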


\begin{proof}
		Similar to the proof of Corollary \ref{cor: k-vanishing given by R(H)}, we may assume that $ \calA \notin \RF(\calA,\LL) $. Define
		\begin{equation*}
			R = \{\calF \in \IF(\calA)^{0} \mid H \in \calF,\ \re \alpha(\calF) = 0 \text{ and } \alpha(\calF) \neq 0 \}.
		\end{equation*}
		 By the same induction in the proof of Corollary \ref{cor: k-vanishing given by R(H)}, we obtain that $ (\calA,R \cup \{H\}) $ is $ k $-vanishing. Since $ R \cap \RF(\calA,\LL) = \emptyset $, by Theorem \ref{thm: algebraic de rham thm} we have $ H^{p}(M(\calA\setminus \{H\}),\LL) \simeq H^{p}(\calA,R,\{H\}) $ and $ H^{p}(M(\calA^{H}),\LL) \simeq H^{p}(\calA^{H}) $. Furthermore, under these identifications, the map $ H^{p}(M(\calA\setminus \{H\}),\LL) \rightarrow H^{p}(M(\calA^{H}),\LL) $ is exactly the restriction map $ H^{p}(\calA,R,\{H\}) \rightarrow H^{p}(\calA^{H}) $. Hence the conclusion follows from the following special case of the long exact sequence (\ref{eqn: long exact sequence 2}) in Lemma \ref{lem: long exact sequences}:
		\begin{equation*}
			\cdots \longrightarrow H^{p}(\calA,R \cup \{H\}) \longrightarrow H^{p}(\calA,R,\{H\}) \longrightarrow H^{p}(\calA^{H})  \longrightarrow H^{p+1}(\calA,R \cup \{H\}) \longrightarrow \cdots
		\end{equation*}
\end{proof}

\subsection{Proof of Theorem \ref{main: n-vanishing} and its Application to Line Arrangements}\label{subsection: n-vanishing}

By Lemma \ref{lem: vanishing lemma}, to prove that $ \calA $ is $ n $-vanishing, it suffices to find a map $ \delta \colon \calA \rightarrow \QQ $ such that
\begin{enumerate}
	\item $ \sum\limits_{H \in \calA}\delta(H) = 0 $, and
	\item $ \sum\limits_{H \in \calF}\delta(H) > 0 $ for any $ \calF \in \RF(\calA,\LL) $.
\end{enumerate}

The following lemma gives an equivalent condition for such a map $ \delta $ to exist.

\begin{lem}\label{lem: non-negative linear combination}
	Let $ W $ be a finite dimensional $ \QQ $-vector space. Let $ f_{1},\cdots,f_{m} \in W^{*} $ be $ m $ linear functionals on $ W $. Then the following are equal:
	\begin{enumerate}
		\item For any $ w \in W $, there exists $ 1 \leq i \leq m $ such that $ f_{i}(w) \leq 0 $.
		\item There exist natural numbers $ \lambda_{1},\cdots,\lambda_{m} $, not all zero, such that $ \sum_{i=1}^{m} \lambda_{i}f_{i} = 0 $.
	\end{enumerate}
\end{lem}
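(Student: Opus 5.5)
This is Gordan's theorem of the alternative over $\QQ$. The plan is to prove both implications directly, using a separation (Farkas-type) argument for the nontrivial direction.

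\emph{(2) $\Rightarrow$ (1).} Suppose $\lambda_{i} \in \NN$ are not all zero and $\sum_{i}\lambda_{i}f_{i} = 0$. Assume some $w$ had $f_{i}(w) > 0$ for every $i$. Then
\begin{equation*}
	0 = \sum_{i}\lambda_{i}f_{i}(w) > 0,
\end{equation*}
because every term is nonnegative and at least one is strictly positive. This is a contradiction, so (1) holds.

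\emph{(1) $\Rightarrow$ (2).} Consider the linear map $F\colon W \to \QQ^{m}$, $w \mapsto (f_{1}(w),\dots,f_{m}(w))$, and let $U = F(W)$. Condition (1) says that $U$ misses the open positive orthant. Equivalently, $U$ does not meet the compact simplex
\begin{equation*}
	\Delta = \{x \in \RR^{m} \mid x_{i} \geq 0,\ \textstyle\sum_{i} x_{i} = 1\},
\end{equation*}
since a point of $U$ lying in $\Delta$ can be pushed into the open orthant. The cleanest way to make this rigorous is to pass to $\RR$.

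\begin{enumerate}
	\item \textbf{Pass to the reals.} The positivity condition defines a rational polyhedral cone, so if $U_{\RR} = U \otimes \RR$ met the open orthant, then rational points would be dense in that nonempty open subset of $U_{\RR}$. Hence $U_{\RR}$ also misses the open orthant.
	\item \textbf{Separate.} $U_{\RR}$ is a subspace and the open orthant is convex and open, so Hahn--Banach gives a nonzero $\lambda \in \RR^{m}$ with $\lambda \perp U_{\RR}$ and $\lambda \cdot x \geq 0$ on the orthant. The latter forces $\lambda_{i} \geq 0$ for all $i$. The former says $\sum_{i}\lambda_{i}f_{i} = 0$.
	\item \textbf{Rationalize.} The set
	\begin{equation*}
		C = \{\lambda \in \RR^{m} \mid \lambda \geq 0,\ \lambda \perp U_{\RR}\}
	\end{equation*}
	is a polyhedral cone cut out by rational linear inequalities. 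It is therefore generated by finitely many rational rays. Since $C \neq 0$, it contains a nonzero rational vector.
	\item \textbf{Clear denominators.} Scaling that rational vector gives $\lambda_{i} \in \NN$, not all zero, with $\sum_{i}\lambda_{i}f_{i} = 0$.
\end{enumerate}

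\emph{Alternative route.} One could instead apply Farkas' lemma over an ordered field, which holds verbatim over $\QQ$ via Fourier--Motzkin elimination. This avoids passing to $\RR$ altogether.

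The main obstacle is the rationality bookkeeping: ensuring that a real separating functional can be replaced by a natural-number one, and that the $\QQ$-version of (1) implies the $\RR$-version. Both steps rest on the fact that rational polyhedral cones have rational generators and that their relative interiors contain rational points. I would cite this standard fact rather than reprove it.
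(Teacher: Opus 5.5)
Your proof is correct, but it takes a different route from the paper. The paper proves (1)$\Rightarrow$(2) by an elementary induction on $m$ that never leaves $\QQ$:
\begin{itemize}
\item if $f_{1},\dots,f_{m-1}$ already satisfy (1), it applies the induction hypothesis directly;
\item otherwise it picks $w_{0}$ with $f_{i}(w_{0})>0$ for $i<m$ and, after perturbing and rescaling, $f_{m}(w_{0})$ a negative integer;
\item it then passes to $W/\Span(w_{0})$ with the functionals $f_{i}-\frac{f_{i}(w_{0})}{f_{m}(w_{0})}f_{m}$ and checks that these still satisfy (1);
\item finally it recombines the coefficients produced by induction into $-\lambda'_{i}f_{m}(w_{0})$ and $\sum_{i}\lambda'_{i}f_{i}(w_{0})$, which are automatically natural numbers.
\end{itemize}
In effect this is a Fourier--Motzkin elimination proof of Gordan's theorem, i.e.\ the ``alternative route'' you mention, carried out by hand.

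Your version instead extends scalars to $\mathbb{R}$ (your density argument for this is fine), separates $U\otimes\mathbb{R}$ from the open orthant, and then descends to $\QQ$ via rational generation of rational polyhedral cones. This is shorter and makes the structure transparent. However, it outsources the content to two cited theorems, and the descent step is of the same depth as the lemma itself: rationality of generators is typically proved by exactly this kind of elimination. The paper's argument is fully self-contained and handles integrality for free.

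One sentence in your write-up is false and should be deleted: missing the open orthant is not equivalent to missing the simplex $\Delta$. Take $W=\QQ$, $f_{1}=\mathrm{id}$, $f_{2}=0$. Condition (1) holds, yet $U=\QQ\times\{0\}$ contains $(1,0)\in\Delta$. Missing $\Delta$ corresponds instead to the stronger Stiemke alternative with all $\lambda_{i}>0$, which fails here. Since your numbered steps never use $\Delta$, the proof itself is unaffected.
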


\begin{proof}
	Sufficiency is trivial. We prove the necessity by induction on $ m $. The conclusion obviously holds when $ m = 1 $. 
	
	Suppose that the conclusion holds for $ m-1 $. Then for $ m $, we may assume that $ f_{m} \neq 0 $. If for any $ w \in W $, there exists $ 1 \leq i \leq m - 1 $ such that $ f_{i}(w) \leq 0 $, then by the induction hypothesis, there exist natural numbers $ \lambda_{1},\cdots,\lambda_{m-1} $, not all zero, such that $ \sum_{i=1}^{m-1} \lambda_{i}f_{i} = 0 $, which implies that the conclusion holds. 
	
	So we may assume that there exists $ w_{0} \in W $ such that $ f_{i}(w_{0}) > 0 $ for all $ 1 \leq i \leq m-1 $. By assumption we have $ f_{m}(w_{0}) \leq 0 $. Since $ f_{m} \neq 0 $, By slightly perturbing $ w_{0} $, we may assume $ f_{m}(w_{0}) < 0 $. By rescaling we may further assume $ f_{i}(w_{0}) \in \ZZ $ for all $ 1 \leq i \leq m $.
	
	Consider the $ \QQ $-vector space $ W' = W/\Span(w_{0}) $. For each $ 1 \leq i \leq m-1 $, the functional $ f_{i} - \frac{f_{i}(w_{0})}{f_{m}(w_{0})}f_{m} $ maps $ w_{0} $ to $ 0 $ and thus defines a linear functional on $ W' $, which we denote by $ f'_{i} $. If there exists some $ w \in W $ such that $ f'_{i}([w]) > 0 $ for all $ 1 \leq i \leq m-1 $, we can choose $ \mu \in \QQ $ such that
	\begin{equation*}
	0 < f(w+\mu w_{0}) = f_{m}(w)+\mu f_{m}(w_{0}) < \min_{1 \leq i \leq m-1}\left(-\frac{f_{m}(w_{0})}{f_{i}(w_{0})}f'_{i}([w])\right).
	\end{equation*}
	Then we have $ f_{m}(w+\mu w_{0}) > 0 $ and for any $ 1 \leq i \leq m-1 $,
	\begin{equation*}
		f_{i}(w+\mu w_{0}) = f'_{i}([w]) + \frac{f_{i}(w_{0})}{f_{m}(w_{0})}f_{m}(w+\mu w_{0}) > 0,
	\end{equation*}
	which contradicts the assumption.
	
	Therefore, for any $ w \in W $, there exists $ 1 \leq i \leq m-1 $ such that $ f'_{i}([w]) \leq 0 $. By the induction hypothesis there exist natural numbers $ \lambda'_{1},\cdots,\lambda'_{m-1} $, not all zero, such that $ \sum_{i=1}^{m-1} \lambda'_{i}f'_{i} = 0 $. Thus we have
	\begin{equation*}
		\sum\limits_{i=1}^{m-1}(-\lambda'_{i}f_{m}(w_{0}))f_{i}+(\sum\limits_{i=1}^{m-1}\lambda'_{i}f_{i}(w_{0}))f_{m} = -f_{m}(w_{0})\sum\limits_{i=1}^{m-1}\lambda'_{i}(f_{i} - \frac{f_{i}(w_{0})}{f_{m}(w_{0})}f_{m}) = 0.
	\end{equation*}
	Hence the conclusion holds by induction.
\end{proof}

Now we are ready to prove Theorem \ref{main: n-vanishing}.

\begin{proof}[Proof of Theorem \ref{main: n-vanishing}]
	Take
	\begin{equation*}
		W = \{ \delta \colon \calA \rightarrow \QQ  \mid  \sum_{H \in \calA}\delta(H) = 0 \} 
	\end{equation*}
	and for any $ \calF \in \RF(\calA,\LL) $, define
	\begin{equation*}
		f(\calF) \colon W \rightarrow \QQ,\ f(\calF)(\delta) = \sum\limits_{H \in \calF} \delta(H).
	\end{equation*}
	
	For any non-identically-zero map $ \lambda \colon \RF(\calA,\LL) \rightarrow \NN $, by assumption, there exist $ H_{1},H_{2} \in \calA $ such that 
	\begin{equation*}
	\sum\limits_{\calF \ni H_{1}}\lambda(\calF) \neq \sum\limits_{\calF \ni H_{2}}\lambda(\calF).
	\end{equation*}
	
	Define $ \delta_{\lambda} \in W $ as
	\begin{equation*}
		\delta_{\lambda}(H) = \begin{cases}
			1, & H = H_{1}, \\
			-1, & H = H_{2}, \\
			0, & \text{otherwise}.
		\end{cases}
	\end{equation*}
	
	Then we have
	\begin{equation*}
		\sum\limits_{\calF \in \RF(\calA,\LL)} \lambda(\calF)f(\calF)(\delta_{\lambda}) = \sum\limits_{H \in \calA}(\sum\limits_{\calF \ni H}\lambda(\calF))\delta_{\lambda}(H) = 		\sum\limits_{\calF \ni H_{1}}\lambda(\calF) - \sum\limits_{\calF \ni H_{2}}\lambda(\calF) \neq 0.
	\end{equation*}
	
	 So by Lemma \ref{lem: non-negative linear combination}, there exists $ \delta \in W $ such that for all $ \calF \in \RF(\calA,\LL) $, 
	 \begin{equation*}
	 	\sum\limits_{H \in \calF} \delta(H) = f(\calF)(\delta) > 0.
	 \end{equation*}
	 
	 By slightly perturbing $ \delta $, we may assume that $ \sum\limits_{H \in \calF}\delta(H) \neq 0 $ for any $ \calF \in \IF(\calA)^{0} $ such that $ \re \alpha(\calF) = 0 $. Then taking
	 \begin{equation*}
	 	R = \{\calF \in \IF(\calA)^{0} \mid  \sum\limits_{H \in \calF}\delta(H)< 0 \text{ and } \re \alpha(\calF) = 0 \},
	 \end{equation*}
	 by Lemma \ref{lem: vanishing lemma} we have $ (\calA,R) $ is $ n $-vanishing, which implies that $ H^{p}(M(\calA),\LL) = 0 $ for all $ p \neq n $ since $ R \cap \RF(\calA,\LL) = \emptyset $.
\end{proof}

As an application, we use this theorem to prove Theorem \ref{main: one point}.

\begin{cor}[Theorem \ref{main: one point}]
	Let $ \calA $ be a line arrangement in $ \PP^{2} $ and $ \LL $ a complex rank-one local system on the complement $ M(\calA) $ with monodromy map $ m \colon \calA \rightarrow \CC^{\times} $. If there exists a resonant flat $ \calF $ of rank $ 2 $ such that
	\begin{enumerate}
		\item there exists $ H_{1} \in \calF $ such that $ m(H_{1}) \neq 1 $ and no other resonant flat contains $ H_{1} $, and
		\item  there exists $ H_{2} \in \calA \setminus \calF $ such that $ m(H_{2}) \neq 1 $,
	\end{enumerate}
	then $ \calA $ is $ 2 $-vanishing.
\end{cor}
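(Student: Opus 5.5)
The plan is to apply Theorem \ref{main: n-vanishing} with $n=2$. By that theorem it suffices to show the following: for every non-identically-zero $\lambda\colon \RF(\calA,\LL)\to\NN$, the function $v(H)=\sum_{\calG\ni H}\lambda(\calG)$ is non-constant on $\calA$. We argue by contradiction and suppose $v\equiv c$ is constant.

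First we list the resonant flats of a line arrangement in $\PP^2$. They are the rank-one flats $\{L\}$ with $m(L)=1$, and the rank-two irreducible flats, namely intersection points $q$ of multiplicity at least $3$ whose lines have product of monodromies equal to $1$. The flat $\calA$ itself is not resonant. Indeed, $H_{2}\notin\calF$ and $\calF$ is a flat, so $H_2$ does not pass through the point $p=Z(\calF)$. Hence $\calA$ has rank $3$.

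Next we use $H_1$. Since $m(H_1)\neq 1$, the flat $\{H_1\}$ is not resonant, and by hypothesis (1) $\calF$ is the only resonant flat containing $H_1$. Therefore $c=v(H_1)=\lambda(\calF)$. If $c=0$, then every resonant flat $\calG$ with $\lambda(\calG)>0$ contains some line $L$ with $v(L)\ge\lambda(\calG)>0$, which is a contradiction. Hence $c=\lambda(\calF)>0$.

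Now sum $v$ over the lines of $\calF$:
\[
 \#\calF\cdot c=\sum_{L\in\calF}v(L)=\#\calF\,\lambda(\calF)+\sum_{\calG\neq\calF}\lambda(\calG)\,\#(\calG\cap\calF).
\]
It follows that $\lambda(\calG)=0$ for every resonant $\calG\neq\calF$ sharing a line with $\calF$. So the remaining weight is supported on resonant flats contained in $\calB=\calA\setminus\calF$, and it induces a constant function equal to $c>0$ on $\calB$. In particular $H_2\in\calB$, and $m(H_2)\neq1$, so $v(H_2)=c$ must come from resonant points $q\in H_2$ all of whose lines lie in $\calB$.

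The step I expect to be the main obstacle is deriving a contradiction from this residual configuration. Here I would exploit $H_1$ once more. Every line of $\calB$ meets $H_1$ at a point different from $p$, and that point is non-resonant. Every point $q$ carrying weight is an intersection point of lines of $\calB$ only. I would then try a weighted double count of $\sum_{\calG}\lambda(\calG)\#\calG$ against the number of lines of $\calB$, combined with a real-part argument on the residues: take $\sum_{\calG}\lambda(\calG)\re\alpha(\calG)$, written via $d(\calG)$, and compare it with $c\sum_{H\in\calB}\re\alpha(H)$, using $\re\alpha\in[0,1)$ and $m(H_2)\neq1$. If this purely combinatorial route fails, the fallback is to avoid Theorem \ref{main: n-vanishing} for this residual case. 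Instead, first delete the lines $L$ with $m(L)=1$ using Lemma \ref{lem: add a hyperplane}: each restriction $\calA^{L}$ is a set of points on a line carrying a local system, so it is $1$-vanishing unless that system is trivial. Then rerun the argument on the smaller arrangement, where rank-one resonant flats no longer occur.
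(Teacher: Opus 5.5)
Your first steps are correct: the constant is $c=\lambda(\calF)>0$, and every resonant flat $\calG\neq\calF$ that shares a line with $\calF$ gets weight $0$. The gap is the residual configuration. It is \emph{not} contradictory, so applying Theorem~\ref{main: n-vanishing} directly to $\calA$ cannot work in general.

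Here is a concrete example. Let $\calB$ be the six lines joining four general points $q_1,\dots,q_4$ of $\PP^2$ in pairs (the $A_3$ arrangement). Each $q_i$ is a triple point, and each line of $\calB$ passes through exactly two of the $q_i$. Let $\calF$ consist of three general lines through a general point $p$, set $\calA=\calF\sqcup\calB$, and put $m\equiv\omega=e^{2\pi\sqrt{-1}/3}$ on all nine lines, so that $\prod m=\omega^9=1$. Then, identifying each $q_i$ with the flat of lines through it, $\RF(\calA,\LL)=\{\calF,q_1,\dots,q_4\}$. Hypotheses (1) and (2) hold for any $H_1\in\calF$ and $H_2\in\calB$. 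Yet the weights $\lambda(\calF)=2$ and $\lambda(q_i)=1$ give $\sum_{\calG\ni H}\lambda(\calG)=2$ for every line $H$.

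So no double count can produce a contradiction. The residue comparison you suggest only yields identities such as $\sum_{\calG}\lambda(\calG)d(\calG)=c\,d(\calA)$, which hold for every such $\lambda$. More conceptually, by Lemma~\ref{lem: non-negative linear combination}, the existence of this $\lambda$ is equivalent to the nonexistence of a $\delta$ as in Lemma~\ref{lem: vanishing lemma} with $R\cap\RF(\calA,\LL)=\emptyset$. Hence nothing inside that framework, applied to $\calA$ itself, can succeed. Your fallback does not touch the obstruction either: in the example no line has trivial monodromy, and the balanced weighting comes from rank-two resonant points inside $\calB$.

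The missing idea is to \emph{enlarge} the arrangement rather than shrink it.
\begin{enumerate}
\item Assume $H^1(M(\calA),\LL)\neq0$. For every rank-two flat $\calG$ with $H_1\notin\calG$, adjoin the line $H_\calG$ through $p=Z(\calF)$ and $Z(\calG)$, with trivial monodromy. Call the result $\calA'$, and let $\calF'=\calF\cup\{H_\calG\}$.
\item Adjoining a line of trivial monodromy makes $H^1$ inject; this is the exact sequence in the proof of Lemma~\ref{lem: add a hyperplane} at $p=1$. Hence $(\calA',\LL)$ is still resonant.
\item The new lines meet $H_1$ only at $p$, so $\calF'$ is still the unique resonant flat through $H_1$.
\item Now every rank-two flat of $\calA'$ not containing $H_1$ shares a line with $\calF'$.
\item Run your own two steps on $\calA'$. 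They force $\lambda=0$ on every resonant flat through $H_2$, since all of these have rank two and none contains $H_1$. Hence $c=\sum_{\calG'\ni H_2}\lambda(\calG')=0$, so $\lambda\equiv0$, a contradiction.
\end{enumerate}
In the example above, the added lines $pq_i$ are exactly what destroys the balanced weighting.
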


\begin{proof}
	We prove the corollary by contradiction. Suppose that $ \calA $ is not $ 2 $-vanishing. Then $ H^{1}(M(\calA),\LL) \neq 0 $. For any flat $ \calG \in L(\calA) $ of rank $ 2 $ such that $ H_{1} \notin \calG $, there exists a unique line in $ \PP^{2} $ passing through the points $ Z(\calF) $ and $ Z(\calG) $, which we denote by $ H_{\calG} $. Define
	\begin{equation*}
		\calA' = \calA \cup \{H_{\calG} \mid \calG \in L(\calA),\ r(\calG) = 2,\ H_{1} \notin \calG\}
	\end{equation*}
	and denote
	\begin{equation*}
		\calF' = \{H \in \calA' \mid Z(\calF) \in H\} = \calF \cup \{H_{\calG} \mid \calG \in L(\calA),\ r(\calG) = 2,\ H_{1} \notin \calG\}.
	\end{equation*}
	By definition, $ \calF' $ is the only resonant flat of $ \calA' $ containing $ H_{1} $. Furthermore, for any flat $ \calG' \in L(\calA') $ of rank $ 2 $ such that $ H_{1} \notin \calG' $, the arrangement $ \calA' $ must contain the line passing through $ Z(\calF) $ and $ Z(\calG') $, or equivalently, $ \calG' \cap \calF' \neq \emptyset$

	By Lemma \ref{lem: add a hyperplane}, we have the restriction map $ H^{1}(M(\calA),\LL) \rightarrow H^{1}(M(\calA'),\LL) $ is injective. So the pair $ (\calA',\LL) $ is resonant. By Theorem \ref{main: n-vanishing}, there exists a non-identically-zero map $ \lambda \colon \RF(\calA',\LL) \rightarrow \NN $ such that for any $ H \in \calA' $,
	\begin{equation*}
		\sum\limits_{\calG' \ni H}\lambda(\calG') = \sum\limits_{\calG' \ni H_{1}}\lambda(\calG') = \lambda(\calF').
	\end{equation*}
	
	 For any resonant flat $ \calG' \in \RF(\calA,\LL) $ containing $ H_{2} $, we have $ H_{1} \notin \calG' $ and $ r(\calG') = 2 $, which implies that $ \calG' \cap \calF' \neq \emptyset $. Taking $ H \in \calG' \cap \calF' $ in the above equality, we have $ \lambda(\calG') = 0 $. 
	 
	 Hence for any $ H \in \calA' $,
	\begin{equation*}
		\sum\limits_{\calG' \ni H}\lambda(\calG') = \sum\limits_{\calG' \ni H_{2}}\lambda(\calG') = 0,
	\end{equation*}
	which contradicts the assumption that $ \lambda $ is non-identically-zero.
\end{proof}

\subsection{A Restriction Argument}\label{subsection: restriction}

In general, there may not exist a hyperplane in $ \calA $ with trivial monodromy. However, we can always add a hyperplane $ H_{0} \notin \calA $ into $ \calA $. Then the monodromy of $ H_{0} $ is naturally $ 1 $, and we can use Lemma \ref{lem: dimension shift}. The following lemma compares the combinatorial structure of $ \calA $ with that of its restriction to the hyperplane $ H_{0} $.

\begin{lem}\label{lem: relations between flats after restriction}
	Let $ H_{0} \notin \calA $ be a hyperplane in $ \PP^{n} $. Define $ \calA_{0} = \{H \cap H_{0} \mid H \in \calA\}  $. Then $ \calA_{0} $ is a hyperplane arrangement in $ H_{0} \simeq \PP^{n-1} $ with $ M(\calA_{0}) \subseteq M(\calA) $. Furthermore, $ \calA_{0} $ satisfies the following properties.
	\begin{enumerate}
		\item For any flat $ \calG \in L(\calA_{0}) $, $ \widetilde{\calG} = \{H \in \calA \mid H \cap H_{0} \in \calG\} $ is a flat of $ \calA $ such that
		\begin{equation}\label{eqn: rank formula for section and lifting}
			   r(\widetilde{\calG}) = \begin{cases}
				r(\calG) + 1, & Z(\widetilde{\calG}) \subseteq H_{0}, \\
				r(\calG), & \text{otherwise}.
			\end{cases}
		\end{equation}	

		\item For any flat $ \calF \in L(\calA) $, if $ \calF \cup \{H_{0}\} \in L(\calA \cup \{H_{0}\}) $, or equivalently,
		\begin{equation}\label{eqn: condition for a flat to descend}
			\calF = \{H \in \calA \mid Z(\calF)\cap H_{0} \subseteq H\},
		\end{equation}
		then $ \calF_{0} = \{H \cap H_{0} \mid H \in \calF\} $ is a flat of $ \calA_{0} $. Furthermore, the map $ \calF \mapsto \calF_{0} $ induces a bijection
		\begin{equation*}
		\{\calF \in L(\calA) \mid 	\calF \text{ satisfies the condition }(\ref{eqn: condition for a flat to descend})\} \xrightarrow{1:1} L(\calA_{0}),
		\end{equation*}
		with inverse given by $ \calG \mapsto \widetilde{\calG} $.
	\item
	Let $ \calF $ be a nonempty subset of $ \calA $. Then $ \calF \cup \{H_{0}\} \in \IF(\calA \cup \{H_{0}\}) $ if and only if  $ \calF $ is a flat of $ \calA $ such that $ Z(\calF) \subseteq H_{0} $ and $ Z(\widetilde{\calG}) \subseteq H_{0} $ for any irreducible component $ \calG $ of $ \calF_{0} $.
	\end{enumerate}
\end{lem}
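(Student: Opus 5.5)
The plan is to work throughout with the cones $V(\cdot)\subseteq \CC^{n+1}$ and a fixed linear form $l_{0}$ defining $H_{0}$, so that $V(H_{0}) = \ker l_{0}$. For $S\subseteq\calA$ we have $Z(S_{0}) = Z(S)\cap H_{0}$, where $S_{0}=\{H\cap H_{0}\mid H\in S\}$, and on cones $V(S_{0}) = V(S)\cap V(H_{0})$. Since $H_{0}\notin\calA$, each $H\cap H_{0}$ is a hyperplane of $H_{0}$, possibly with repetitions; I will read $\calA_{0}$ as a set and flats of $\calA_{0}$ as subsets of it. The inclusion $M(\calA_{0})\subseteq M(\calA)$ is immediate, because a point of $H_{0}$ lies in $H$ if and only if it lies in $H\cap H_{0}$.

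\emph{Part (1).} Take $\calG\in L(\calA_{0})$. Then $Z(\widetilde{\calG})\cap H_{0}=Z(\calG)$. The first step is to show that $\widetilde{\calG}$ is a flat. If $H\in\calA$ satisfies $H\supseteq Z(\widetilde{\calG})$, then $H\cap H_{0}\supseteq Z(\calG)$, and since $\calG$ is a flat this gives $H\cap H_{0}\in\calG$, so $H\in\widetilde{\calG}$. The rank formula follows from the dimension count $\dim(V\cap V(H_{0})) = \dim V - 1$ when $V\not\subseteq V(H_{0})$, and $=\dim V$ otherwise. Since $\codim$ in $H_{0}$ is measured in a space of one lower dimension, the two cases give $r(\widetilde{\calG})=r(\calG)$ and $r(\calG)+1$ respectively.

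\emph{Part (2).} First I check the equivalence. The flat of $\calA\cup\{H_{0}\}$ generated by $\calF\cup\{H_{0}\}$ is $\{H\mid H\supseteq Z(\calF)\cap H_{0}\}\cup\{H_{0}\}$, so the condition $\calF\cup\{H_{0}\}\in L(\calA\cup\{H_{0}\})$ is exactly (\ref{eqn: condition for a flat to descend}). Under this condition, $\calF_{0}$ is a flat: if $H\cap H_{0}\supseteq Z(\calF_{0}) = Z(\calF)\cap H_{0}$, then $H\supseteq Z(\calF)\cap H_{0}$, hence $H\in\calF$. For the bijection I verify the two compositions. Given $\calF$ satisfying the condition, $\widetilde{\calF_{0}}=\{H\mid H\cap H_{0}\in\calF_{0}\}$. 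Here $H\cap H_{0}=H'\cap H_{0}$ for some $H'\in\calF$ implies $H\supseteq Z(\calF)\cap H_{0}$, so $H\in\calF$; the reverse inclusion is clear. Given $\calG\in L(\calA_{0})$, we have $(\widetilde{\calG})_{0}=\calG$ because every element of $\calA_{0}$ is of the form $H\cap H_{0}$. Moreover $\widetilde{\calG}$ satisfies (\ref{eqn: condition for a flat to descend}), since $Z(\widetilde{\calG})\cap H_{0}=Z(\calG)$ together with the flatness of $\calG$ reproduces $\widetilde{\calG}$.

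\emph{Part (3).} This is the main obstacle, and I will work via the decomposition criterion of Lemma \ref{lem: irreducible decomposition}. For the forward direction, suppose $\calF\cup\{H_{0}\}$ is irreducible. It is a flat, so by (2) $\calF$ satisfies (\ref{eqn: condition for a flat to descend}). If $Z(\calF)\not\subseteq H_{0}$, then $r(\calF\cup\{H_{0}\}) = r(\calF)+1$, and the splitting $\calF\sqcup\{H_{0}\}$ contradicts irreducibility; the edge case where $\calF$ is a flat is handled by (2). Hence $Z(\calF)\subseteq H_{0}$, and (\ref{eqn: condition for a flat to descend}) shows that $\calF$ is itself a flat of $\calA$. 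Now let $\calF_{0}=\calG_{1}\sqcup\cdots\sqcup\calG_{m}$ be the irreducible decomposition, and suppose some $Z(\widetilde{\calG_{1}})\not\subseteq H_{0}$. By (1), $r(\widetilde{\calG_{1}})=r(\calG_{1})$. Set $S_{2}=\widetilde{\calG_{2}\sqcup\cdots\sqcup\calG_{m}}\cup\{H_{0}\}$. I then expect the ranks to add up, $r(\widetilde{\calG_{1}})+r(S_{2}) \le r(\calG_{1}) + (r(\calF_{0})-r(\calG_{1})) + 1 = r(\calF)$, which gives a nontrivial splitting and a contradiction. For the converse, assume a splitting $\calF\cup\{H_{0}\}=S_{1}\cup S_{2}$ with $H_{0}\in S_{2}$ and ranks adding. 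Intersecting with $H_{0}$ gives $\calF_{0}=(S_{1})_{0}\cup(S_{2}\setminus\{H_{0}\})_{0}$, and the rank identity should drop to one for $\calF_{0}$. By uniqueness of the decomposition, $(S_{1})_{0}$ is then a union of components $\calG_{i}$. Since $S_{1}\not\ni H_{0}$ and the ranks add, one gets $Z(S_{1})\not\subseteq H_{0}$, so $Z(\widetilde{\calG_{i}})\not\subseteq H_{0}$ for some $i$, contradicting the hypothesis. The delicate bookkeeping in this step is tracking ranks under intersection with $H_{0}$ via (1), and I will do it carefully on cones.
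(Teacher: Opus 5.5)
Your proposal is correct. Parts (1), (2) and the necessity half of (3) follow the paper's argument.

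\begin{itemize}
\item In necessity, the paper also excludes $Z(\calF)\not\subseteq H_{0}$ via the splitting $\calF\sqcup\{H_{0}\}$. Your remark about an ``edge case where $\calF$ is a flat'' can be dropped, since irreducibility only asks for proper subsets with additive ranks.
\item The paper then builds the same splitting $\widetilde{\calG_{1}}\sqcup(\widetilde{\calG_{2}}\cup\{H_{0}\})$. It first proves $Z(\widetilde{\calG_{2}})\subseteq H_{0}$ by noting that $r(\widetilde{\calG_{1}})+r(\widetilde{\calG_{2}})$ is both $\le$ and $\ge r(\calF)$. Your bound on $r(S_{2})$ instead follows directly from $Z(S_{2})=Z(\calG_{2}\sqcup\cdots\sqcup\calG_{m})$, which you should state.
\end{itemize}

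The real difference is in sufficiency.

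\begin{itemize}
\item \emph{The paper's route.} It starts from the irreducible decomposition of $\calF\cup\{H_{0}\}$. It uses Lemma \ref{lem: irreducible decomposition}(2) to know that $\calF_{2}\cup\{H_{0}\}$ is a flat, and pulls both pieces back through the bijection of part (2). So the two pieces of $\calF_{0}$ are flats by construction, and their ranks come from (\ref{eqn: rank formula for section and lifting}).
\item \emph{Your route.} You push an arbitrary rank-additive splitting $S_{1}\sqcup S_{2}$ down to $H_{0}$. The rank identity on $\calF_{0}$ is the modular law $(V(S_{1})\cap V(H_{0}))+V(S_{2})=(V(S_{1})+V(S_{2}))\cap V(H_{0})=V(H_{0})$, valid because $V(S_{2})\subseteq V(H_{0})$. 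This avoids Lemma \ref{lem: irreducible decomposition}(2).
\end{itemize}

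The price is two checks you have not yet written.

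\begin{itemize}
\item You need $(S_{1})_{0}$ and $(S_{2}\setminus\{H_{0}\})_{0}$ to be disjoint flats. Uniqueness in Lemma \ref{lem: irreducible decomposition} is about decompositions into irreducible flats, not arbitrary subsets.
\item You need $\widetilde{(S_{1})_{0}}=S_{1}$. Otherwise $Z(S_{1})\not\subseteq H_{0}$ says nothing about the $Z(\widetilde{\calG_{i}})$.
\end{itemize}

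Both are quick.

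\begin{itemize}
\item A member of $\calA_{0}$ containing $Z$ of one piece lies in the flat $\calF_{0}$. If it belonged to the other piece, its cone would contain $V(H_{0})$ by the identity above. This gives flatness and disjointness at once.
\item $\widetilde{(S_{1})_{0}}\subseteq\widetilde{\calF_{0}}=\calF$ by part (2). Disjointness of traces then excludes $S_{2}\setminus\{H_{0}\}$, so $\widetilde{(S_{1})_{0}}\subseteq S_{1}$, and the reverse inclusion is trivial.
\end{itemize}
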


\begin{proof}
	 Since $ H_{0} \notin \calA $, we have $ \calA_{0} = (\calA \cup \{H_{0}\})^{H_{0}}  $ is a hyperplane arrangement in $ H_{0} $ with $ M(\calA_{0}) \subseteq M(\calA) $. 
	 
	 (1): For any flat $ \calG $ of $ \calA_{0} $,  $ \widetilde{\calG} = \{H \in \calA \mid Z(\calG) \subseteq H \} $ is a flat of $ \calA $, and $ Z(\widetilde{\calG}) \cap H_{0} = Z(\calG) $. Then the rank formula (\ref{eqn: rank formula for section and lifting}) follows directly from the following equality
	 	\begin{equation*}
	 		\dim Z(\widetilde{\calG}) = \begin{cases}
	 		\dim Z(\calG),  & Z(\widetilde{\calG}) \subseteq H_{0}, \\
	 		\dim Z(\calG)+1, & \text{otherwise}.
	 	\end{cases}
	 \end{equation*}
	
	(2): For any flat $ \calF $ of $ \calA $ satisfying the condition (\ref{eqn: condition for a flat to descend}), we have
	\begin{equation*}
		\calF_{0} = \{H \cap H_{0} \mid H \in \calA, Z(\calF) \cap H_{0} \subseteq H \} = \{H \cap H_{0} \mid H \in \calA, Z(\calF) \cap H_{0} \subseteq H \cap H_{0}\}
	\end{equation*}
	is a flat of $ \calA_{0} $. Conversely, for any flat $ \calG $ of $ \calA_{0} $, we have
	\begin{equation*}
		\widetilde{\calG} = \{H \in \calA \mid Z(\calG) \subseteq H \cap H_{0} \} = \{H \in \calA \mid Z(\widetilde{\calG}) \cap H_{0} \subseteq H \}
	\end{equation*}
	satisfies the condition (\ref{eqn: condition for a flat to descend}). Then the conclusion follows from 
	\begin{equation*}
		(\widetilde{\calG})_{0} = \{H \cap H_{0} \mid H \in \calA, H \cap H_{0} \in \calG\} = \calG
	\end{equation*}
	and
	\begin{equation*}
		\widetilde{\calF_{0}} = \{H \in \calA \mid Z(\calF_{0})  \subseteq H \cap H_{0} \} = \{H \in \calA \mid Z(\calF) \cap H_{0} \subseteq H \}  = \calF.
	\end{equation*}

	(3): Necessity: Since $ \calF \cup \{H_{0}\} $ is a flat of $ \calA \cup \{H_{0}\} $, $ \calF = \{H \in \calA \mid Z(\calF) \cap H_{0} \subseteq H\} $ is a flat of $ \calA $. Since $ \calF \cup \{H_{0}\} $ is irreducible, $ \calF $ is not a flat of $ \calA \cup\{H_{0}\} $, which implies that $ Z(\calF) \subseteq H_{0} $. In particular, $ \calF $ satisfies the condition (\ref{eqn: condition for a flat to descend}), and $ \calF_{0} $ is a nonempty flat of $ \calA_{0} $. If there exists an irreducible component $ \calG_{1} $ of $ \calF_{0} $ such that $ Z(\widetilde{\calG}) \nsubseteq H_{0} $, let $ \calG_{2} $ be the union of the remaining irreducible components of $ \calF_{0} $. Then 	
	\begin{equation*}
		r(\calG_{1})+r(\calG_{2}) = r(\calF_{0}),\ \calG_{1} \sqcup \calG_{2} = \calF_{0}.
	\end{equation*}
	By definition, $ \widetilde{\calG_{1}} \cup \widetilde{\calG_{2}}  = \calF $ and $ r(\widetilde{\calG_{1}}) + r(\widetilde{\calG_{2}}) \leq 	r(\calG_{1})+r(\calG_{2}) + 1 = r(\calF_{0})+1 = r(\calF) \leq r(\widetilde{\calG_{1}}) + r(\widetilde{\calG_{2}}) $. So $  r(\widetilde{\calG_{2}}) = r(\calG_{2}) + 1 $. So $ Z(\widetilde{\calG_{2}}) \subseteq H_{0} $. So $ \widetilde{\calG_{2}} \cup \{H_{0}\} $ is a flat of $ \calA \cup \{H_{0}\} $. Note that 
	\begin{equation*}
		r(\widetilde{\calG_{1}}) + r(\widetilde{\calG_{2}} \cup \{H_{0}\}) = r(\widetilde{\calG_{1}}) + r(\widetilde{\calG_{2}}) =  r(\calF) = r(\calF \cup \{H_{0}\}),\  \widetilde{\calG_{1}} \sqcup (\widetilde{\calG_{2}} \cup \{H_{0}\})= \calF\cup \{H_{0}\}.
	\end{equation*}
	 This contradicts the assumption that $ \calF \cup \{H_{0}\} $ is irreducible. So $ Z(\widetilde{\calG}) \subseteq H_{0} $ for any irreducible component $ \calG $ of $ \calF_{0} $. 
	
	Sufficiency: Since $ \calF $ is a flat of $ \calA $ and $ Z(\calF) \subseteq H_{0} $, $ \calF \cup \{H_{0}\} $ is a flat of $ \calA \cup \{H_{0}\} $. Let $ \calF_{1} \cup \{H_{0}\} $ be the irreducible component of $ \calF \cup \{H_{0}\} $ containing $ H_{0} $. Let $ \calF_{2} $ be the union of the remaining irreducible components of $ \calF \cup \{H_{0}\} $. By Lemma \ref{lem: irreducible decomposition}, $ \calF_{2} \cup \{H_{0}\} $ is a flat of $ \calA \cup \{H_{0}\} $. The same holds trivially for $ \calF_{1} $. So there exist flats $ \calG_{1},\calG_{2} $ of $ \calA_{0} $ such that $ \widetilde{\calG_{i}} = \calF_{i} $. Note that $ Z(\calF_{2}) \nsubseteq H_{0} $. By the rank formula (\ref{eqn: rank formula for section and lifting}) we have
	\begin{equation*}
		r(\calG_{1})+r(\calG_{2}) = r(\widetilde{\calG_{1}} \cup \{H_{0}\})-1 + r(\widetilde{\calG_{2}})  = r(\calF_{1} \cup \{H_{0}\})-1 + r(\calF_{2})	 = r(\calF \cup \{H_{0}\})-1 = r(\calF_{0}).
	\end{equation*}
	Together with $ \calG_{1} \sqcup \calG_{2} = \calF_{0} $, this rank equality implies that $ \calG_{2} $ is a union of some irreducible components of $ \calF_{0} $. Since $ Z(\widetilde{\calG_{2}}) \nsubseteq H_{0} $ but $ Z(\widetilde{\calG}) \subseteq H_{0} $ for all irreducible components $ \calG $ of $ \calF_{0} $, we have $ \calG_{2} = \emptyset $. Hence $ \calF \cup \{H_{0}\} $ is irreducible.
\end{proof}

For any subspace $ V \subseteq \CC^{n+1} $ of codimension $ \geq 2 $, the following lemma shows the existence of a generic hyperplane containing $ \PP(V) $ with respect to $ \calA $.

\begin{lem}\label{lem: existence of general section}
	Let $ V \subseteq \CC^{n+1} $ be a subspace of codimension $ \geq 2 $. Then there exists a hyperplane $ H_{0} \supseteq \PP(V) $ in $ \PP^{n} $ such that for any flat $ \calF \in L(\calA) $, $ Z(\calF) \subseteq H_{0} $ if and only if $ Z(\calF) \subseteq \PP(V) $.
\end{lem}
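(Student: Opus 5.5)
We argue by a dimension count in the dual projective space. Hyperplanes $H_{0} \supseteq \PP(V)$ correspond to nonzero linear functionals on $\CC^{n+1}$ vanishing on $V$, that is, to points of the projective space $\PP((\CC^{n+1}/V)^{*})$. Since $\codim V = c \geq 2$, this family has dimension $c-1 \geq 1$. One direction of the claim is automatic: if $Z(\calF) \subseteq \PP(V)$ then $Z(\calF) \subseteq H_{0}$ for every such $H_{0}$. So we must choose $H_{0}$ to avoid finitely many bad conditions, one for each flat $\calF \in L(\calA)$ with $Z(\calF) \nsubseteq \PP(V)$. The lattice $L(\calA)$ is finite, so there are only finitely many such conditions.

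Fix such a flat $\calF$. It may happen that $Z(\calF) = \emptyset$, i.e.\ $V(\calF) = 0$; then $Z(\calF) \subseteq \PP(V)$ holds trivially and $\calF$ imposes no condition. So assume $V(\calF) \nsubseteq V$ and $V(\calF) \neq 0$. The hyperplanes $H_{0} \supseteq \PP(V)$ that also contain $Z(\calF)$ correspond to functionals vanishing on $V + V(\calF)$. Because $V(\calF) \nsubseteq V$, the subspace $V + V(\calF)$ strictly contains $V$, so its annihilator is a proper linear subspace of $(\CC^{n+1}/V)^{*}$. Projectivized, this gives a proper closed subvariety of $\PP((\CC^{n+1}/V)^{*})$. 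In the extreme case $V + V(\calF) = \CC^{n+1}$ this subvariety is empty.

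A finite union of proper linear subspaces of the projective space $\PP((\CC^{n+1}/V)^{*})$ cannot cover it; this holds since $\CC$ is infinite, and the space is nonempty since $c \geq 1$. Hence some $H_{0}$ lies outside all the bad loci, and this $H_{0}$ has the required property.

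There is essentially no obstacle beyond bookkeeping. The hypothesis $\codim \geq 2$ is not needed to produce $H_{0}$ containing $\PP(V)$; codimension $1$ would force $H_{0} = \PP(V)$, and the statement would then hold trivially anyway. Presumably the hypothesis matters for the later application, where one wants $H_{0}$ to vary in a positive-dimensional family, or wants $H_{0} \notin \calA$. If $H_{0} \notin \calA$ is also required, we avoid the finitely many points $[l_{H}]$ for $H \in \calA$ as well. Since $c - 1 \geq 1$, this is possible whenever those points lie in the family. This extra avoidance is the only place where the codimension hypothesis genuinely enters.
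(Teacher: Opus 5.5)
Your proposal is correct and follows the paper's argument: choose a functional vanishing on $V$ that lies outside the finitely many proper subspaces annihilating $V+V(\calF)$, one for each flat with $Z(\calF)\nsubseteq\PP(V)$. One small correction to your closing remark: excluding $H_{0}\in\calA$ needs no extra step, because when $\codim V\geq 2$ the lemma's own conclusion already rules out $H_{0}=H\in\calA$: the flat $\{H\}$ would satisfy $Z(\{H\})=H\subseteq H_{0}$ but $H\nsubseteq\PP(V)$.
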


\begin{proof}
	For each flat $ \calF \in L(\calA) $ such that $ Z(\calF) \nsubseteq \PP(V) $, we have
	\begin{equation*}
		\{l \in (\CC^{n+1})^{*} \mid l(V(\calF)+V) = 0 \} \subsetneq \{l \in (\CC^{n+1})^{*} \mid l(V) = 0 \}.
	\end{equation*}
	So there exists $ l_{0} \in (\CC^{n+1})^{*} $ such that $ l_{0}(V) = 0 $ but $ l_{0}(V(\calF)+V) \neq 0 $ for any flat $ \calF \in L(\calA) $ such that $ Z(\calF) \nsubseteq \PP(V) $. Then $ H_{0} = \{l_{0} = 0\} \subseteq \PP^{n} $ meets the requirement.
\end{proof}

Taking $ V = 0 $ in Lemma \ref{lem: existence of general section}, by Lemma \ref{lem: dimension shift} and \ref{lem: relations between flats after restriction} we obtain the following well-known lemma.

\begin{lem}\label{lem: dimension shift for generic section}
	When $ n \geq 2 $, there exists a hyperplane $ H_{0} $ in $ \PP^{n} $ such that $ Z(\calF) \nsubseteq H_{0} $ for any flat $ \calF \in L(\calA) $ of rank $ \leq n $. Keeping the same notation as Lemma \ref{lem: relations between flats after restriction} one has
	\begin{enumerate}
		\item $ M(\calA_{0}) \subseteq M(\calA) $ and for any $ 0 \leq p \leq n-2 $,
		\begin{equation*}
			H^{p}(M(\calA),\LL) \simeq H^{p}(M(\calA_{0}),\LL).
		\end{equation*}
		\item The map $ \calF \mapsto \calF_{0} $ induces an isomorphism of ranked posets
		\begin{equation*}
			\{\calF \in L(\calA) \mid r(\calF) \leq n-1\}	 \xrightarrow{\simeq} \{\calG \in L(\calA_{0}) \mid r(\calG) \leq n-1\}.
		\end{equation*}
	\end{enumerate}
\end{lem}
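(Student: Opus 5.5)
Apply Lemma \ref{lem: existence of general section} with $V = 0$; this space has codimension $n+1 \geq 2$. It gives a hyperplane $H_{0}$ such that $Z(\calF) \subseteq H_{0}$ holds only when $Z(\calF) = \emptyset$, i.e. only when $r(\calF) = n+1$. In particular $H_{0} \notin \calA$ and $Z(\calF) \nsubseteq H_{0}$ for every flat of rank $\leq n$, which is the first claim. Let $\calA_{0}$ be as in Lemma \ref{lem: relations between flats after restriction}, and put $\calA' = \calA \cup \{H_{0}\}$ with $m(H_{0}) = 1$, so that $\calA_{0} = \calA'^{H_{0}}$ and $M(\calA'\setminus\{H_{0}\}) = M(\calA)$.

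For (2), a flat $\calF$ of rank $\leq n$ satisfies condition (\ref{eqn: condition for a flat to descend}). Indeed, $Z(\calF)\cap H_{0}$ is a proper hyperplane section of $Z(\calF)$ (nonempty when $n \geq 2$ and $r(\calF) \leq n-1$, and handled separately for rank $n$). Any $H \in \calA$ containing it either contains $Z(\calF)$ or cuts $Z(\calF)$ in exactly $Z(\calF)\cap H_{0}$. In the second case the flat $\calF\cup\{H\}$ has $Z \subseteq H_{0}$ while having rank $r(\calF)+1 \leq n$ when $r(\calF)\leq n-1$, contradicting genericity. So by Lemma \ref{lem: relations between flats after restriction}(2), $\calF \mapsto \calF_{0}$ is injective on flats of rank $\leq n-1$, and by the rank formula (\ref{eqn: rank formula for section and lifting}) it preserves rank. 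For surjectivity onto flats $\calG$ of $\calA_{0}$ with $r(\calG) \leq n-1$: since $r(\widetilde{\calG}) \leq r(\calG)+1 \leq n$, genericity forces $Z(\widetilde{\calG}) \nsubseteq H_{0}$, hence $r(\widetilde{\calG}) = r(\calG)$ and $\calG = (\widetilde{\calG})_{0}$. Order is clearly preserved both ways, so the map is an isomorphism of ranked posets.

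For (1), apply Lemma \ref{lem: dimension shift} to $\calA'$, $H = H_{0}$ and $k = n$. This gives an isomorphism $H^{p}(M(\calA),\LL) \to H^{p}(M(\calA_{0}),\LL)$ for $p \leq n-2$, which is exactly the restriction map along $M(\calA_{0}) \subseteq M(\calA)$. The hypothesis to check is that every resonant flat $\calF'$ of $\calA'$ properly containing $H_{0}$ satisfies: $\calA'^{\calF'}$ is $(n+1-r(\calF'))$-vanishing. Such an $\calF'$ is irreducible, so by Lemma \ref{lem: relations between flats after restriction}(3) it has the form $\calF \cup \{H_{0}\}$ with $Z(\calF) \subseteq H_{0}$. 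Genericity then forces $Z(\calF) = \emptyset$, so $r(\calF') = n+1$, contradicting the requirement $r \leq n$ in the definition of a resonant flat. Hence there are no such flats, the hypothesis holds vacuously, and (1) follows.

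The main obstacle is this vacuity check, together with confirming that Lemma \ref{lem: dimension shift} is applicable as stated. One should check that the reduction inside its proof ("we may assume $\calA' \notin \RF$") is harmless: if $\calA'$ itself were resonant, it would have rank $\leq n$, but $\calA' \ni H_{0}$ is irreducible only in degenerate cases. Those cases should be treated directly; for instance, if all hyperplanes of $\calA$ pass through a common point, one can argue with the product decomposition $M(\calA) = M(\calA_{\calA}) \times \CC^{n+1-r(\calA)}$.
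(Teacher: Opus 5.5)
Your proposal is correct and follows the paper's own one-line argument: take $V=0$ in Lemma~\ref{lem: existence of general section}, read off (2) from Lemma~\ref{lem: relations between flats after restriction}, and get (1) from Lemma~\ref{lem: dimension shift} applied to $\calA\cup\{H_{0}\}$ with $k=n$, whose hypothesis holds vacuously as you show. Two small remarks: rank-$n$ flats generally do \emph{not} satisfy (\ref{eqn: condition for a flat to descend}), because there $Z(\calF)\cap H_{0}=\emptyset$ and the condition would force $\calF=\calA$, but you only need ranks $\leq n-1$; and your closing worry is moot, because for $\calA\neq\emptyset$ the flat $\calA\cup\{H_{0}\}$ itself properly contains $H_{0}$, so your vacuity argument already shows it is never resonant (for instance, it is reducible whenever $Z(\calA)\neq\emptyset$), and no degenerate case needs separate treatment.
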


 When $ V = V(\calI) $ for some irreducible flat $ \calI \in \IF(\calA) $, we have the following corollary. 

\begin{cor}\label{cor: irreducible section and relation among flats}
	Let $ \calI \in \IF(\calA) $ be an irreducible flat of rank $ \geq 2 $. Then there exists a hyperplane $ H_{0} \supseteq Z(\calI) $ in $ \PP^{n} $ such that for any flat $ \calF \in L(\calA) $, $ Z(\calF) \subseteq H_{0} $ if and only if $ \calF \supseteq \calI $. Furthermore, with such an $ H_{0} $ fixed, keeping the same notation as Lemma \ref{lem: relations between flats after restriction}, the following statements hold:
	\begin{enumerate}
		\item The map $ \calG \mapsto \widetilde{\calG} $ induces a bijection
		\begin{equation*}
			\{\calG \in \IF(\calA_{0}) \mid \calG \supseteq \calI_{0} \} \xrightarrow{1:1} \{\calF \in \IF(\calA) \mid \calF \supseteq \calI\}.
		\end{equation*}
		Furthermore, $ r(\widetilde{\calG}) = r(\calG) + 1 $ for any irreducible flat $ \calG \in \IF(\calA_{0}) $ containing $ \calI_{0} $. 
		\item The map $ \calF \mapsto \calF \cup \{H_{0}\} $ induces a rank-preserving bijection
		\begin{equation*}
			\{\calF \in \IF(\calA) \mid \calF \supseteq \calI \} \xrightarrow{1:1} \{\calF' \in \IF(\calA \cup \{H_{0}\}) \mid \calF' \supsetneq \{H_{0}\}\}.
		\end{equation*}	
	\end{enumerate}
\end{cor}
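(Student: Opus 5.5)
The plan is to obtain $H_{0}$ from Lemma \ref{lem: existence of general section}, and then derive both bijections from Lemma \ref{lem: relations between flats after restriction}(2),(3) together with the irreducible decomposition of Lemma \ref{lem: irreducible decomposition}.

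\emph{Choice of $H_{0}$.} I apply Lemma \ref{lem: existence of general section} with $V=V(\calI)$. Its codimension is $r(\calI)\geq 2$, and $\PP(V)=Z(\calI)$. For a flat $\calF$, $Z(\calF)\subseteq Z(\calI)$ holds iff every $H\in\calI$ contains $Z(\calF)$. Since $\calF$ is a flat, this is equivalent to $\calI\subseteq\calF$. This gives the first claim. In particular $Z(\calI)\subseteq H_{0}$, so $\calI$ satisfies condition (\ref{eqn: condition for a flat to descend}), and $\widetilde{\calI_{0}}=\calI$.

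\emph{Part (2).} Let $\calF'\supsetneq\{H_{0}\}$ be irreducible in $\calA\cup\{H_{0}\}$, and write $\calF'=\calF\cup\{H_{0}\}$ with $\calF\neq\emptyset$. By Lemma \ref{lem: relations between flats after restriction}(3), $\calF$ is a flat with $Z(\calF)\subseteq H_{0}$, so $\calF\supseteq\calI$ and $r(\calF')=r(\calF)$. To see that $\calF$ is irreducible, take its decomposition $\calF=\calF_{1}\sqcup\cdots\sqcup\calF_{m}$. By Lemma \ref{lem: irreducible decomposition}(1), $\calI\subseteq\calF_{1}$, say. Then $Z(\calF_{1})\subseteq Z(\calI)\subseteq H_{0}$, so $\calF_{1}\cup\{H_{0}\}$ is a flat of the same rank as $\calF_{1}$. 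Hence $\calF'=(\calF_{1}\cup\{H_{0}\})\sqcup(\calF_{2}\sqcup\cdots\sqcup\calF_{m})$ with additive ranks. Irreducibility of $\calF'$ forces $m=1$.

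Conversely, let $\calF\supseteq\calI$ be irreducible. Then $Z(\calF)\subseteq H_{0}$, so by Lemma \ref{lem: relations between flats after restriction}(3) I need each irreducible component $\calG$ of $\calF_{0}$ to satisfy $\widetilde{\calG}\supseteq\calI$. Suppose some component $\calG_{j}$ fails this. Then $r(\widetilde{\calG_{j}})=r(\calG_{j})$ by (\ref{eqn: rank formula for section and lifting}). Let $\calG'$ be the union of the other components. If $\calG'=\emptyset$, then $\widetilde{\calG_{j}}=\calF\supseteq\calI$, a contradiction. Otherwise
\[
r(\widetilde{\calG_{j}})+r(\widetilde{\calG'})\leq r(\calG_{j})+r(\calG')+1=r(\calF_{0})+1=r(\calF),
\]
while $\widetilde{\calG_{j}}\cup\widetilde{\calG'}=\calF$ with both proper subsets. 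Subadditivity of rank turns this into an equality, so $\calF$ is reducible, a contradiction. This is the same argument as in the necessity part of Lemma \ref{lem: relations between flats after restriction}(3).

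\emph{Part (1).} Let $\calG\in\IF(\calA_{0})$ with $\calG\supseteq\calI_{0}$. Then $\widetilde{\calG}\supseteq\widetilde{\calI_{0}}=\calI$, so $Z(\widetilde{\calG})\subseteq H_{0}$ and $r(\widetilde{\calG})=r(\calG)+1$ by (\ref{eqn: rank formula for section and lifting}). Since $(\widetilde{\calG})_{0}=\calG$ is its own unique component, Lemma \ref{lem: relations between flats after restriction}(3) gives $\widetilde{\calG}\cup\{H_{0}\}$ irreducible, and part (2) then gives $\widetilde{\calG}$ irreducible.

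Conversely, let $\calF\supseteq\calI$ be irreducible. By part (2) and Lemma \ref{lem: relations between flats after restriction}(3), every component $\calG_{i}$ of $\calF_{0}$ has $\widetilde{\calG_{i}}\supseteq\calI$, hence $\calG_{i}\supseteq\calI_{0}\neq\emptyset$. The components are disjoint, so there is only one, i.e.\ $\calF_{0}$ is irreducible, and it contains $\calI_{0}$. Bijectivity then follows from the bijection $\calF\mapsto\calF_{0}$, $\calG\mapsto\widetilde{\calG}$ in Lemma \ref{lem: relations between flats after restriction}(2).

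The step needing most care is the forward direction of (2): showing that irreducibility of $\calF$ forces every component of $\calF_{0}$ to lift into $H_{0}$. This is where the rank-counting argument has to be run carefully.
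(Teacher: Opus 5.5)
Your proof is correct, but it runs in the opposite order from the paper's.

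\textbf{The paper's order.} It proves (1) first and directly. For irreducible $\calG\supseteq\calI_{0}$ it splits $\widetilde{\calG}=\calF_{1}\sqcup\calF_{2}$ with $\calF_{1}\supseteq\calI$. It then uses a separate linear-algebra dimension count, invoking $r(\calI)\geq 2$ a second time, to show that $\calF_{2}$ satisfies (\ref{eqn: condition for a flat to descend}), so that $\calG$ would split with additive rank. Surjectivity is a rank count on $\calF_{0}=\calG_{1}\sqcup\calG_{2}$. Part (2) is then read off from (1) and Lemma \ref{lem: relations between flats after restriction}(3).

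\textbf{Your order.} You prove (2) first. The direction ``$\calF\cup\{H_{0}\}$ irreducible $\Rightarrow$ $\calF$ irreducible'' is nearly free once you note that the component of $\calF$ containing $\calI$ already lies in $H_{0}$ and can absorb $H_{0}$ without changing rank. Your converse reruns the rank count from the necessity half of Lemma \ref{lem: relations between flats after restriction}(3), now with $\calF$ itself irreducible. Part (1) then follows formally: the sufficiency half of Lemma \ref{lem: relations between flats after restriction}(3), together with your first half of (2), gives well-definedness. Disjointness of components, each containing $\calI_{0}\neq\emptyset$, gives surjectivity.

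\textbf{What each route buys.} In your route the paper's ad hoc dimension count disappears. The descent of the complementary components is already built into the sufficiency proof of Lemma \ref{lem: relations between flats after restriction}(3), where it comes for free because the union of the remaining components together with $H_{0}$ is a flat of $\calA\cup\{H_{0}\}$. As a result, $r(\calI)\geq 2$ is used only to produce $H_{0}$. The paper's route instead obtains (1) without passing through the larger arrangement $\calA\cup\{H_{0}\}$.

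\textbf{One implicit step.} Both arguments use, without saying so, that $H_{0}\notin\calA$, which Lemma \ref{lem: relations between flats after restriction} requires. It holds because if $H_{0}=H\in\calA$, the flat $\{H\}$ would satisfy $Z(\{H\})\subseteq H_{0}$ and hence contain $\calI$. That is impossible since $r(\calI)\geq 2$.
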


\begin{proof}
	The existence of $ H_{0} $ follows directly by taking $ V = V(\calI) $ in Lemma \ref{lem: relations between flats after restriction}. We further note that for any flat $ \calG $ of $ \calA_{0} $,
	\begin{equation*}
		Z(\widetilde{G}) \subseteq H_{0} \iff Z(\widetilde{G}) \subseteq Z(\calI) \iff \widetilde{\calG} \supseteq \calI \iff \calG \supseteq \calI_{0}. 
	\end{equation*}

	(1): We first verify that this map is well-defined. For any irreducible flat $ \calG $ of $ \calA_{0} $ containing $ \calI_{0} $, $ \widetilde{G} $ is a flat of $ \calA $ containing $ \calI $. Let $ \calF_{1} $ be the irreducible component of $ \calA $ containing $ \calI $. Let $ \calF_{2} $ be the union of the remaining irreducible components of $ \widetilde{\calG} $. Then
	\begin{equation*}
		\calF_{1} \supseteq \calI,\ \calF_{2} \cap \calI = \emptyset,\ r(\calF_{1})+r(\calF_{2}) = r(\widetilde{\calG}),\ \calF_{1} \sqcup \calF_{2} = \widetilde{\calG}.
	\end{equation*}
	
	We further denote by $ \calF_{3} = \{H \in \calA \mid Z(\calF_{2}) \cap H_{0} \subseteq H \} $. Then $ Z(\calF_{2}) \cap H_{0} \subseteq Z(\calF_{3}) \subseteq Z(\calF_{2}) $. If $ Z(\calF_{3}) = Z(\calF_{2}) \cap H_{0} \subseteq H_{0} $, then $ \calF_{3} \supseteq \calI $ and $ Z(\calF_{3}) \subseteq Z(\calI \cup \calF_{2}) $. However, since $ r(\calF_{1}) + r(\calF_{2}) = r(\calF_{1}\cup\calF_{2}) $, we have $ \CC^{n+1} = V(\calF_{1})+V(\calF_{2}) = V(\calI) + V(\calF_{2}) $, which implies that $ \dim V(\calF_{2})/V(\calI \cup \calF_{2}) = \codim V(\calI) = r(\calI) \geq 2 $. So $ Z(\calI \cup \calF_{2}) \subsetneq Z(\calF_{2}) \cap H_{0} $, a contradiction. So we have $ Z(\calF_{3}) = Z(\calF_{2}) $, or equivalently, $ \calF_{2} $ satisfies the condition (\ref{eqn: condition for a flat to descend}). So by Lemma \ref{lem: relations between flats after restriction}(2), there exist flats $ \calG_{1},\calG_{2} $ of $ \calA_{0} $ such that $ \widetilde{\calG_{i}} = \calF_{i} $. By definition we have $ \calG = \calG_{1} \cup \calG_{2} $. By the rank formula (\ref{eqn: rank formula for section and lifting}) we have $ r(\calG) = r(\widetilde{\calG}) - 1 = (r(\calF_{1})-1)+r(\calF_{2}) = r(\calG_{1}) + r(\calG_{2}) $. Since $ \calG $ is irreducible, we have $ \calG_{2} = \emptyset $, which implies that $ \widetilde{\calG} = \calF_{1} $ is irreducible.
	
	Now it suffices to prove that this map is surjective.  For any irreducible flat $ \calF $ of $ \calA $ containing $ \calI $, $ \calF_{0}$ is a flat of $ \calA_{0} $ containing $ \calI_{0} $. Let $ \calG_{1} $ be the irreducible component of $ \calF_{0} $ containing $ \calI_{0} $. Let $ \calG_{2} $ be the union of the remaining irreducible components of $ \calF_{0} $. Then 
		\begin{equation*}
		\calG_{1} \supseteq \calI_{0},\ \calG_{2} \cap \calI_{0} = \emptyset,\ r(\calG_{1})+r(\calG_{2}) = r(\calF_{0}),\ \calG_{1} \sqcup \calG_{2} = \calF_{0}.
	\end{equation*}
	So $ \widetilde{\calG_{1}} \cup \widetilde{\calG_{2}} = \widetilde{\calF_{0}} = \calF $, and $ r(\widetilde{\calG_{1}}) + r(\widetilde{\calG_{2}}) =  r(\calG_{1})+1+r(\calG_{2}) = r(\calF_{0}) + 1 = r(\calF) $. Since $ \calF $ is irreducible, we have $ \widetilde{\calG_{2}} = \emptyset $, which implies that $ \calF_{0} = \calG_{1} $ is irreducible. So the map is a bijection. The rank relation follows directly from the rank formula (\ref{eqn: rank formula for section and lifting}).
	
	(2): By Lemma \ref{lem: relations between flats after restriction}(3), it suffices to prove that for a flat $ \calF $ of $ \calA $ containing $ \calI $, $ \calF $ is irreducible if and only if $ \calG \supseteq \calI_{0} $ for any irreducible component $ \calG $ of $ \calF_{0} $. This follows directly from part (1), since the latter is equivalent to $ \calF_{0} $ being irreducible.
\end{proof}

In light of Lemma \ref{lem: dimension shift for generic section}, one may conjecture that $ h^{p}(M(\calA),\LL) $ depends only on flats of rank $ \leq p+1 $. However, the following proposition reveals that higher-rank flats also affect low-degree cohomology.

\begin{prop}\label{prop: k-vanishing given by higher ranked resonant flats}
	Let $ \calA $ be a hyperplane arrangement in $ \PP^{n} $. If there exists $ \calI \in \IF(\calA) $ such that $ \calF \nsubseteq \calI $ for any $ \calF \in \RF(\calA,\LL) $, then $ \calA $ is $ (n+1-r(\calI)) $-vanishing.
\end{prop}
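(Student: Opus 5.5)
The plan is a \emph{restriction argument}: induct on $r(\calI)$, using Corollary \ref{cor: irreducible section and relation among flats} to cut $\PP^{n}$ by a hyperplane $H_{0}$ through $Z(\calI)$, and Lemma \ref{lem: dimension shift} to transport low-degree cohomology from $M(\calA)$ to $M(\calA_{0})$.

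One point about the hypothesis needs flagging first. The condition the argument actually uses is that \emph{no resonant flat contains $\calI$}, i.e.\ $\calI \nsubseteq \calF$ for every $\calF \in \RF(\calA,\LL)$. The reason is the case $r(\calI)=1$. For $\calI=\{H\}$, the literal condition $\calF \nsubseteq \{H\}$ only says $m(H)\neq 1$. That cannot force $n$-vanishing: for a line arrangement, $H^{1}$ can be nonzero even when every $m(H)\neq1$. By contrast, $\calI \nsubseteq \calF$ for all resonant $\calF$ is exactly the Cohen--Dimca--Orlik condition, so I will prove the statement with $\calI \nsubseteq \calF$ for all $\calF \in \RF(\calA,\LL)$.

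Two degenerate cases are immediate. If $r(\calI)=n+1$, the conclusion is $0$-vanishing, which is empty. If $r(\calI)=1$, say $\calI=\{H\}$, the hypothesis says $H$ lies in no resonant flat. Then Corollary \ref{cor: k-vanishing given by R(H)} with $k=n$ applies vacuously and gives $n$-vanishing; equivalently, this is Theorem \ref{thm: CDO condition for complex rank one}.

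For the inductive step, take $2\le r(\calI)\le n$ and choose $H_{0}\supseteq Z(\calI)$ as in Corollary \ref{cor: irreducible section and relation among flats}. Set $\calA'=\calA\cup\{H_{0}\}$ and give $H_{0}$ monodromy $1$, so that $M(\calA'\setminus\{H_{0}\})=M(\calA)$ and $\calA'^{H_{0}}=\calA_{0}$. By part (2) of that corollary, the irreducible flats of $\calA'$ properly containing $H_{0}$ are exactly the sets $\calF\cup\{H_{0}\}$ with $\calF\in\IF(\calA)$, $\calF\supseteq\calI$. The correspondence preserves both rank and monodromy, since $m(H_{0})=1$. By hypothesis none of these flats is resonant. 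Hence Lemma \ref{lem: dimension shift}, applied to $\calA'$ with $k=n$, holds vacuously and yields
\begin{equation*}
H^{p}(M(\calA),\LL)\simeq H^{p}(M(\calA_{0}),\LL), \qquad 0\le p\le n-2.
\end{equation*}

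Next I check that $(\calA_{0},\calI_{0})$ satisfies the hypothesis in $\PP^{n-1}$. By part (1) of Corollary \ref{cor: irreducible section and relation among flats}, $\calI_{0}$ is irreducible of rank $r(\calI)-1$. Every irreducible flat $\calG\supseteq\calI_{0}$ of $\calA_{0}$ is of the form $\calG=\calF_{0}$ with $\calF=\widetilde{\calG}\supseteq\calI$ irreducible and $r(\calF)=r(\calG)+1$. Since $m(\calG)=m(\calF)$, a resonant $\calG$ (which has $r(\calG)\le n-1$) would force $\calF$ to be resonant, contradicting the hypothesis. By induction on $r(\calI)$, $\calA_{0}$ is therefore $((n-1)+1-(r(\calI)-1))=(n+1-r(\calI))$-vanishing. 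Because $r(\calI)\ge2$, the degrees $p<n+1-r(\calI)$ all satisfy $p\le n-2$, so the displayed isomorphism transfers the vanishing to $\calA$.

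The main obstacle is the bookkeeping in the inductive step. One must verify that resonance, which requires $m=1$ and rank $\le$ the ambient dimension, corresponds correctly under both bijections of Corollary \ref{cor: irreducible section and relation among flats}, including the rank shift by one between $\calA$ and $\calA_{0}$. One must also apply Lemma \ref{lem: dimension shift} to $\calA'$ rather than $\calA$, and confirm that its standing reduction $\calA'\notin\RF(\calA',\LL)$ causes no trouble. If $\calA'$ itself were resonant it would be a flat containing $H_{0}$, and the same correspondence then shows it is excluded, or that it is handled by the localization reduction in that lemma's proof.
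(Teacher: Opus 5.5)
Your proposal is correct and follows the paper's proof essentially step for step. The paper also inducts on $r(\calI)$ and settles the base case via the $R(H)$ corollaries. For the inductive step it likewise takes the section $H_{0}\supseteq Z(\calI)$ from Corollary \ref{cor: irreducible section and relation among flats} and applies Lemma \ref{lem: dimension shift} to $\calA\cup\{H_{0}\}$, then invokes induction on $(\calA_{0},\calI_{0})$. You are also right that the hypothesis must be read as $\calI\nsubseteq\calF$ for all $\calF\in\RF(\calA,\LL)$: the paper's own proof uses exactly this (``the absence of resonant flats of $\calA$ containing $\calI$''), and the literal version already fails for a pencil of $k\geq 3$ lines with all $m(H)\neq 1$ and $\calI=\{H\}$.
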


\begin{proof}
	We prove the conclusion by induction on $ r(\calI) $.
	
	When $ r(\calI) = 1 $, we have $  \calI=\{H\} $ for some $ H \in \calA $. Then $ H \notin \calF $ for any $ \calF \in \RF(\calA,\LL) $. So by Corollary \ref{cor: n-vanishing for R(H)} we have $ \calA $ is $ n $-vanishing.
	
	Suppose the conclusion holds when $ r(\calI) < k $. When $ r(\calI) = k > 1 $, by Corollary \ref{cor: irreducible section and relation among flats}, there exists a hyperplane $ H_{0} \supseteq Z(\calI) $ in $ \PP^{n} $ such that for any flat $ \calF \in L(\calA) $, $ Z(\calF) \subseteq H_{0} $ if and only if $ \calF \supseteq \calI $. We further keep the same notation as Corollary \ref{cor: irreducible section and relation among flats}. 
	
	By Corollary \ref{cor: irreducible section and relation among flats}(2), the absence of resonant flats of $ \calA $ containing $ \calI $ implies that no resonant flat of $ \calA \cup \{H_{0}\} $ with rank $ > 1 $ contains $ H_{0} $. So by Lemma \ref{lem: dimension shift}, it suffices to prove that $ \calA_{0} = (\calA \cup \{H_{0}\})^{H_{0}} $ is $ (n+1-k) $-vanishing. By Corollary \ref{cor: irreducible section and relation among flats}(1), $ \calI_{0} $ is an irreducible flat of $ \calA_{0} $ with rank $ k-1 $. Furthermore, the absence of resonant flats of $ \calA $ containing $ \calI $ implies that $ \calG \nsubseteq \calI_{0} $ for any $ \calG \in \RF(\calA_{0},\LL) $. So by the induction hypothesis, $ \calA_{0} $ is $ (n-1)+1-(k-1) = (n+1-k) $-vanishing. Hence the conclusion holds by induction.
\end{proof}

\subsection{Lifting Technique and Proof of Theorem \ref{main: 2-vanishing}}\label{subsection: lifting}

We first introduce the definition of liftings. Identify $ \PP^{n} $ with the hyperplane $ H_{0} = \{x_{n+1} = 0\} $ in $ \PP^{n+1} $. A \textbf{lifting of a hyperplane} $ H \subseteq \PP^{n} $ is a hyperplane $ \widetilde{H} \subseteq \PP^{n+1} $ with $ \widetilde{H} \cap H_{0} = H $. A \textbf{lifting of a hyperplane arrangement} $ \calA $ in $ \PP^{n} $ is a hyperplane arrangement $ \widetilde{\calA} $ in $ \PP^{n+1} $ obtained by choosing a lifting $ \widetilde{H} $ for each hyperplane $ H $ in $ \calA $. 

Fixing a lifting $ \widetilde{\calA} $ of $ \calA $, we can canonically extend $ \LL $ to a local system on $ M(\widetilde{\calA}) $ by taking $ m(\widetilde{H}) = m(H) $, which we still denote by $ \LL $. Since $ H_{0} \notin \widetilde{A} $, we are in the setting of Lemma \ref{lem: relations between flats after restriction}. Following its notation, we denote $ \widetilde{S} = \{\widetilde{H} \mid H \in S \} $ for any subset $ S \subseteq \calA $. The following lemma then follows directly from Lemma \ref{lem: dimension shift} and Lemma \ref{lem: relations between flats after restriction}.

\begin{lem}\label{lem: dimension shift for lifting}
	Let $ \widetilde{\calA} $ be a lifting of $ \calA $. 	Let $ k \leq n+1 $ be a positive integer. If for any nonempty flat $ \calF \in L(\calA) $ satisfying that
	\begin{equation}\label{eqn: condition for nonresonant lifting}
		r(\calF) \leq n,\ m(\calF) = 1, \text{ and } r(\widetilde{\calG}) = r(\calG)+1 \text{ for any irreducible component } \calG \text{ of } \calF,
	\end{equation}
	the arrangement $ \calA^{\calF} $ is $ (k-r(\calF)) $-vanishing, then the restriction map
	\begin{equation*}
		H^{p}(M(\widetilde{A}),\LL) \longrightarrow H^{p}(M(\calA),\LL)
	\end{equation*}
	is an isomorphism for any $ 0 \leq p \leq k-2 $ and an injection for $ p = k-1 $. 
\end{lem}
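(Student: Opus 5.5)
The plan is to apply Lemma \ref{lem: dimension shift} to the arrangement $\calA' = \widetilde{\calA} \cup \{H_{0}\}$ in $\PP^{n+1}$, with the distinguished hyperplane $H_{0}$. We extend $\LL$ to $M(\calA')$ by $m(H_{0}) = 1$. Then $\LL$ on $M(\calA' \setminus \{H_{0}\}) = M(\widetilde{\calA})$ is the given extension. Moreover, $(\calA')^{H_{0}} = \calA_{0} = \calA$, because $\widetilde{H} \cap H_{0} = H$, and $M(\calA) \subseteq M(\widetilde{\calA})$. So the restriction map in the statement is exactly the map of Lemma \ref{lem: dimension shift}, with $n$ replaced by $n+1$. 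That lemma requires $k \leq n+1$ and yields the isomorphism for $p \leq k-2$ and the injection for $p = k-1$. It therefore remains to verify its hypothesis: for every resonant flat $\calF'$ of $\calA'$ properly containing $H_{0}$, the restriction $(\calA')^{\calF'}$ must be $(k+1-r(\calF'))$-vanishing.

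The first step is to identify these resonant flats using Lemma \ref{lem: relations between flats after restriction}(3) applied to $\widetilde{\calA}$ and $H_{0}$. Any irreducible flat $\calF' \supsetneq \{H_{0}\}$ has the form $\calF' = \widetilde{\calF} \cup \{H_{0}\}$. Here $\widetilde{\calF}$ is a flat of $\widetilde{\calA}$ with $Z(\widetilde{\calF}) \subseteq H_{0}$, and $Z(\widetilde{\calG}) \subseteq H_{0}$ for every irreducible component $\calG$ of $\calF = \widetilde{\calF}_{0}$. By the rank formula (\ref{eqn: rank formula for section and lifting}), the last condition is exactly $r(\widetilde{\calG}) = r(\calG)+1$. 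By Lemma \ref{lem: relations between flats after restriction}(2), $\calF$ is a nonempty flat of $\calA$ with $r(\calF') = r(\widetilde{\calF}) = r(\calF)+1$. Resonance of $\calF'$ requires $r(\calF') \leq n+1$, which gives $r(\calF) \leq n$. It also requires $m(\calF') = m(\calF) = 1$, since $m(H_{0})=1$ and $m(\widetilde{H}) = m(H)$. Hence $\calF$ satisfies condition (\ref{eqn: condition for nonresonant lifting}).

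The second step is to identify $(\calA')^{\calF'}$ with $\calA^{\calF}$. We have $Z(\calF') = Z(\widetilde{\calF}) \cap H_{0} = Z(\widetilde{\calF})$, and this equals $Z(\calF)$ inside $H_{0} = \PP^{n}$, since $Z(\widetilde{\calF}) \subseteq H_{0}$ and $Z(\widetilde{\calF}) \cap H_{0} = Z(\calF)$. The hyperplanes of $\calA' \setminus \calF'$ are the $\widetilde{H}$ with $H \notin \calF$, and $\widetilde{H} \cap Z(\calF) = \widetilde{H} \cap H_{0} \cap Z(\calF) = H \cap Z(\calF)$. So $(\calA')^{\calF'} = \calA^{\calF}$ as arrangements in $Z(\calF)$, with the same monodromies. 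The required vanishing degree is $k+1-r(\calF') = k - r(\calF)$, which is exactly the assumption of the present lemma.

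I expect the main obstacle to be bookkeeping rather than anything conceptual. One point is that Lemma \ref{lem: relations between flats after restriction}(3) should be applied in the correct direction, namely that every irreducible $\calF' \ni H_{0}$ arises from such an $\calF$; the only direction needed is necessity. Another point is to check that the local systems on $M(\calA^{\calF})$ obtained from the two descriptions coincide, which follows because both are defined by restricting the monodromy map. A further degenerate case is $r(\calF') = n+2$, corresponding to $\calA' \in \RF$; it is excluded by the rank condition in the definition of resonance, so it needs no separate treatment here.
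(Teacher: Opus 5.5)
Your proposal is correct and follows the same route as the paper, which only remarks that the lemma ``follows directly from'' Lemma \ref{lem: dimension shift} (applied to $\widetilde{\calA}\cup\{H_{0}\}$ in $\PP^{n+1}$) and Lemma \ref{lem: relations between flats after restriction}. You supply the omitted bookkeeping: parts (2), (3) and the rank formula match the resonant flats properly containing $H_{0}$ with flats satisfying (\ref{eqn: condition for nonresonant lifting}), and you identify $(\widetilde{\calA}\cup\{H_{0}\})^{\calF'}$ with $\calA^{\calF}$, including the degree shift $k+1-r(\calF') = k-r(\calF)$.
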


In particular, we focus on liftings given by a bipartition, which are constructed in the following lemma.

\begin{lem}\label{lem: lifting given by partition}
	Given a bipartition $ \calA = \calA_{1} \sqcup \calA_{2} $, there exists a lifting $ \widetilde{A} $ of $ \calA $ such that for any subset $ S \subseteq \calA $, 
	\begin{equation*}
		r(\widetilde{S}) = r(S) \iff r(S) = r(S \cap \calA_{1}) + r(S \cap \calA_{2}).
	\end{equation*}
\end{lem}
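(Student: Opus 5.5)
The plan is an explicit linear construction. Choose coordinates with $\PP^{n} = H_{0} = \{x_{n+1}=0\}\subseteq\PP^{n+1}$, and let $l_{H}(x_{0},\dots,x_{n})$ be a defining linear form of $H \in \calA$. Set
\begin{equation*}
\widetilde{H} = \{l_{H} = 0\} \ \text{ for } H \in \calA_{1}, \qquad \widetilde{H} = \{l_{H} + c_{H}x_{n+1} = 0\} \ \text{ for } H \in \calA_{2},
\end{equation*}
with the constants $c_{H}$ chosen generically. Then each $\widetilde{H}$ is a lifting of $H$, and we must check the rank identity for every $S \subseteq \calA$.

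Write $S_{i} = S \cap \calA_{i}$. In $(\CC^{n+2})^{*}$ the span of the forms of $\widetilde{S}$ is
\begin{equation*}
\Span\{(l_{H},0) : H \in S_{1}\} + \Span\{(l_{H},c_{H}) : H \in S_{2}\}.
\end{equation*}
The projection forgetting the last coordinate maps this span onto $\Span\{l_{H} : H\in S\}$, which has dimension $r(S)$. So $r(\widetilde{S}) = r(S)$ or $r(S)+1$, and $r(\widetilde{S}) = r(S)$ exactly when the last coordinate vanishes on the kernel. That kernel consists of linear relations $\sum_{H\in S} a_{H}l_{H}=0$, and the condition is that every such relation satisfies $\sum_{H\in S_{2}} a_{H}c_{H} = 0$.

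Both directions then follow. If $r(S) = r(S_{1})+r(S_{2})$, the spans $U_{i} = \Span\{l_{H}: H\in S_{i}\}$ meet trivially, so every relation splits into a relation within $S_{1}$ and a relation $\sum_{H\in S_{2}}a_{H}l_{H}=0$ within $S_{2}$. Choosing the $c_{H}$ for $H \in \calA_{2}$ to be the restriction of a single functional, $c_{H} = \phi(l_{H})$ for a fixed linear $\phi$ on $(\CC^{n+1})^{*}$, kills these. However, this choice makes $\widetilde{H} = \{l_{H} + \phi(l_{H})x_{n+1}=0\}$, which is a global coordinate change for $\calA_{2}$ and gives $r(\widetilde{S}) = r(S)$ always. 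That is too degenerate, so the choice needs refining.

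The refined choice is to let $c$ be a generic element of the space of maps $\calA_{2} \to \CC$ vanishing on all relations internal to $\calA_{2}$, namely $c = \phi\circ l$ for generic $\phi$. Then the forms of $\widetilde{\calA_{2}}$ are $l_{H} + \phi(l_{H})x_{n+1}$, still linear images of $l_{H}$ under the injective map $\psi = (\mathrm{id},\phi)$. The lifted span is therefore $U_{1}\oplus 0 + \psi(U_{2})$, and its dimension is
\begin{equation*}
\dim U_{1} + \dim U_{2} - \dim\{u\in U_{1}\cap U_{2} : \phi(u) = 0\}.
\end{equation*}
For generic $\phi$ (nonzero on each of the finitely many nonzero subspaces $U_{1}\cap U_{2}$, ranging over all $S$), the subtracted term equals $\max(\dim(U_{1}\cap U_{2})-1,0)$. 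Hence $r(\widetilde{S}) = r(S)$ iff $U_{1}\cap U_{2}=0$, i.e.\ $r(S) = r(S_{1})+r(S_{2})$; otherwise $r(\widetilde{S}) = r(S)+1$.

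The main obstacle is exactly this tension. A naive generic choice of the $c_{H}$ breaks rank equality even inside $\calA_{2}$, while a coordinate-change choice never raises the rank. The fix is to lift $\calA_{1}$ and $\calA_{2}$ through two different linear embeddings, $\mathrm{id}\oplus 0$ and $(\mathrm{id},\phi)$, and the remaining work is the genericity count over the finitely many subsets $S$.
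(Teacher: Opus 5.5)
Your final construction is exactly the paper's: the paper lifts $H\in\calA_2$ by $V(\widetilde H)=V(H)+\CC\cdot(z_0,\dots,z_n,1)$, whose defining form is $l_H-l_H(z)\,x_{n+1}$, i.e.\ your choice with $\phi=-\mathrm{ev}_z$. Its genericity condition $z\notin V(S_1)+V(S_2)=(U_1\cap U_2)^{\perp}$ is precisely your requirement that $\phi$ not vanish on $U_1\cap U_2\neq 0$, and your dual Grassmann count stands in for the paper's count on cones, so the proof is correct and essentially the same. One slip: your intermediate claim that $c_H=\phi(l_H)$ ``gives $r(\widetilde S)=r(S)$ always'' is false, since $\calA_1$ is lifted with last coordinate $0$ and only special choices such as $\phi=0$ degenerate; your ``refined choice'' is the same construction with $\phi$ taken generic.
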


\begin{proof}
For simplicity, we denote by $ S_{i} = S \cap \calA_{i} $ for any subset $ S \subseteq \calA $. Choose $ (z_{0},\cdots,z_{n}) \in \CC^{n+1} $ such that $ (z_{0},\cdots,z_{n}) \notin V(S_{1}) + V(S_{2}) $ for any subset $ S \subseteq \calA $ with $ V(S_{1}) + V(S_{2}) \neq \CC^{n+1} $. Take
\begin{equation*}
	V(\widetilde{H}) = \begin{cases}
		V(H) + \CC \cdot (0,\cdots,0,1)  & H \in \calA_{1} \\
		V(H) + \CC \cdot (z_{0},\cdots,z_{n},1)  & H \in \calA_{2}
	\end{cases}
\end{equation*}

Note that $ V(\widetilde{H}) \cap \{x_{n+1} = 0\} = V(H) $ for any $ H \in \calA $. So the arrangement $ \widetilde{A} = \{\widetilde{H} \mid H \in \calA\} $ is a lifting of $ \calA $. We then verify that $ \widetilde{A} $ meets the requirement. 

By the choice of $ (z_{0},\cdots,z_{n}) $, we have
\begin{equation*}
	\dim  (V(\widetilde{S_{1}}) + V(\widetilde{S_{2}})) - \dim (V(S_{1}) + V(S_{2})) = \begin{cases}
		 1 & V(S_{1}) + V(S_{2}) = \CC^{n+1} \\
		 2 & V(S_{1}) + V(S_{2}) \neq \CC^{n+1}
	\end{cases}
\end{equation*}

Note that
\begin{eqnarray*}
	r(\widetilde{S}) + n+2 - \dim  (V(\widetilde{S_{1}}) + V(\widetilde{S_{2}})) & = & r(\widetilde{S_{1}}) +r(\widetilde{S_{2}}) \\
	& = & r(S_{1}) + r(S_{2}) \\
	& = & r(S) + n+1 - \dim (V(S_{1}) + V(S_{2})).
\end{eqnarray*}

 So we have
\begin{equation*}
	r(\widetilde{S}) = r(S)\iff \CC^{n+1} = V(S_{1}) + V(S_{2}) \iff r(S) = r(S_{1}) + r(S_{2}).
\end{equation*}
\end{proof}

Using the liftings constructed in Lemma \ref{lem: lifting given by partition}, we can prove the following proposition.

\begin{prop}\label{prop: partition criterion for n-vanishing}
		Let $ \calA $ be a hyperplane arrangement in $ \PP^{n} $. Let $ \LL $ be a complex rank-one local system on $ M(\calA) $ with monodromy map $ m \colon \calA \rightarrow \CC^{\times} $. If there exists a bipartition $ \calA = \calA_{1} \sqcup \calA_{2} $ such that
		\begin{enumerate}
			\item $ \prod_{H \in \calA_{1}} m(H) \neq 1 $, and
			\item $ r(S \cap \calA_{1}) + r(S \cap \calA_{2}) = r(S) $ for every subset $ S \subseteq \calA $ of rank $ \leq n $,
		\end{enumerate}
		then $ \calA $ is $ n $-vanishing.
\end{prop}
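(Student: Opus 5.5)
The plan is to pass to the lifting of Lemma \ref{lem: lifting given by partition} and compare cohomology via Lemma \ref{lem: dimension shift for lifting}.

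Fix the bipartition $\calA = \calA_{1}\sqcup\calA_{2}$ and let $\widetilde{\calA}\subseteq\PP^{n+1}$ be the lifting of Lemma \ref{lem: lifting given by partition}. Give it the extended local system $m(\widetilde{H}) = m(H)$. By hypothesis (2), every subset $S\subseteq\calA$ with $r(S)\le n$ satisfies $r(S) = r(S\cap\calA_{1})+r(S\cap\calA_{2})$, and hence $r(\widetilde{S}) = r(S)$.

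\textbf{Step 1: the hypothesis of Lemma \ref{lem: dimension shift for lifting} is vacuous.} Take a nonempty flat $\calF$ of rank $\le n$. Each irreducible component $\calG$ of $\calF$ has $r(\calG)\le n$, so $r(\widetilde{\calG}) = r(\calG)\neq r(\calG)+1$. Thus no flat satisfies condition (\ref{eqn: condition for nonresonant lifting}). Applying the lemma with $k = n+1$ shows that $H^{p}(M(\widetilde{\calA}),\LL)\to H^{p}(M(\calA),\LL)$ is an isomorphism for $p\le n-1$. It therefore suffices to show $H^{p}(M(\widetilde{\calA}),\LL)=0$ for $p\le n-1$, i.e.\ that $\widetilde{\calA}$ is $n$-vanishing in $\PP^{n+1}$. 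This is one degree weaker than nonresonance of $\widetilde{\calA}$.

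\textbf{Step 2: locate the resonant flats of $\widetilde{\calA}$.} Let $\widetilde{\calF}=\widetilde{S}$ be an irreducible flat with $r(\widetilde{S})\le n+1$, and write $S_{i} = S\cap\calA_{i}$. There are two cases.
\begin{itemize}
\item If $r(S)\le n$, then $r(\widetilde{S}) = r(S_{1})+r(S_{2})\ge r(\widetilde{S_{1}})+r(\widetilde{S_{2}})\ge r(\widetilde{S})$. So $\widetilde{S}$ splits as $\widetilde{S_{1}}\sqcup\widetilde{S_{2}}$, and irreducibility forces $S\subseteq\calA_{1}$ or $S\subseteq\calA_{2}$.
\item If $r(S) = n+1$, then either $S$ is split, and the same argument applies, or $r(\widetilde{S}) = n+2$, which is excluded.
\end{itemize}
Hence every resonant flat of $\widetilde{\calA}$ lies entirely in $\widetilde{\calA_{1}}$ or entirely in $\widetilde{\calA_{2}}$. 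Geometrically, the hyperplanes of $\widetilde{\calA_{1}}$ all pass through $e=(0{:}\cdots{:}0{:}1)$ and those of $\widetilde{\calA_{2}}$ all pass through $(z_{0}{:}\cdots{:}z_{n}{:}1)$.

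\textbf{Step 3: vanishing for $\widetilde{\calA}$, the main obstacle.} The plan is to apply Corollary \ref{cor: k-vanishing given by R(H)} to $\widetilde{\calA}$ with $k=n$. I choose $H\in\calA_{1}$ with $m(H)\neq1$, which exists by (1). Then for each resonant flat $\widetilde{\calF}\ni\widetilde{H}$, necessarily $\widetilde{\calF}\subseteq\widetilde{\calA_{1}}$, and I must show that $\widetilde{\calA}^{\widetilde{\calF}}$ is $(n+1-r(\widetilde{\calF}))$-vanishing. The restriction to $Z(\widetilde{\calF})$ contains the images of all of $\widetilde{\calA_{2}}$. Its resonant flats should again split between the two families, by the same rank computation inherited from (2). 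Moreover, the product of the monodromies over the $\widetilde{\calA_{2}}$-part equals $m(\calA_{2}) = m(\calA_{1})^{-1}\neq 1$. So I expect to handle the restriction by induction on $n$, using Proposition \ref{prop: k-vanishing given by higher ranked resonant flats} or the proposition itself applied to the restricted arrangement with the induced bipartition.

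The delicate point is checking that hypothesis (2) descends to $\widetilde{\calA}^{\widetilde{\calF}}$ with the induced bipartition, so that the induction closes. If it does not, my fallback is Theorem \ref{main: n-vanishing} or Lemma \ref{lem: vanishing lemma} on $\widetilde{\calA}$ directly. There I would try $\delta$ equal to a positive constant on $\widetilde{\calA_{1}}$ and a negative constant on $\widetilde{\calA_{2}}$, perturbed so that the flats inside $\widetilde{\calA_{2}}$ also receive positive sums. This is where hypothesis (1), $m(\calA_{1})\neq1$, must be used to exclude $\widetilde{\calA_{1}}$ and $\widetilde{\calA_{2}}$ themselves from being resonant.
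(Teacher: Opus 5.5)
Your skeleton is the paper's: the lifting of Lemma~\ref{lem: lifting given by partition}, the vacuous use of Lemma~\ref{lem: dimension shift for lifting} with $k=n+1$, Corollary~\ref{cor: k-vanishing given by R(H)} with $k=n$ at $\widetilde{H_1}$ for some $H_1\in\calA_1$ with $m(H_1)\neq1$, and the fact that every resonant flat $\widetilde S\ni\widetilde{H_1}$ has $S\subseteq\calA_1$. But Step~3 carries the whole proof and you only announce it, so there is a gap.

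What must be shown concerns such $\widetilde S$ with $2\le r(\widetilde S)\le n$. The lower bound holds since $m(H_1)\neq1$, and $r(\widetilde S)=n+1$ is vacuous. For these, $(\widetilde\calA)^{\widetilde S}$ must satisfy the hypotheses of the proposition itself in dimension $n+1-r(\widetilde S)\le n-1$, so that induction on $n$ (base case $n=1$) closes. Two verifications are missing.

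\emph{Disjointness.} For each $H_2\in\calA_2$, the set $\widetilde S\cup\{\widetilde{H_2}\}$ must be a flat. Otherwise some $\widetilde H$ with $H\notin S\cup\{H_2\}$ contains $\widetilde{H_2}\cap Z(\widetilde S)$. Then $\widetilde S\cup\{\widetilde H,\widetilde{H_2}\}$ has rank $r(\widetilde S)+1\le n+1$, hence splits along the bipartition. Yet its split rank is $r(\widetilde S)+2$ whether $H\in\calA_1$ or $H\in\calA_2$. This is what makes $(\widetilde\calA)^{\widetilde S}$ the \emph{disjoint} union of the images of $\calA_1\setminus S$ and of $\calA_2$. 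Only then does an induced bipartition exist and your monodromy count make sense: the product over the first part is $m(\calA_1)/m(\widetilde S)=m(\calA_1)\neq1$.

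\emph{Descent of hypothesis (2).} For $P\subseteq\calA\setminus S$, the image of $P$ in $Z(\widetilde S)$ has rank $r(\widetilde P\cup\widetilde S)-r(\widetilde S)$. If this is at most $n+1-r(\widetilde S)$, then $r(\widetilde P\cup\widetilde S)\le n+1$. So $P\cup S$ and all its subsets keep their rank under lifting and split. Since $S\subseteq\calA_1$, the image of $P$ then has rank $\bigl(r((P\cap\calA_1)\cup S)-r(S)\bigr)+r(P\cap\calA_2)$. This is exactly the sum of the ranks of the images of $P\cap\calA_1$ and $P\cap\calA_2$.

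Your fallback does not rescue this. Lemma~\ref{lem: vanishing lemma} (equivalently Theorem~\ref{main: n-vanishing}) applied to $\widetilde\calA$ would give $(n+1)$-vanishing, more than you need. Moreover, a suitable $\delta$ cannot exist if some nonzero $\lambda\colon\RF(\widetilde\calA,\LL)\to\NN$ has constant coverage $c$, since then $\sum_{\calF}\lambda(\calF)\sum_{H\in\calF}\delta(H)=c\sum_{H}\delta(H)=0$.

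This already happens for $n=2$. Let $\calA_1,\calA_2$ be two complete quadrilaterals in general position with respect to each other, each with lines $H_{ij}$ ($1\le i<j\le4$) and triple points $H_{ij}\cap H_{ik}\cap H_{jk}$. In each, put $m=2,-2,3,-3,-\tfrac16,\tfrac16$ on $H_{12},H_{34},H_{13},H_{24},H_{14},H_{23}$. Then:
\begin{itemize}
\item all eight triple points are resonant and $m(\calA_1)=m(\calA_2)=-1$, so hypotheses (1) and (2) hold;
\item the resonant flats of $\widetilde\calA$ are the eight rank-$2$ flats over these triple points;
\item each plane of $\widetilde\calA$ lies in exactly two of them, so $\lambda\equiv1$ has constant coverage.
\end{itemize}
The same $\lambda$ defeats Theorem~\ref{main: n-vanishing} for $\calA$ itself. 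That is why the inductive restriction argument is essential.
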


\begin{proof}
	We prove by induction on the dimension $ n $. When $ n = 1 $, the conclusion holds since $ \LL $ is non-constant. 
	 
	 Suppose the conclusion holds when dimension $ <n $. For the case of dimension $ n $, by Lemma \ref{lem: lifting given by partition} and condition (2) in the proposition, there exists a lifting $ \widetilde{\calA} $ of $ \calA $ such that for any subset $ S \subseteq \calA $,
	\begin{equation*}
		  r(S) = r(S \cap \calA_{1}) + r(S \cap \calA_{2})  \iff r(\widetilde{S}) = r(S) \iff r(\widetilde{S}) \leq n+1.
	\end{equation*}
	
	By condition (1) in the proposition, there exists $ H_{1} \in \calA_{1} $ such that $ m(H_{1}) \neq 1 $. We then claim that for any subset $ S \subseteq \calA $ such that $ H_{1} \in S $ and $ \widetilde{S} \in \RF(\widetilde{\calA},\LL) $,  $ (\widetilde{\calA})^{\widetilde{S}} $ is $ (n+1-r(\widetilde{S})) $-vanishing. 
		
	Since $ r(\widetilde{S}) \leq n+1 $, we have $ r(\widetilde{S}) =  r(S) = r(S \cap \calA_{1}) + r(S \cap \calA_{2})  = r(\widetilde{S} \cap \widetilde{\calA_{1}}) + r(\widetilde{S} \cap \widetilde{\calA_{2}}) $. Note that $ \widetilde{H_{1}} \in \widetilde{S} $ and $ \widetilde{S} $ is irreducible. So $ \widetilde{S} \subseteq \widetilde{\calA_{1}} $, or equivalently, $ S \subseteq \calA_{1} $.

 The claim obviously holds when $ r(\widetilde{S}) = n+1 $. So we may assume that $ r(\widetilde{S}) \leq n $. By definition, 
	\begin{equation*}
		(\widetilde{\calA})^{\widetilde{S}} = \{\widetilde{H} \cap Z(\widetilde{S}) \mid H \in \calA_{1} \setminus S\} \cup  \{\widetilde{H} \cap Z(\widetilde{S}) \mid H \in \calA_{2}\}.
	\end{equation*}
	We then prove this is a disjoint union. It suffices to prove that $ \widetilde{S} \cup \{\widetilde{H_{2}}\} \in L(\widetilde{\calA}) $ for any $ H_{2} \in \calA_{2} $. Otherwise, there exists $ H \in \calA \setminus (S \cup \{H_{2}\}) $ such that $ \widetilde{H} \supseteq \widetilde{H_{2}} \cap Z(\widetilde{S}) $. So $ r(\widetilde{S} \cup \{\widetilde{H},\widetilde{H_{2}}\}) = r(\widetilde{S}) + 1  \leq n+1 $, which implies that
	\begin{equation*}
	 r(\widetilde{S}) + 1 = r(\widetilde{S} \cup \{\widetilde{H},\widetilde{H_{2}}\}) = 	r(\widetilde{S} \cup (\{\widetilde{H}\} \cap \widetilde{\calA_{1}})) + r( \widetilde{H_{2}} \cup (\{\widetilde{H}\} \cap \widetilde{\calA_{2}})).
	\end{equation*}

	However, whenever $ H \in \calA_{1} $ or $ H \in \calA_{2} $, we always have
	\begin{equation*}
		 r(\widetilde{S} \cup (\{\widetilde{H}\} \cap \widetilde{\calA_{1}})) + r( \widetilde{H_{2}} \cup (\{\widetilde{H}\} \cap \widetilde{\calA_{2}})) = r(\widetilde{S})+2,
	\end{equation*}
	a contradiction.
	
	Since $ m(\widetilde{S}) = 1 $, the product of $ m(H') $ over all $ H' \in \{\widetilde{H} \cap Z(\widetilde{S}) \mid H \in \calA_{1} \setminus S \} $ is
	\begin{equation*}
		\prod_{H \in \calA_{1} \setminus S}m(H) = m(\widetilde{S})^{-1} \cdot \prod_{H \in \calA_{1}} m(H) \neq 1. 
	\end{equation*}
	
	 Furthermore, every subset of $ (\widetilde{\calA})^{\widetilde{S}} $ can be written as $ \{\widetilde{H} \cap Z(\widetilde{S}) \mid H \in P \} $ for some $ P \subseteq \calA $ with $ P \cap S = \emptyset $. Its rank is exactly 
	  \begin{equation*}
	  	r(\{\widetilde{H} \cap Z(\widetilde{S}) \mid H \in P \}) = \dim Z(\widetilde{S}) - \dim Z(\widetilde{P}\cup \widetilde{S}) = r(\widetilde{P} \cup \widetilde{S}) - r(\widetilde{S}).
	  \end{equation*}
  
	  So when $ r(\{\widetilde{H} \cap Z(\widetilde{S}) \mid H \in P \}) \leq n+1-r(\widetilde{S}) $, we have $ r(\widetilde{P} \cup \widetilde{S}) \leq n+1 $, and 
	   \begin{equation*}
	  	r(\{\widetilde{H} \cap Z(\widetilde{S}) \mid H \in P \})  = r(P \cup S) - r(S) = r((P \cap \calA_{1})\cup S) + r(P\cap \calA_{2}) - r(S).
	  \end{equation*}
	  In particular,
	  \begin{eqnarray*}
	  		r(\{\widetilde{H} \cap Z(\widetilde{S}) \mid H \in P \cap \calA_{1} \}) + r(\{\widetilde{H} \cap Z(\widetilde{S}) \mid H \in P \cap \calA_{2}\}) &  = & r((P \cap \calA_{1})\cup S) - r(S) + r(P\cap \calA_{2}) \\
	  		& = & r(\{\widetilde{H} \cap Z(\widetilde{S}) \mid H \in P \}).
	  \end{eqnarray*}

	Note that $ r(\widetilde{S}) \geq 2 $ since $ H_{1} \in S $ and $ m(H_{1}) \neq 1 $. So $ n+1-r(\widetilde{S}) \leq n-1 $. By the induction hypothesis, we have $ (\widetilde{\calA})^{\widetilde{S}} $ is $ (n+1-r(\widetilde{S})) $-vanishing. So the claim holds. Then by Corollary \ref{cor: k-vanishing given by R(H)}, $ \widetilde{\calA} $ is $ n $-vanishing. 
	
	Note that for any flat $ \calF \in L(\calA) $ of rank $ \leq n $, we have $ r(\widetilde{\calF}) \leq n+1 $, which implies that $ r(\widetilde{\calF}) = r(\calF) $. So no nonempty flat of $ \calA $ satisfies the condition (\ref{eqn: condition for nonresonant lifting}). Hence by Lemma \ref{lem: dimension shift for lifting}, we have $ \calA $ is $ n $-vanishing. Then the conclusion holds by induction.
\end{proof}

\begin{rmk}
	In checking condition (2) in Proposition \ref{prop: partition criterion for n-vanishing}, it suffices to consider only flats $ \calF \in L(\calA) $ of rank $ \leq n $, rather than all subsets. Moreover, condition (2) holds automatically when $ \calA_{2} $ is in general position with respect to $ \calA_{1} $.
\end{rmk}

For line arrangements, a stronger conclusion holds, namely Theorem \ref{main: 2-vanishing}.

\begin{thm}[Theorem \ref{main: 2-vanishing}]
		Let $ \calA $ be a line arrangement in $ \PP^{2} $ and $ \LL $ a complex rank-one local system on $ M(\calA) $ with monodromy map $ m \colon \calA \rightarrow \CC^{\times} $. If there exists a bipartition $ \calA = \calA_{1} \sqcup \calA_{2} $ such that
	\begin{enumerate}
		\item for $ i=1,2 $, there exists $ H_{i} \in \calA_{i} $ with $ m(H_{i}) \neq 1 $, and
		\item for any resonant flat $ \calF \in \RF(\calA,\LL) $, there exists some $ i $ such that $ \calF \subseteq \calA_{i} $,
	\end{enumerate}
	then the pair $ (\calA,\LL) $ is nonresonant.
\end{thm}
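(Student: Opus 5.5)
We have to show $H^{0}(M(\calA),\LL)=H^{1}(M(\calA),\LL)=0$. Artin vanishing handles degrees $>2$, and $H^{0}=0$ because $m(H_{1})\neq 1$. So the work is $H^{1}$. The plan is to lift $\calA$ to $\PP^{3}$ along the bipartition, show that the lift has the same $H^{1}$, and then prove $H^{1}=0$ for the lift using Corollary \ref{cor: k-vanishing given by R(H)}.

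\emph{Step 1 (lifting).} Let $\widetilde{\calA}$ be the lifting from Lemma \ref{lem: lifting given by partition}. For every $S\subseteq\calA$ it satisfies $r(\widetilde S)=r(S)$ iff $r(S)=r(S\cap\calA_{1})+r(S\cap\calA_{2})$. Apply Lemma \ref{lem: dimension shift for lifting} with $n=2$ and $k=3$. This gives $H^{p}(M(\widetilde\calA),\LL)\simeq H^{p}(M(\calA),\LL)$ for $p\le 1$, provided every nonempty flat $\calF$ satisfying (\ref{eqn: condition for nonresonant lifting}) has $\calA^{\calF}$ $(3-r(\calF))$-vanishing. I expect no flat satisfies that condition, by the following case check.
\begin{itemize}
\item A rank-one flat $\{H\}$ has $r(\widetilde{\{H\}})=1$, so it fails.
\item A point of multiplicity $2$ is reducible, and its components $\{H\}$ fail as above.
\item A point of multiplicity $\ge 3$ with $m(\calF)=1$ is a resonant point. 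By hypothesis (2) it lies in one $\calA_{i}$, so the rank splits and $r(\widetilde\calF)=r(\calF)=2$. Hence it fails the requirement $r(\widetilde\calF)=r(\calF)+1$.
\end{itemize}
So the hypothesis of the lemma is vacuous, and it suffices to show $H^{1}(M(\widetilde\calA),\LL)=0$, i.e.\ that $\widetilde\calA$ is $2$-vanishing.

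\emph{Step 2 (vanishing on the lift).} Apply Corollary \ref{cor: k-vanishing given by R(H)} to $\widetilde\calA\subset\PP^{3}$ with $k=2$ and the hyperplane $\widetilde{H_{1}}$. We must check that $(\widetilde\calA)^{\widetilde\calF}$ is $(3-r(\widetilde\calF))$-vanishing for every resonant flat $\widetilde\calF\ni\widetilde{H_{1}}$.
\begin{itemize}
\item Rank $1$ cannot occur, since $m(H_{1})\ne1$.
\item Rank $3$ needs only $0$-vanishing, which is automatic.
\item Rank $2$: an irreducible rank-two flat $\widetilde S$ has $r(\widetilde S)=r(S)$, hence $r(S)=2$ and the rank splits. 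A split with one line from each side gives a reducible flat. So $S\subseteq\calA_{1}$ and $S$ is a resonant point of $\calA$.
\end{itemize}
For such $S$ we need $(\widetilde\calA)^{\widetilde S}$, an arrangement of points on the line $Z(\widetilde S)\simeq\PP^{1}$, to be $1$-vanishing. That is, the restricted local system must have some point with nontrivial monodromy.

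\emph{Step 3 (main obstacle).} I will show that the point $q=\widetilde{H_{2}}\cap Z(\widetilde S)$ has monodromy $m(H_{2})\neq1$. The claim is that the flat of $\widetilde\calA$ generated by $\widetilde S\cup\{\widetilde{H_{2}}\}$ is exactly $\widetilde S\sqcup\{\widetilde{H_{2}}\}$. Then the monodromy of $q$ in the restriction is $m(\widetilde S)\,m(H_{2})/m(\widetilde S)=m(H_{2})$. I would prove the claim as in the proof of Proposition \ref{prop: partition criterion for n-vanishing}. Suppose some other $\widetilde H$ contains $q$. Then $T=S\cup\{H_{2},H\}$ has $r(\widetilde T)=3$, and I would compare this with the generic-choice dimension count from the proof of Lemma \ref{lem: lifting given by partition}. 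The rank sum $r(T\cap\calA_{1})+r(T\cap\calA_{2})$ is $\ge 3$, and $\ge 4$ when $H\in\calA_{2}$. Genericity of $(z_{0},\dots,z_{n})$ should then force $r(\widetilde T)=4$, a contradiction. This dimension bookkeeping is the delicate part, especially when $H\in\calA_{1}$ passes through an intersection point of $H_{2}$ with other lines. If it fails, I would fall back to showing directly that the restricted local system on $\PP^{1}$ minus points is nontrivial, using $\prod_{H\in\calA_{1}\setminus S}m(H)$ versus $m(H_{2})$.
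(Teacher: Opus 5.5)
Steps 1 and 2 follow the paper's proof, but the last sentence of Step 2 is wrong, and this breaks Step 3. Irreducibility of $\widetilde S$ only shows that $S$ is the set of \emph{all lines of $\calA_1$} through $Z(S)$. Lines of $\calA_2$ through $Z(S)$ never lift to planes containing $Z(\widetilde S)$, because together with $S$ their rank does not split. So $\widetilde S$ can be a resonant flat of $\widetilde\calA$ while $S$ is not a flat of $\calA$.

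If $H_2$ and some other $H'\in\calA_2$ both pass through $Z(S)$, your Step 3 claim is false: $q=Z(S)\in H_0$ then also lies on $\widetilde{H'}$. In your rank count, $T=S\cup\{H_2,H'\}$ has $r(T)=2$. So Lemma \ref{lem: lifting given by partition} only gives $r(\widetilde T)=r(T)+1=3$, and genericity yields no contradiction. (The case you flagged, $H'\in\calA_1\setminus S$, is harmless: $Z(S)\notin H'$, so $r(S\cup\{H'\})=3$ and the split rank is $4>r(T)$.)

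Here is a concrete instance satisfying both hypotheses. Let $\calA_1$ be three lines through a point $p$ with $\prod_{L\in\calA_1}m(L)=1$ and $m(H_1)\neq 1$. Let $\calA_2=\{H_2,H',K\}$ with $H_2,H'\ni p$, $K$ generic, $m(H_2)\neq 1$ and $m(H_2)m(H')\neq 1$. Then $\widetilde{\calA_1}$ is a resonant flat of $\widetilde\calA$ containing $\widetilde{H_1}$. The point $q=p$ of $(\widetilde\calA)^{\widetilde{\calA_1}}$ carries monodromy $m(H_2)m(H')$, not $m(H_2)$. Your fallback product over $\calA_1\setminus S$ is empty here, so it cannot rescue the argument.

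What is missing is a second use of hypothesis (2), which your Steps 2--3 never invoke. Split into cases as the paper does.

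If $S\in L(\calA)$, then $H_2\notin S$ forces $Z(S)\notin H_2$, so $r(T)=3$ for every candidate $T$. Your rank bookkeeping then does show that the flat generated by $\widetilde S\cup\{\widetilde{H_2}\}$ is $\widetilde S\sqcup\{\widetilde{H_2}\}$, with monodromy $m(H_2)\neq 1$.

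If $S\notin L(\calA)$, take instead $P=\{H\in\calA\mid Z(S)\in H\}$. Then $\widetilde P\supsetneq\widetilde S$ is a flat of $\widetilde\calA$ of rank $3=r(P)+1$, so $r(P)\neq r(P\cap\calA_1)+r(P\cap\calA_2)$. Hence $P$ is an irreducible point meeting both parts, and hypothesis (2) gives $m(P)\neq 1$. The point $Z(\widetilde P)$ of $(\widetilde\calA)^{\widetilde S}$ then has monodromy $m(P)m(S)^{-1}=m(P)\neq 1$, which gives the required $1$-vanishing. With this repair your argument becomes the paper's.
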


\begin{proof}
	By Lemma \ref{lem: lifting given by partition}, there exists a lifting $ \widetilde{\calA} $ such that for any subset $ S \subseteq \calA $,
	\begin{equation*}
		r(\widetilde{S}) = r(S) \iff r(S) = r(S \cap \calA_{1}) + r(S \cap \calA_{2}).
	\end{equation*}	
	
	Recall that each $ \calA_{i} $ contains a hyperplane $ H_{i} $ such that $ m(H_{i}) \neq 1 $. We then claim that for any subset $ S \subseteq \calA $ such that $ H_{1} \in S $ and $ \widetilde{S} \in \RF(\widetilde{\calA},\LL) $,  $ (\widetilde{\calA})^{\widetilde{S}} $ is $ (3-r(\widetilde{S})) $-vanishing. 
	
	Since $ m(H_{1}) \neq 1 $, we have $ r(\widetilde{S}) \geq r(S) \geq 2 $. The claim obviously holds when $ r(\widetilde{S}) = 3 $. So we may assume that $ r(\widetilde{S}) = r(S) = 2 $. In this case, the claim is equivalent to the statement that there exists $ P \supset S $ such that $ \widetilde{P} \in L(\widetilde{\calA}) $, $ r(\widetilde{P}) = 3 $ and $ m(\widetilde{P}) \neq 1 $. 
	
	Since $ r(\widetilde{S}) = r(S) $, we have $ r(\widetilde{S}) =  r(S) = r(S \cap \calA_{1}) + r(S \cap \calA_{2})  = r(\widetilde{S} \cap \widetilde{\calA_{1}}) + r(\widetilde{S} \cap \widetilde{\calA_{2}}) $. Note that $ \widetilde{H_{1}} \in \widetilde{S} $ and $ \widetilde{S} $ is irreducible. So $ \widetilde{S} \subseteq \widetilde{\calA_{1}} $, or equivalently, $ S \subseteq \calA_{1} $. In particular, $ H_{2} \notin S $.
	
	When $ S \in L(\calA) $, let $ P = S \cup \{H_{2}\} $. Then $ r(P) = 3 = r(S) + r(\{H_{2}\}) $. So $ r(\widetilde{P}) = r(P) = 3 $. Furthermore, if there exists $ H \in \calA \setminus P $ such that $ \widetilde{H} \supseteq Z(\widetilde{P}) $, then $ r(\widetilde{P} \cup \{\widetilde{H}\}) = 3 = r(P \cup \{H\}) $, which implies that
	\begin{equation*}
		r(S \cup (\{H\} \cap \calA_{1})) + r(\{H_{2}\} \cup \{H\} \cap \calA_{2})) = r(P \cup \{H\}) = 3.
	\end{equation*}
	
	However, whenever $ H \in \calA_{1} $ or $ H \in \calA_{2} $, we always have
	\begin{equation*}
		r(S \cup (\{H\} \cap \calA_{1})) + r(\{H_{2}\} \cup \{H\} \cap \calA_{2})) = 4,
	\end{equation*}
	a contradiction. So $ \widetilde{P} \in L(\widetilde{\calA}) $. Moreover, $ m(\widetilde{P}) = m(H_{2}) \neq 1 $.
	
	When $ S \notin L(\calA) $, let $ P = \{H \in \calA \mid Z(S) \subseteq H\} $. Then $ \widetilde{P} \supsetneq \widetilde{S} $ is a flat of $ \widetilde{\calA} $. So $ r(\widetilde{P}) > r(\widetilde{S}) = r(S) = r(P) = 2 $. Hence $ r(\widetilde{P}) = 3 $ and $ r(P) \neq r(P \cap \calA_{1}) + r(P \cap \calA_{2}) $, which implies that $ P $ is irreducible and $ P \notin L(\calA_{1}) \cup L(\calA_{2}) $. Then by condition (2) in the theorem, we have $ m(\widetilde{P}) = m(P) \neq 1 $.

	So the claim holds in both cases. Then by Corollary \ref{cor: k-vanishing given by R(H)}, we have $ \widetilde{\calA} $ is $ 2 $-vanishing. 
	
	Let $ \calF \in L(\calA) $ be a flat of rank $ \leq 2 $ with $ r(\widetilde{\calF}) = r(\calF)+1 $. Then $ r(\calF) \neq r(\calF \cap \calA_{1}) + r(\calF \cap \calA_{2}) $. Hence $ \calF $ is irreducible and $ \calF \notin L(\calA_{1}) \cup L(\calA_{2}) $. By by condition (2) in the theorem, we have $ m(\calF) \neq 1 $. So no nonempty flat of $ \calA $ satisfies the condition (\ref{eqn: condition for nonresonant lifting}). Then by Lemma \ref{lem: dimension shift for lifting}, since $ \widetilde{\calA} $ is $ 2 $-vanishing, we have $ \calA $ is $ 2 $-vanishing.
\end{proof}

Translated into graph-theoretic language, Theorem \ref{main: 2-vanishing} directly implies the following criterion.

\begin{cor}
	Let $ \calA $ be a line arrangement in $ \PP^{2} $. Let $ \LL $ be a complex rank-one local system on the complement $ M(\calA) $ with monodromy map $ m \colon \calA \rightarrow \CC^{\times} $ such that $ m(H) \neq 1 $ for all lines $ H $ in $ \calA $. Let $ \Gamma(\calA,\LL) $ be the graph with vertex set $ \calA $ where two distinct vertices $ H,H' $ are adjacent if and only if $ H \cap H' $ is not a resonant point. If $ \Gamma(\calA,\LL) $ contains a spanning complete bipartite subgraph, then the pair $ (\calA,\LL) $ is nonresonant.
\end{cor}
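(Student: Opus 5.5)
The plan is to reduce the corollary directly to Theorem \ref{main: 2-vanishing} by extracting a bipartition of $\calA$ from the spanning complete bipartite subgraph. Suppose $\Gamma(\calA,\LL)$ contains a spanning complete bipartite subgraph $K$. Its vertex set is all of $\calA$, so it comes with a partition $\calA = \calA_{1} \sqcup \calA_{2}$ into its two sides. Every $H \in \calA_{1}$ is adjacent in $K$, hence in $\Gamma(\calA,\LL)$, to every $H' \in \calA_{2}$. I will read the definition of a complete bipartite subgraph as requiring both sides to be nonempty; otherwise it has no edges, and the statement would be vacuous or false, for instance for a pencil with all intersections resonant.

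The first step is to check hypothesis (1) of Theorem \ref{main: 2-vanishing}. Since both sides are nonempty, pick any $H_{1} \in \calA_{1}$ and any $H_{2} \in \calA_{2}$. By the standing assumption that $m(H) \neq 1$ for every line, we get $m(H_{i}) \neq 1$.

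The second step is to check hypothesis (2), in the form stated in the version of the theorem proved above: every resonant flat lies inside some $\calA_{i}$. Since $m(H) \neq 1$ for all $H$, no rank-one flat is resonant. The flat $\calA$ itself has rank $3 > 2$, so it is not resonant either (and $\calA = \emptyset$ is excluded since $\calA_1,\calA_2$ are nonempty). Hence every resonant flat $\calF$ has rank $2$, and it is exactly the set of lines through the resonant point $Z(\calF)$. If such an $\calF$ met both $\calA_{1}$ and $\calA_{2}$, we could choose $H \in \calF \cap \calA_{1}$ and $H' \in \calF \cap \calA_{2}$. Then $H \cap H' = Z(\calF)$ would be a resonant point, contradicting the adjacency of $H$ and $H'$ in $\Gamma(\calA,\LL)$. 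Therefore $\calF \subseteq \calA_{1}$ or $\calF \subseteq \calA_{2}$. Equivalently, condition (2) of the introduction's formulation holds: no point $H \cap H'$ with $H \in \calA_1$, $H' \in \calA_2$ is resonant.

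Theorem \ref{main: 2-vanishing} then gives that $(\calA,\LL)$ is nonresonant. The only point needing any care is the nonemptiness of both parts, which is precisely where hypothesis (1) of the theorem comes from; everything else is a direct translation of the definitions.
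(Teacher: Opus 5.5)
Your proposal is correct and is essentially the paper's argument, which just says the corollary follows directly from Theorem~\ref{main: 2-vanishing}: the two sides of the spanning complete bipartite subgraph give the bipartition, nonemptiness of both sides together with $m(H)\neq 1$ gives condition (1), and adjacency across the sides is literally condition (2). One harmless slip: $\calA$ need not have rank $3$ (e.g.\ two lines, or a pencil), but you never need this, since resonant flats have rank at most $2$ by definition.
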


	\bibliographystyle{plain}
	\bibliography{reference}
\end{document}